\newcommand{\boldN}{\ensuremath{\mathbf{N}}}
\newcommand{\boldNr}{\ensuremath{\widetilde{\mathbf{N}}}}
\newcommand{\bHs }{\ensuremath{\mathbf{H}_k(s_1,\ldots,s_k)}}
\newcommand{\Sys}{\ensuremath{\zeta}}
\newcommand{\boldA}{\ensuremath{\mathbf A}}
\newcommand{\boldAr}{\ensuremath{\widetilde{\mathbf{A}}}}
\newcommand{\boldb}{\ensuremath{\mathbf b}}
\newcommand{\boldbr}{\ensuremath{\widetilde{\mathbf {b}}}}
\newcommand{\boldB}{\ensuremath{\mathbf B}}
\newcommand{\boldC}{\ensuremath{\mathbf C}}
\newcommand{\boldCr}{\ensuremath{\widetilde{\mathbf {C}}}}
\newcommand{\boldBr}{\ensuremath{\widetilde{\mathbf {B}}}}
\newcommand{\boldc}{\ensuremath{\mathbf c}}
\newcommand{\boldcr}{\ensuremath{\widetilde{\mathbf{c}}}}
\newcommand{\boldW}{\ensuremath{\mathbf{W}}}
\newcommand{\boldWr}{\ensuremath{\widetilde{\mathbf{W}}}}
\newcommand{\boldV}{\ensuremath{\mathbf{V}}}
\newcommand{\boldVr}{\ensuremath{\widetilde{\mathbf{V}}}}
\newcommand{\reals}{\ensuremath{\mathbb{R}}}
\newcommand{\complex}{\ensuremath{\mathbb{C}}}
\newcommand{\ud}{\,\mathrm{d}}
\newcommand{\boldU}{\ensuremath{\mathbf{U}}}
\newcommand{\bolde}{\ensuremath{\mathbf{e}}}
\newcommand{\boldI}{\ensuremath{\mathbf{I}}}
\newcommand{\boldIr}{\ensuremath{\mathbf{I}_r}}
\newcommand{\cbfA}{\mbox{\boldmath${\EuScript{A}}$} }
\newcommand{\cbfI}{\mbox{\boldmath${\EuScript{I}}$} }
\newcommand{\cbfN}{\mbox{\boldmath${\EuScript{N}}$} }
\newcommand{\cbfM}{\mbox{\boldmath${\EuScript{M}}$} }
\newcommand{\cbfE}{\mbox{\boldmath${\EuScript{E}}$} }
\newcommand{\cbfAhat}{\widehat{\cbfA\,}}
\newcommand{\cbfAtilde}{\widetilde{\cbfA\,}}
\newcommand{\cbfNhat}{\widehat{\cbfN}}
\newcommand{\cbfNtilde}{\widetilde{\cbfN}}
\newtheorem{thm}{Theorem}[section]
\newtheorem{lem}{Lemma}[section]
\newtheorem{rem}{Remark}[section]
\newtheorem{dfn}{Definition}
\newtheorem{algorithm}{Algorithm}
\title{Multipoint Volterra Series Interpolation and $\mathcal{H}_2$ Optimal Model Reduction of Bilinear Systems\thanks{This work was
supported in part by the NSF Grants DMS-0645347 and DMS-1217156.}}
\author{Garret Flagg, Serkan Gugercin \\ 
{\small Department of Mathematics, Virginia Tech.} \\
{\small Blacksburg, VA, 24061-0123} \\
{\small \tt e-mail: \{flagg,gugercin\}@math.vt.edu}
}
\author{Garret Flagg and Serkan Gugercin% <-this % stops a space
\thanks{Garret Flagg is with Schlumberger/WesternGeco, Houston, TX, 77042, United States; \texttt{gflagg@slb.com}. Serkan Gugercin is with the Department of \mbox{Mathematics}, \mbox{Virginia Tech}, Blacksburg, VA, 24061-0123, USA;  {\texttt{gugercin@math.vt.edu}}.
}
}
\begin{document}
\maketitle

\begin{abstract}
In this paper, we focus on model reduction of large-scale bilinear systems. The main contributions  are threefold. First, we introduce a new framework for interpolatory model reduction of bilinear systems. In contrast to the
existing methods where interpolation is forced on some of the leading subsystem transfer functions, the new framework shows how to enforce multipoint interpolation of the underlying Volterra series.  Then, we show that the first-order conditions for optimal $\mathcal{H}_2$ model reduction of bilinear systems  require multivariate Hermite interpolation in terms of the new Volterra series interpolation framework; and thus we extend the interpolation-based first-order necessary conditions for $\mathcal{H}_2$ optimality of LTI systems  to the bilinear case. 
Finally,  we show that multipoint interpolation on the truncated Volterra series representation of a bilinear system leads to an asymptotically optimal approach to $\mathcal{H}_2$ optimal model reduction, leading to 
an efficient model reduction algorithm. Several numerical examples illustrate the effectiveness of the proposed  approach.
\end{abstract}

\begin{keywords} bilinear systems, model reduction, Volterra series, $\mathcal{H}_2$ approximation \end{keywords}

\begin{AMS}   93C10, 41A05, 93C15, 93B40, 93A15, 65K99  \end{AMS}
\pagestyle{myheadings}
\thispagestyle{plain}
\markboth{Garret Flagg and Serkan Gugercin}{MULTIPOINT VOLTERRA SERIES INTERPOLATION}

\section{Introduction}
Direct numerical simulation of dynamical systems has proven to be a principal tool in modeling, prediction and control of  a wide range of  physical phenomena. However, the growing need for accuracy in the modeling stage leads to very large-scale, complex dynamical systems whose simulations incur a huge burden on computational resources. This motivates {\it model reduction}, whose goal is to accurately approximate large-scale dynamical systems by simpler, smaller ones. These simpler reduced models are then used as surrogates to the original one in prediction, control or optimization settings. The theory and computational tools for model reduction of linear dynamical systems have matured drastically over the last two decades, leading to a greater focus on nonlinear systems.  Bilinear systems, which we consider in this paper, present us with a framework for extending the theory and methodology of model reduction from linear models to nonlinear ones. These models are a special class of (weakly) nonlinear systems characterized by the following systems of ordinary differential equations
\begin{equation}\label{bilinear_eqns}
\zeta:\left \{
\begin{array}{l}
\dot{\mathbf{x}}(t)=\mathbf{A x}(t)+\sum \limits_{k=1}^m\boldN_k\mathbf{x}(t)u_k(t)+\mathbf{B} \mathbf{u}(t)\\
\mathbf{y}(t)=\mathbf{C  x}(t),
\end{array}\right. 
\end{equation}
where $\boldA, \boldN_k \in \reals^{n\times n}$ for $k=1,\dots m$, $\boldB \in \reals^{n\times m}$ and $\boldC \in \reals^{p\times n}$. For brevity of the presentation, at times we will use the notation $\zeta=(\boldA,\boldN_1,\ldots,\boldN_m,\boldB,\boldC)$ to denote the bilinear system of (\ref{bilinear_eqns}).

Bilinear systems arise in a variety of applications ranging from examination of biological species to nuclear fission, and have proven most useful in modeling nonlinear phenomena of small magnitude \cite{Rugh,mohler,mohler1970natural,weiner1980sinusoidal}.  Recently, they have been used as natural models for stochastic control problems \cite{hartmann_balanced}, and have also proven useful in the model reduction of  parameter-varying linear systems \cite{benner2011h2}.   Given a bilinear system $\zeta$ of dimension $n$ as in (\ref{bilinear_eqns}), the goal of  model reduction in this setting is to construct a reduced bilinear system 

\begin{equation}\label{bilinear_eqns_rom}
\widetilde\zeta:\left \{
\begin{array}{l}
\dot{\widetilde{\mathbf{x}}}(t)=\boldAr\widetilde{\mathbf{x}}(t)+\sum \limits_{k=1}^m\boldNr_k\widetilde{\mathbf{x}}(t)u_k(t)+\boldBr \mathbf{u}(t)\\
\widetilde{\mathbf{y}}(t)=\boldCr\widetilde{\mathbf{x}}(t),
\end{array}\right. 
\end{equation}
where $\boldAr, \boldNr_k \in \reals^{r\times r}$ for $k=1,\dots m$, $\boldBr \in \reals^{r\times m}$ and $\boldCr \in \reals^{p\times r}$ with $r \ll n$ such that $\widetilde{\mathbf{y}}(t)$ is an accurate approximation to ${\mathbf{y}}(t)$ in an appropriate norm.
As for the full-order model,  $\widetilde\zeta=(\widetilde\boldA,\widetilde\boldN_1,\ldots,\widetilde\boldN_m\widetilde\boldB,\widetilde\boldC)$ will denote the reduced bilinear system of (\ref{bilinear_eqns_rom}).

As in the linear case, we will construct the reduced model via projection. We will construct two matrices $\boldVr \in \reals^{n \times r}$ and $\boldWr \in \reals^{n \times r}$ such that $\boldWr^T\boldVr$ is invertible. Then, the reduced-model in (\ref{bilinear_eqns_rom}) is given by 
\begin{eqnarray}
\begin{array}{ll}
\boldAr = (\boldWr^T\boldVr)^{-1}\boldWr^T \boldA \boldVr, & \boldNr_k = (\boldWr^T\boldVr)^{-1}\boldWr^T \boldN_k \boldVr ~{\rm for}~k=1,\ldots,m,
\\ 
\boldBr = \boldWr^T \boldB, ~~~~~{\rm and}~~~~~&
\boldCr =  \boldC \boldVr.
\end{array}
\end{eqnarray}
Several approaches from model reduction of linear systems have been already extended to bilinear systems. For example,
bilinear counterparts of gramians have been developed in  \cite{zhang2002h2,benner2011lyapunov}, and gramian-based model reduction techniques such as  balanced truncation  have been proposed in \cite{benner2011lyapunov}. However, as clearly discussed in \cite{benner2011lyapunov}, the resulting generalized Lyapunov equations in the bilinear case present enormous computational challenges even for medium scale problems. Even when these generalized Lyapunov equations can be solved, the resulting reduced models are not guaranteed to attain the nice properties such as an error bound enjoyed by balanced truncation in the linear case. However, we want to emphasize that for the examples where the bilinear counterpart of balanced truncation is applied, the method has performed quite well in practice; see \cite{benner2011lyapunov} for details. Interpolatory model reduction methods have also been successfully extended to bilinear  systems; see, for example, \cite{phillips2003projection,bai2006projection,breiten2009ms,breiten2010krylov}.  In these approaches, $\boldVr$ and $\boldWr$  are chosen in such a way that the subsystem transfer functions of the reduced model interpolates those of the full model at selected interpolation points. 
Zhang and Lam in  \cite{zhang2002h2} were the first to focus on the $\mathcal{H}_2$ optimal approximation of bilinear systems. In \cite{zhang2002h2} they extended the grammian-based Wilson conditions \cite{wilson1970optimum} for $\mathcal{H}_2$ optimality of linear  systems to the bilinear case. However, until very recently it was not clear how to enforce these optimality conditions.
Breiten and Benner in \cite{Breiten_H2} re-formulated these conditions in an equivalent but numerically more effective framework and  showed how to achieve $\mathcal{H}_2$ optimal approximations via an iterative, projection-based approach, called the Bilinear Iterative Rational Krylov Algorithm (B-IRKA), and thus extended the Iterative Rational Krylov Algorithm 
(IRKA) of Gugercin {\it et al.} \cite{H2} from  linear systems to  bilinear ones. B-IRKA has proved very successful, leading to  high-fidelity reduced models and  outperforming balancing-based bilinear model reduction methods, and has become the method of choice in most cases.

In this paper, we  focus on interpolatory approaches for reducing bilinear systems. The main contributions are threefold. After giving a short background on bilinear systems and a new derivation for the $\mathcal{H}_2$ norm of a bilinear system in Section \ref{siso_h2}, in  Section \ref{sec:volterra} we  introduce a new framework for interpolatory model reduction of bilinear systems where we show how to enforce multipoint interpolation of the underlying  Volterra series. This is in contrast to the current techniques where interpolation is enforced only on subsystem transfer functions as opposed to the Volterra Series. Then, in Section \ref{sec:volterra_h2}, we show that 
this new interpolation framework is indeed what lies behind the optimal $\mathcal{H}_2$ approximation of bilinear systems; 
thus generalizing the interpolation-based first-order necessary conditions for $\mathcal{H}_2$ optimality of LTI systems presented in \cite{H2} to the bilinear case.  Finally, in Section \ref{sec:TBIRKA} we show that multipoint interpolation on the the truncated Volterra series representation of a bilinear system leads to an asymptotically optimal approach to $\mathcal{H}_2$ optimal model reduction which is inexpensive to implement. Section \ref{sec:examples} illustrates the theoretical discussions via several numerical examples followed by conclusions in Section \ref{sec:conc}.

As noted in (\ref{bilinear_eqns}), we are interested in reducing multi-input/multi-output  
(MIMO) bilinear systems,
 and one of the main contributions of this paper, as presented in Algorithm \ref{alg:tbrika}.
However, this algorithm was inspired by an analysis of the interpolation properties associated with SISO bilinear systems, the other main contribution of the paper.  The interpolation-based approach to model reduction of bilinear SISO systems provides insight into the properties of $\mathcal{H}_2$ optimal bilinear approximations, but the results of the analysis are not readily generalizable to the MIMO case in their current formulation.  As such, our analysis of Volterra series type interpolation constraints is specific to SISO systems at a formal level, but is the basis for our approach to $\mathcal{H}_2$ optimal model reduction of both SISO and MIMO bilinear systems. Thus,  wherever we focus on SISO systems, this is clearly stated. For SISO bilinear systems, we will use the notation $\zeta=(\boldA,\boldN,\boldb,\boldc)$ where 
$\boldb,\boldc^T \in \reals^n$. 

\section{Background}
The external representation of a causal, stationary bilinear system $\zeta$ admits the following Volterra series representation which describes the nonlinear mapping of admissible inputs $\mathbf{u}(t) \in \mathcal{U} \subset \reals^m$ to outputs $\mathbf{y}(t)\in \reals^p$:

 \begin{equation}
\mathbf{y}(t)=\sum_{k=1}^{\infty}\int_0^{t_1}\int_0^{t_k}\cdots \int_0^{t_{k}}\mathbf{h}_k(t_1,t_2,\dots,t_k)(\mathbf{u}(t-\sum_{i=1}^k t_i)\otimes \cdots \otimes \mathbf{u}(t-t_k))\ud t_k \cdots \ud t_1.
\end{equation}
%Where $\mathbf{u}(t)=[u_1(t), u_2(t), \ldots, u_m(t)]^T \in \reals^m$ 
The regular Volterra kernels $\mathbf{h}_k$ are given as
\begin{align}
\mathbf{h}_k(t_1,t_2,\dots,t_k)=&\boldC e^{\boldA t_k}\bar{\boldN}(\boldI_m\otimes e^{\boldA t_{k-1}})(\boldI_m \otimes \bar{\boldN}) \cdots \nonumber \\ 
&(\underbrace{\boldI_m\otimes \cdots \otimes \boldI_m}_{k-2 \text{ times}} \otimes e^{\boldA t_2})(\underbrace{\boldI_m \otimes \cdots \otimes \boldI_m}_{k-2 \text{ times }} \otimes \bar{\boldN})\label{mimo_kernel}\\
&\cdot(\underbrace{\boldI_m \otimes \cdots \otimes \boldI_m}_{k-1 \text{ times}} \otimes e^{\boldA t_1})(\underbrace{\boldI_m \otimes \cdots \otimes \boldI_m}_{k-1 \text{ times }} \otimes \boldB)\nonumber.
\end{align}
where  $\bar{\boldN}=\begin{bmatrix}\boldN_1, \dots, \boldN_m\end{bmatrix}$.

The multivariable Laplace transform of the degree $k$ regular kernel (\ref{mimo_kernel}) of \Sys\ is given by
\begin{align}
\bHs=&\boldC(s_k\boldI-\boldA)^{-1}\bar{\boldN}[\boldI_m \otimes (s_{k-1}\boldI-\boldA)^{-1}](\boldI_m \otimes \bar{\boldN}) \cdots \nonumber \\
&\cdot[\underbrace{\boldI_m \otimes \cdots \otimes \boldI_m}_{k-2 \text{ times}} \otimes (s_2\boldI-\boldA)^{-1}](\underbrace{\boldI_m \otimes \cdots \otimes \boldI_m}_{k-2 \text{ times }} \otimes \bar{\boldN}) \label{MIMO_TF}\\
&\cdot[\underbrace{\boldI_m \otimes \cdots  \otimes \boldI_m}_{k-1 \text{ times}} \otimes (s_1\boldI-\boldA)^{-1}](\underbrace{\boldI_m \otimes \cdots \otimes \boldI_m}_{k-1 \text{ times }} \otimes \boldB)\nonumber.
\end{align}  
The functions $\bHs$ are called the $kth$ order transfer functions of the bilinear system.  

\subsection{The $\mathcal{H}_2$ norm}
As in the case of model reduction of linear systems, we need an appropriate  measure to quantify the error induced by the  reduction process. In this paper, we will focus on the $\mathcal{H}_2$ norm:
\begin{dfn}
 Let $\zeta$ be a MIMO bilinear system.  Define the $\mathcal{H}_2$ norm of $\zeta$ as
\begin{equation}
\|\Sys\|_{ _{\mathcal{H}_2}}=\Big(\sum \limits_{k=1}^{\infty} \sup \limits_{x_1>0, \dots, x_k>0} \int\limits_{-\infty}^{\infty}\cdots\int \limits_{- \infty}^{\infty} \big\|\mathbf{H}_i(x_1+\imath y_1,\dots, x_k+\imath y_1)\big\|_F^2 \mathrm{d}y_1\cdots \mathrm{d}y_k \Big)^{1/2},
\end{equation}
where $\mathbf{H}_i(s_1,\dots, s_k)$ is the $kth$ order transfer function as in (\ref{MIMO_TF}) and
 $\|\mathbf{H}_i(s_1,\dots, s_k)\|_F$ denotes the Frobenius norm of $\mathbf{H}_k(s_1, \dots, s_k)$.     
\end{dfn}

From Plancheral's theorem in several variables, the $\mathcal{H}_2$ norm of the bilinear system is equivalent to its $\mathcal{L}_2$ norm in the time domain. The $\mathcal{L}_2$ norm of a bilinear system is 

\begin{equation*}
\|\Sys\|_{\mathcal{L}_2}=\Big(\sum \limits_{k=1}^{\infty}\int_{0}^{\infty}\cdots\int_{0}^{\infty}\|\mathbf{h}_k(t_1,\dots,t_k)\|_F^2 \mathrm{d}t_1 \cdots \mathrm{d}t_k\Big)^{1/2}.
\end{equation*} 
where $\mathbf{h}_k(t_1,\dots,t_k)$ is the $kth$ Volterra kernel given in (\ref{mimo_kernel}).
When this norm converges, Zhang and Lam showed in \cite{zhang2002h2} that it can be given in terms of the realization parameters \boldA, $\boldN_1,\dots, \boldN_m$, \boldB, \boldC, as
\begin{equation}
\big\|\Sys\big\|_{\mathcal{L}_2}=\sqrt{trace(\boldC\sum \limits_{k=0}^{\infty} \mathbf{P}_k \boldC^T)}=\big\|\Sys\big\|_{\mathcal{H}_2}, \label{norm_equivalence}
\end{equation}
where $\mathbf{P}_0$ solves
\begin{equation}
\boldA \mathbf{P}_0+\mathbf{P}_0\boldA^T=\boldB\boldB^T,
\end{equation}
and for $k\ge1$, $\mathbf{P}_k$ solves
\begin{equation}
\boldA \mathbf{P}_k+\mathbf{P}_k\boldA^*=\sum \limits_{j=1}^M \boldN_j \mathbf{P}_{k-1} \boldN_j^T.
\end{equation}

\subsection{A pole-residue formulation of the $\mathcal{H}_2$ norm for SISO systems} \label{siso_h2}
In this section, we give a rigorous generalization of the pole-residue formula for the standard Hardy space $\mathcal{H}_2$ norm of a SISO LTI system to the case of SISO bilinear systems.   A similar expression was given independently by Breiten and Benner in \cite{benner2010krylov}, though our derivation of it here is new.
 These pole-residue based expressions will be used as the motivation for our multipoint interpolation method in Section \ref{sec:volterra}. 

In the single-input-single-output (SISO) case, the $kth$ order transfer functions of a bilinear system given in (\ref{MIMO_TF}) reduce to 
\begin{equation}
H_k(s_1,s_2,\dots,s_k)=\boldc(s_k\boldI-\boldA)^{-1}\boldN(s_{n-1}\boldI-\boldA)^{-1}\boldN\cdots\boldN(s_1\boldI-\boldA)^{-1}\boldb.
\end{equation}
Writing $(s_i\boldI-\boldA)^{-1}$ as the classical adjoint over the determinant, it is readily seen that 
\begin{equation}
H_k(s_1,s_2,\dots, s_k)=\frac{P(s_1,s_2,\dots,s_k)}{Q(s_1)Q(s_2)\cdots Q(s_k)},
\end{equation}
where $Q(s_j)=det(s_j\boldI-\boldA)$ for $j=1,\dots, k$, and $P(s_1,s_2,\dots, s_k)$ is a $k$-variate polynomial with maximum degree $k(n-1)$.  Thus $H_k(s_1,s_2,\dots,s_k)$ is a proper $k$-variate rational function whose singularities are characterized by a simple analytic variety.  This allows us to give a straightforward definition of the residues of $H_k(s_1,\dots, s_k)$, and write it as a sum of partial fractions determined by the residues and poles, analgous to the single variable case.
\begin{dfn}\label{dfn:residues}
For a $kth$-order transfer function $H_k(s_1,\dots, s_k)$, define the residues of $H_k(s_1,\dots, s_k)$ as
\begin{equation}
\phi_{ _{l_1,\dots,l_k}}=\lim\limits_{s_k \rightarrow \lambda_{l_k}}(s_k-\lambda_{l_k})\lim\limits_{s_{k-1} \rightarrow \lambda_{l_{k-1}}}(s_{k-1}-\lambda_{l_{k-1}})\cdots\lim\limits_{s_1\rightarrow \lambda_{l_1}}(s_1-\lambda_{l_1})H_k(s_1, \dots, s_k).
\end{equation}
\end{dfn}

\begin{thm}[Pole-Residue Formula for $H_k(s_1,\dots, s_k)$]\label{thm:pole_residue}\hspace{2pt}\\
Let $\displaystyle{H_k(s_1, \dots, s_k)=\frac{P(s_1,\dots,s_k)}{Q(s_1)Q(s_2)\cdots Q(s_k)}}$ where $P(s_1,\dots,s_k)$ is a polynomial in $k$ variables of total degree $k(n-1)$ and $Q(s_i)$ is a polynomial of degree $n$ in the variable $s_i$ with simple zeros at the points $\lambda_1, \dots, \lambda_n \in \complex$.  Then
\begin{equation}
H_k(s_1, \dots, s_k)=\sum\limits_{l_1=1}^n\cdots\sum \limits_{l_k=1}^n \frac{\phi_{ _{l_1,\dots,l_k}}}{\prod \limits_{i=1}^k(s_i-\lambda_{l_i})}
\end{equation}
\end{thm}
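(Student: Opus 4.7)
The plan is to prove the formula by induction on $k$, reducing at each step to the classical single-variable partial fraction decomposition, and exploiting the fact that the denominator $Q(s_1)\cdots Q(s_k)$ is \emph{separated}, with each factor depending on only one variable. Before beginning the induction, I would record the strengthening of the hypothesis that actually drives the argument: $P(s_1,\dots,s_k)$ has degree at most $n-1$ in each variable $s_i$ separately. This follows from the resolvent form $H_k = \boldc(s_k\boldI-\boldA)^{-1}\boldN\cdots\boldN(s_1\boldI-\boldA)^{-1}\boldb$, because writing $(s_i\boldI-\boldA)^{-1} = \mathrm{adj}(s_i\boldI-\boldA)/Q(s_i)$ reveals that the adjugate entries are polynomials of degree at most $n-1$ in $s_i$ and involve no other $s_j$. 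Combined with $\deg Q(s_i) = n$, this makes $H_k$ a \emph{proper} rational function in each variable separately, which is exactly what the univariate partial fraction step will require at every stage.

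For the base case $k=1$, the identity is the textbook partial-fraction expansion of a proper rational function with simple poles $\lambda_1,\dots,\lambda_n$, with $\phi_{l_1} = \lim_{s_1\to\lambda_{l_1}}(s_1-\lambda_{l_1})H_1(s_1)$ agreeing with Definition~\ref{dfn:residues}. For the inductive step, I would freeze $s_1,\dots,s_{k-1}$ as parameters and view $H_k$ as a proper rational function of $s_k$ alone whose denominator $Q(s_k)$ has simple zeros at $\lambda_1,\dots,\lambda_n$. Single-variable partial fractions in $s_k$ then yield
\begin{equation*}
H_k(s_1,\dots,s_k) \;=\; \sum_{l_k=1}^n \frac{R_{l_k}(s_1,\dots,s_{k-1})}{s_k-\lambda_{l_k}}, \qquad R_{l_k}(s_1,\dots,s_{k-1}) \;=\; \lim_{s_k\to\lambda_{l_k}}(s_k-\lambda_{l_k})\,H_k.
\end{equation*}
Each $R_{l_k}$ is a rational function in $k-1$ variables with denominator $Q(s_1)\cdots Q(s_{k-1})$, and since evaluation at $s_k=\lambda_{l_k}$ cannot raise degrees in the other variables, its numerator still has degree at most $n-1$ in each remaining $s_i$. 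The inductive hypothesis then gives
\begin{equation*}
R_{l_k}(s_1,\dots,s_{k-1}) \;=\; \sum_{l_1=1}^n\cdots\sum_{l_{k-1}=1}^n \frac{\phi_{l_1,\dots,l_{k-1},l_k}}{\prod_{i=1}^{k-1}(s_i-\lambda_{l_i})},
\end{equation*}
where, substituting back, the coefficient is the nested limit in Definition~\ref{dfn:residues}.

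The main (small) obstacle is the degree bookkeeping: one must check that the numerator of $H_k$, and of every $R_{l_k}$ obtained along the way, has degree at most $n-1$ in each remaining variable, so that no polynomial remainder ever appears in the successive single-variable expansions. Once the separated structure of both numerator and denominator is laid out, this is routine. A minor additional point is that the iterated limits in Definition~\ref{dfn:residues} are order-independent: after multiplying $H_k$ by $\prod_{i=1}^k (s_i-\lambda_{l_i})$, the resulting function is analytic in a neighborhood of $(\lambda_{l_1},\dots,\lambda_{l_k})$, so any iterated limit reduces to evaluation at the point. This justifies identifying the inductively produced coefficients with the $\phi_{l_1,\dots,l_k}$ of Definition~\ref{dfn:residues} and completes the argument.
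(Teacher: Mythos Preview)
The paper does not actually prove this theorem; it refers the reader to an external source, so there is no in-paper argument to compare against. Your inductive proof via iterated single-variable partial fractions is correct and is the natural route.

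You were also right to flag and strengthen the degree hypothesis. The stated condition that $P$ has total degree $k(n-1)$ is by itself \emph{not} sufficient for the formula: for instance, with $k=2$, $n=2$, $Q(s)=(s-1)(s+1)$ and $P(s_1,s_2)=s_1^2$ (total degree $2=k(n-1)$), one computes all four residues $\phi_{l_1,l_2}=\pm\tfrac14$ and the right-hand side collapses to $1/((s_1^2-1)(s_2^2-1))$, not $s_1^2/((s_1^2-1)(s_2^2-1))$. What is actually needed---and what you correctly extract from the resolvent form $\boldc(s_k\boldI-\boldA)^{-1}\boldN\cdots\boldN(s_1\boldI-\boldA)^{-1}\boldb$ via the adjugate---is that $P$ has degree at most $n-1$ in each variable \emph{separately}. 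With that in hand, properness in each $s_i$ is guaranteed, no polynomial part appears at any stage, and the induction goes through exactly as you describe. Your closing remark on order-independence of the iterated limits (analyticity of $\prod_i(s_i-\lambda_{l_i})H_k$ near the pole tuple) cleanly reconciles the coefficients produced by the induction with the $\phi_{l_1,\dots,l_k}$ of Definition~\ref{dfn:residues}.
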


\begin{proof}
For the brevity of the paper, we skip the proof of this paper and refer the reader 
to \cite{flagg2012interpolation}.
\end{proof}

This pole-residue formula will be used next to derive an expression for the $\mathcal{H}_2$ norm of a SISO bilinear system.

\begin{thm}[$\mathcal{H}_2$ norm expression]\label{thm:H2_norm_expression}
Let $\zeta$ be a SISO bilinear system with a finite $\mathcal{H}_2$ norm.  Then 
$$\|\zeta\|^2_{ _{\mathcal{H}_2}}=\sum \limits_{k=1}^{\infty}\sum \limits_{l_1=1}^{n}\sum\limits_{l_2=1}^{n}\cdots\sum\limits_{l_k=1}^n \phi_{ _{l_1,\dots,l_k}}H_k(-\lambda_{l_1},\dots,-\lambda_{l_k}).$$
\end{thm}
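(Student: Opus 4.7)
The plan is to combine the partial-fraction expansion of Theorem \ref{thm:pole_residue} with the frequency-domain definition of the $\mathcal{H}_2$ norm and reduce the squared norm to iterated one-variable contour integrals, thereby lifting the classical pole-residue identity $\|H\|_{\mathcal{H}_2}^2=\sum_l \phi_l\,H(-\lambda_l)$ for SISO LTI systems to each order-$k$ transfer function and then summing over $k$.

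First I would note that finiteness of $\|\zeta\|_{\mathcal{H}_2}$ forces $\mathrm{Re}(\lambda_l)<0$ for every eigenvalue $\lambda_l$ of $\boldA$, so that each $H_k(s_1,\ldots,s_k)$ is analytic on the product of closed right half-planes and the suprema in the norm definition are attained by letting $x_i\to 0^+$. Substituting the expansion
$$H_k(s_1,\ldots,s_k)=\sum_{l_1=1}^{n}\cdots\sum_{l_k=1}^{n}\phi_{l_1,\ldots,l_k}\prod_{i=1}^{k}\frac{1}{s_i-\lambda_{l_i}}$$
from Theorem \ref{thm:pole_residue} into the integrand, I would expand $|H_k(\imath y_1,\ldots,\imath y_k)|^2$ as a double sum over multi-indices $(\vec{l},\vec{m})$ and apply Fubini to factor the resulting $k$-dimensional integral into a product of one-variable integrals of the form
$$\int_{-\infty}^{\infty}\frac{dy_i}{(\imath y_i-\lambda_{l_i})(-\imath y_i-\overline{\lambda_{m_i}})},$$
each of which is evaluated by closing the contour in the right half-plane and picking up the single pole at $s=-\overline{\lambda_{m_i}}$, yielding a closed-form expression proportional to $(\lambda_{l_i}+\overline{\lambda_{m_i}})^{-1}$.

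At this point the squared norm becomes a finite bilinear combination of residues and inverse sums of poles, and the crucial observation is that summing first over $\vec{m}$ collapses this combination into the pole-residue expansion of $H_k$ itself, now evaluated at the mirrored frequencies $(-\overline{\lambda_{l_1}},\ldots,-\overline{\lambda_{l_k}})$. Invoking the realness of $(\boldA,\boldN,\boldb,\boldc)$, the pole set $\{\lambda_l\}$ is closed under complex conjugation and the residues $\phi_{l_1,\ldots,l_k}$ inherit the same symmetry, so that $\overline{\lambda_{l_i}}$ may be replaced by $\lambda_{l_i}$ in the final sum, producing the stated identity. The main technical obstacle is the rigorous interchange of the infinite sum over $k$, the suprema in $x_i$, and the multivariate integral: one must verify absolute convergence of the outer series under the finite-norm hypothesis so that Fubini and termwise contour integration are legitimate. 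Tracking the $(2\pi)^k$ normalization constants from the multivariate Plancherel theorem against the chosen convention for the $\mathcal{H}_2$ norm is a secondary bookkeeping task, but the structural collapse into $\phi_{l_1,\ldots,l_k}\,H_k(-\lambda_{l_1},\ldots,-\lambda_{l_k})$ is the essential content of the theorem.
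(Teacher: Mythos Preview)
Your argument is correct and arrives at the same identity, but the route differs from the paper's in one structural respect. The paper substitutes the pole-residue expansion of Theorem \ref{thm:pole_residue} for \emph{only one} of the two factors $H_k(\imath\omega_1,\dots,\imath\omega_k)$ in the Plancherel integrand, leaving the other factor $H_k(-\imath\omega_1,\dots,-\imath\omega_k)$ intact. The remaining multiple integral is then recognized directly as Cauchy's integral formula on the distinguished boundary of a polycylinder, which immediately returns the value $H_k(-\lambda_{l_1},\dots,-\lambda_{l_k})$ after an ML-type estimate on the semicircular arcs. No explicit double sum over $(\vec l,\vec m)$, no computation of the one-variable integrals $(\lambda_{l_i}+\overline{\lambda_{m_i}})^{-1}$, and no appeal to the conjugation symmetry of poles and residues is needed in that version.

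Your approach, by contrast, expands both factors, reduces everything to one-dimensional residue calculus, and then resums to rebuild $H_k$ at the mirror points. This is more elementary in that it avoids the polycylinder Cauchy formula entirely, at the cost of the extra realness/conjugation step and somewhat heavier bookkeeping. Both the Fubini justification and the $(2\pi)^k$ tracking you flag are indeed the only points requiring care, and they are handled identically in either approach. A minor remark: since the paper already writes the integrand as $H_k(-\imath\omega)H_k(\imath\omega)$ rather than $H_k(\imath\omega)\overline{H_k(\imath\omega)}$, realness is being used implicitly from the outset; if you adopt that same starting point, the residues $\phi_{\vec m}$ (rather than $\overline{\phi_{\vec m}}$) appear in your second expansion and the conjugation argument becomes unnecessary.
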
 

\begin{proof}
 From Plancherel's Theorem,
\begin{align}
\|\zeta\|^2_{ _{\mathcal{H}_2}}&=\|\zeta\|^2_{ _{\mathcal{L}_2(\imath\reals)}}\nonumber\\
&=\sum \limits_{k=1}^{\infty}\frac{1}{(2\pi)^k}\int \limits_{- \infty}^{\infty}\cdots \int \limits_{-\infty}^{\infty} H_k(-\imath \omega_1,\dots,-\imath\omega_k)H_k(\imath \omega_1, \dots, \imath \omega_k)\mathrm{d} \omega_k \mathrm{d}\omega_{k-1} \cdots \mathrm{d}\omega_{1}, \label{integral_expression}
\end{align}
and from Theorem \ref{thm:pole_residue} 
\begin{equation}\label{pole_residue}
H_k(s_1, \dots,s_k)=\sum \limits_{l_1=1}^n\cdots \sum \limits_{l_k=1}^n \frac{\phi_{l_1, \dots,l_k}}{\prod \limits_{i=1}^k(s_i-\lambda_{l_i})}.
\end{equation}

Substituting (\ref{pole_residue}) for $H_k(\imath\omega_1,\dots,\imath \omega_k)$ at the $kth$ term in the series (\ref{integral_expression}) and considering this term alone gives
\begin{align}
&=\frac{1}{(2\pi)^k}\int \limits_{- \infty}^{\infty}\cdots \int \limits_{- \infty}^{\imath \infty}\sum\limits_{l_1=1}^{n}\cdots\sum\limits_{l_k=1}^{n} \frac{\phi_{l_1,\dots,l_k}H_k(-\imath\omega_1,\dots,-\imath\omega_k)} {\prod \limits_{i=1}^k(\imath \omega_i-\lambda_{l_i})} \mathrm{d}\omega_k \mathrm{d}\omega_{k-1} \cdots \mathrm{d}\omega_{1}\nonumber\\
&=\sum\limits_{l_1=1}^{n}\cdots\sum\limits_{l_k=1}^{n}\frac{1}{(2\pi)^k}\int \limits_{-\infty}^{ \infty}\cdots \int \limits_{- \infty}^{\infty}
\frac{\phi_{l_1,\dots,l_k}H_k(-\imath\omega_1,\dots,-\imath \omega_k)} {\prod \limits_{i=1}^k(\imath \omega_i-\lambda_{l_i})} \mathrm{d}\omega_k \mathrm{d}\omega_{k-1} \cdots \mathrm{d}\omega_{1}\nonumber\\
&=\sum\limits_{l_1=1}^{n}\cdots\sum\limits_{l_k=1}^{n}
\phi_{l_1,\dots,l_k}H_k(-\lambda_{l_1},\dots,-\lambda_{l_k}) \label{last_step_1}
\end{align}
The expression in (\ref{last_step_1}) is an application of Cauchy's formula in $k$-variables, in the following way.  Consider the contours $\gamma_{R_j}=[-\imath R_j, \imath R_j] \cup \{z= R_je^{\imath \theta} \text{ for } \pi/2 \le \theta \le \frac{3\pi}{2}\}$ for $j=1,\dots, k$ in the complex plane, and let $\Gamma=\bigtimes\limits_{j=1}^k\gamma_{R_j}$ be the distinguished boundary of the polycylinder given by the set of points $\mathcal{D}_{R_1,\dots, R_j}=\{ (s_1, \dots, s_k)| s_j \in \text{int} \gamma_{R_j} \text{ for } j=1,\dots k\}$, where ``int'' denotes the interior of the contour.  For all sufficiently large $R_j$, $j=1,\dots, k$ all the points $(\lambda_{l_1}, \dots, \lambda_{l_k}) \in \mathcal{D}_{R_1, \dots, R_k}$ for $l_1,\dots, l_k =1, \dots,n$.  But the functions $H_k(-s_1,\dots, -s_k)$ are holomorphic on $\mathcal{D}_R$, and so by Cauchy's formula (see \cite{rudin1969function} for details on extending Cauchy's formula to polycylinders)
\begin{align*}
H_k(-\lambda_{l_1}, \dots, -\lambda_{l_k})&=\frac{1}{(2\pi \imath)^k}\int \limits_{\gamma_{R_1}}\cdots \int \limits_{\gamma_{R_k}}
\frac{H_k(-s_1,\dots,-s_k)} {\prod \limits_{i=1}^k(s_i-\lambda_{l_i})} \mathrm{d}s_k \mathrm{d}s_{k-1} \cdots \mathrm{d}s_{1}\\
&=\frac{1}{(2\pi\imath)^k}\int \limits_{\gamma_{R_1}}\cdots\int\limits_{\gamma_{R_2}}( \int\limits_{\pi/2}^{3\pi/2}
-\imath\frac{H_k(-s1,\dots,-R_k e^{-\imath \theta_k}) R_k\imath e^{\imath \theta_k}} {\prod \limits_{i=1}^{k-1}(s_i-\lambda_{l_i}) (R_ke^{\imath\theta_k}-\lambda_{l_k})} \mathrm{d}\theta_k \\
&+\int \limits_{-R_k}^{R_k} \frac{\imath H(-s_1,\dots, -\imath\omega_k)}{\prod \limits_{i=1}^{k-1}(s_i-\lambda_{l_i}) (\imath \omega_k-\lambda_{l_k}) }\mathrm{d}\omega_k )\mathrm{d}s_{k-1} \cdots \mathrm{d}s_{1}
\end{align*} 
Letting $R_k \rightarrow \infty$, the term \[|\int\limits_{\pi/2}^{3\pi/2}
-\imath\frac{H_k(-s1,\dots,-R_k e^{-\imath \theta_k}) R_k\imath e^{\imath \theta_k}} {\prod \limits_{i=1}^{k-1}(s_i-\lambda_{l_i}) (R_ke^{\imath\theta_k}-\lambda_{l_k})} \mathrm{d}\theta_k| \rightarrow 0,\] since $H_k(-s_1, \dots, -s_k)$ is a proper rational function in the variable $s_k$.  Thus, 
\begin{align*}
H_k(-\lambda_{l_1}, \dots, -\lambda_{l_k})&=\frac{1}{(2\pi \imath)^k}\int \limits_{\gamma_{R_1}}\cdots \int \limits_{\gamma_{R_k}}
\frac{H_k(-s_1,\dots,-s_k)} {\prod \limits_{i=1}^k(s_i-\lambda_{l_i})} \mathrm{d}s_k \mathrm{d}s_{k-1} \cdots \mathrm{d}s_{1}\\
&=\frac{1}{(2\pi\imath)^k}\int \limits_{\gamma_{R_1}}\cdots\int\limits_{\gamma_{R_2}}\int \limits_{-\infty}^{\infty} \frac{\imath H(-s_1,\dots, -\imath\omega_k)}{\prod \limits_{i=1}^{k-1}(s_i-\lambda_{l_i}) (\imath \omega_k-\lambda_{l_k}) }\mathrm{d}\omega_k )\mathrm{d}s_{k-1} \cdots \mathrm{d}s_{1}.
\end{align*}
Repeating this argument $k-1$ times yields the desired result that 
\begin{align}
H_k(-\lambda_{l_1}, \dots, -\lambda_{l_k})=\frac{1}{(2\pi)^k}\int \limits_{-\infty}^{ \infty}\cdots \int \limits_{- \infty}^{\infty}
\frac{H_k(-\imath\omega_1,\dots,-\imath \omega_k)} {\prod \limits_{i=1}^k(\imath \omega_i-\lambda_{l_i})} \mathrm{d}\omega_k \mathrm{d}\omega_{k-1} \cdots \mathrm{d}\omega_{1}.
\end{align}
Since this holds for every $k$, returning to our original goal we now have that 
\begin{align*}
&\sum \limits_{k=1}^{\infty}\frac{1}{(2\pi)^k}\int \limits_{- \infty}^{\infty}\cdots \int \limits_{-\infty}^{\infty} H_k(-\imath \omega_1,\dots,-\imath\omega_k)H_k(\imath \omega_1, \dots, \imath \omega_k)\mathrm{d} \omega_k \mathrm{d}\omega_{k-1} \cdots \mathrm{d}\omega_{1} \\
&=\sum \limits_{k=1}^{\infty}\sum\limits_{l_1=1}^{n}\cdots\sum\limits_{l_k=1}^{n}
\phi_{l_1,\dots,l_k}H_k(-\lambda_{l_1},\dots,-\lambda_{l_k}),
\end{align*}
which concludes the proof.
\end{proof}

When the realization term $\boldN$ is zero (so that the system is linear), this expression reduces to the pole-residue expression for the $\mathcal{H}_2$ norm for LTI systems derived  in \cite{gugercinprojection} and \cite{antoulas2005approximation}.

 Now let $\widetilde{\zeta}$ be an $r$-dimensional approximation to an $n$-dimensional bilinear system $\zeta$, with $r \ll n$, and let all reduced-dimension quantities be distinguished by tildes.  Applying the above derivation of the $\mathcal{H}_2$ norm to the error system $\zeta-\widetilde{\zeta}$ yields the following expression for the $\mathcal{H}_2$ error:
\begin{align*}
\|\zeta-\widetilde{\zeta}\|^2_{ _{\mathcal{H}_2}}&=\sum \limits_{k=1}^{\infty}\sum \limits_{l_1=1}^{n}\sum\limits_{l_2=1}^{n}\cdots\sum\limits_{l_k=1}^n \phi_{ _{l_1,\dots,l_k}}(H_k(-\lambda_{l_1},\dots,-\lambda_{l_k})-\widetilde{H}_k(-\lambda_{l_1},\dots,-\lambda_{l_k}))\\
&+\sum \limits_{k=1}^{\infty}\sum \limits_{l_1=1}^{r}\sum\limits_{l_2=1}^{r}\cdots\sum\limits_{l_k=1}^r \widetilde{\phi}_{ _{l_1,\dots,l_k}}(H_k(-\widetilde{\lambda}_{l_1},\dots,-\widetilde{\lambda}_{l_k})-\widetilde{H}_k(-\widetilde{\lambda}_{l_1},\dots,-\widetilde{\lambda}_{l_k})).
\end{align*}
Thus, the $\mathcal{H}_2$-norm error is due to the mismatch of weighted sums of the transfer functions evaluated at all possible combinations.  By analogy with the linear case, we would like to eliminate the error due to the mismatch at the reduced dimension singularities.  In the next section, we introduce a new interpolation scheme that makes it possible to match the full and reduced dimension systems along weighted sums of the transfer functions evaluated at all possible combinations of a collection of frequencies.

\section{Multipoint Volterra series interpolation} \label{sec:volterra}
In this section, we present a new method of multipoint interpolation that respects the external Volterra series representation of bilinear dynamical systems, in the sense that it aims to capture the response of the whole Volterra series with respect to a collection of frequencies.  Similar interpolation problems involving matching a functional defined by weighted sums of function evaluations have appeared in other 
contexts under names such as ``integral interpolation'' \cite{beatson2007integral}. The interpolation framework we introduce here is fundamentally different from the existing interpolation based model reduction approaches for  bilinear systems; such as \cite{phillips2003projection,bai2006projection,breiten2009ms,breiten2010krylov}.  In these works, the goal is to find a reduced bilinear system as in $\widetilde \zeta$ whose leading $kth$ order transfer functions interpolates those of the original one; i.e. 
$$
H_k(\sigma_1,\ldots,\sigma_k) = \widetilde{H}_k(\sigma_1,\ldots,\sigma_k),~~{\rm for}~~
k=1,\ldots,N,
$$
where $\{\sigma_i\}\in \complex$ are the interpolation points in the complex plane. However, instead of interpolating some of the leading subsystem transfer functions, in this paper we will show how to interpolate the whole Volterra series.

  Theorem \ref{thm:pole_residue} indicates that important system properties are measured by weighted sums of the $kth$ order transfer functions evaluated at all possible combinations of the points $-\lambda(\boldA)$.  In general then, we would like to construct reduced-dimensional models that capture these properties of the Volterra series of the full-dimensional system \Sys.  Consider therefore, the following multipoint interpolation problem.  

Given two sets of points $\sigma_1, \sigma_2, \dots, \sigma_r \in \complex$ and $\mu_1,\dots, \mu_r \in \complex$, together with two matrices $\mathbf{U}, \mathbf{S} \in \reals^{r \times  r}$, fix some $j \in \{1, 2, \dots, r\}$ and define the weighted series
\begin{align*}
\nu_j=\sum \limits_{k=1}^{\infty} \sum\limits_{l_1}^r \sum \limits_{l_2}^r \cdots \sum \limits_{l_{k-1}}^r \eta_{ _{l_1,l_2, \dots, l_{k-1},j}}H(\sigma_{l_1},\sigma_{l_2}, \dots, \sigma_j)<\infty\\
\gamma_j=\sum \limits_{k=1}^{\infty} \sum\limits_{l_1}^r \sum \limits_{l_2}^r \cdots \sum \limits_{l_{k-1}}^r \widehat{\eta}_{ _{l_1,l_2, \dots, l_{k-1},j}}H( \mu_j, \mu_{l_2}, \dots, \mu_{l_k}) < \infty
\end{align*}
where $l_1, l_2, \dots, l_k= 1, \dots, r$.  The weights $\eta_{ _{l_1,l_2, \dots, l_{k-1},j}}$ are given in terms of the entries of $\mathbf{U}=\{u_{i,j}\}$ as
\begin{equation}\label{weights}
\eta_{ _{l_1,l_2, \dots, l_{k-1},j}}=u_{j,l_{k-1}}u_{l_{k-1},l_{k-2}}\cdots u_{l_2,l_1} \text{ for } k\ge 2\text{ and }\eta_{l_1}=1 \text{ for }l_1=1, \dots, r.
\end{equation}
 For example, $\eta_{1,2,3}=u_{3,2}u_{2,1}$.  Thus, the weights $\eta_{ _{l_1,l_2, \dots, l_{k-1},j}}$ are generated by multiplying sequences of the entries of $\mathbf{U}$ together in the combinations determined by the index $l_i$. The weights $\widehat{\eta}_{ _{l_1,l_2, \dots, l_{k-1},j}}$ are defined in the same way in terms of the entries of $\mathbf{S}$. Note that for the interpolation conditions in $\sigma_j$,  $s_k=\sigma_j$, whereas $s_1, \dots, s_{k-1}$ may take any other value $\sigma \in \{\sigma_1, \dots, \sigma_r\}$ for all the transfer function evaluations in the series.  Analogously, for the interpolation conditions in $\mu_j$, $s_1=\mu_j$, whereas $s_2 \dots, s_{k-1}$ may take any other value $\mu \in \{\mu_1, \dots, \mu_r\}$ for all the transfer function evaluations in the series.  Given the full-order SISO bilinear system
 ${\zeta}:=$(\boldA, \boldN, \boldb, \boldc), together with the interpolation 
  data $\{\nu_j\}_{j=1}^r, \{\gamma_j\}_{j=1}^r$, the goal is to construct a reduced order system $\widetilde{\zeta}:=$(\boldAr, \boldNr, \boldbr, \boldcr)  of dimension $r$ so that for each $j=1, \dots r$

\begin{align} \label{volterra_series_interpolation}
\sum \limits_{k=1}^{\infty} \sum\limits_{l_1}^r \sum \limits_{l_2}^r \cdots \sum \limits_{l_{k-1}}^r \eta_{ _{l_1,l_2, \dots, l_{k-1},j}}\widetilde{H}_k(\sigma_{l_1},\sigma_{l_2}, \dots, \sigma_j)
=\nu_j 
\end{align}
and 
\begin{equation} \label{volterra_series_interpolation_mu}
\sum \limits_{k=1}^{\infty} \sum\limits_{l_2}^r \sum \limits_{l_3}^r \cdots \sum \limits_{l_{k}}^r \widehat{\eta}_{ _{l_1,l_2, \dots, l_{k-1},j}}\widetilde{H}_k(\mu_j,\mu_{l_2}, \dots, \mu_{l_k})=\gamma_j.
\end{equation}
In order to solve this problem we first consider the special connection between Volterra series and generalized Sylvester equations in the following lemma.

\begin{lemma}\label{lemma:sylvester_equation_solution}
Let $\zeta :=$(\boldA, \boldN, \boldb, \boldc) be a stable SISO bilinear system of dimension $n$. Suppose that for some $r < n$, points $\sigma_1, \dots, \sigma_r \in \complex$ and $\mu_1,\dots,\mu_r  \in \complex$, together with $\mathbf{U}, \mathbf{S} \in \reals^{r \times r}$ are given so that the series

\begin{align} 
\mathbf {v}_j=\sum \limits_{k=1}^{\infty}& \sum\limits_{l_1}^r  \cdots \sum \limits_{l_{k-1}}^r \eta_{ _{l_1, \dots, l_{k-1},j}}(\sigma_j\boldI-\boldA)^{-1}\boldN(\sigma_{l_{k-1}}\boldI-\boldA)^{-1}\boldN \cdots \boldN(\sigma_{l_1}\boldI-\boldA)^{-1}\boldb  \label{sylvester_series}
\end{align}
and
\begin{align}
\mathbf{u}_j =\sum \limits_{k=1}^{\infty} &\sum\limits_{l_1}^r 
\cdots \sum 
\limits_{l_{k-1}}^r \widehat{\eta}_{ _{l_1, \dots, l_{k-1},j}}(\mu_j\boldI-\boldA^T)^{-1}\boldN^T(\mu_{l_{k-1}}\boldI-\boldA^T)^{-1}\boldN^T \cdots \boldN^T(\mu_{l_1}\boldI-\boldA^T)^{-1}\boldc^T \label{sylvester_series_W}
\end{align}
 converge for each $\sigma_j$, and $\mu_j$.   Let $\mathbf{\Lambda}=\text{diag}(\sigma_1, \dots, \sigma_r)$ and $\mathbf{M}=\text{diag}(\mu_1,\dots, \mu_r)$. 
Then the matrices $\boldVr=[\mathbf{v}_1,\dots,\mathbf{v}_r]$, and $\boldWr=[\mathbf{w}_1,\dots,\mathbf{w}_r]\in \reals^{n \times r}$ solve the generalized Sylvester equations

\begin{equation}
\boldVr \mathbf{\Lambda}-\boldA\boldVr-\boldN\boldVr\mathbf{U}^T=\boldb\bolde^T  \label{volt_sylvester_eqn1}
\end{equation}
and
\begin{equation}
\boldWr\mathbf{M}-\boldA^T\boldWr-\boldN^T\boldWr\mathbf{S}^T=\boldc^T\bolde^T. \label{volt_sylvester_eqn2}
\end{equation}

\end{lemma}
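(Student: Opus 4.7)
The plan is to prove the Sylvester equation column-by-column. Fix $j \in \{1,\dots,r\}$ and examine the $j$-th column of the left-hand side of \eqref{volt_sylvester_eqn1}, namely
\[
(\sigma_j\boldI-\boldA)\mathbf{v}_j - \boldN\,\boldVr(\boldU^T)_{*,j}
= (\sigma_j\boldI-\boldA)\mathbf{v}_j - \sum_{i=1}^r u_{j,i}\,\boldN\mathbf{v}_i.
\]
The goal is to show this equals $\boldb$, which is the $j$-th column of $\boldb\bolde^T$ (interpreting $\bolde$ as the vector of ones in $\reals^r$).

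First I would premultiply the series \eqref{sylvester_series} by $(\sigma_j\boldI-\boldA)$. This annihilates the leading resolvent $(\sigma_j\boldI-\boldA)^{-1}$ in every term of the series. The $k=1$ term of $\mathbf{v}_j$, namely $(\sigma_j\boldI-\boldA)^{-1}\boldb$, then becomes precisely $\boldb$ (using the convention $\eta_j=1$). So it suffices to show that the remaining $k\ge 2$ terms equal $\sum_{i=1}^r u_{j,i}\,\boldN\mathbf{v}_i$.

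Next I would factor. For $k\ge 2$, the weight is
\[
\eta_{l_1,\dots,l_{k-1},j} = u_{j,l_{k-1}}\,u_{l_{k-1},l_{k-2}}\cdots u_{l_2,l_1},
\]
so I can peel off the factor $u_{j,l_{k-1}}$ and pull the outer sum over $l_{k-1}$ to the front. Re-labeling $i := l_{k-1}$ and shifting the summation index $k\mapsto k-1$, the tail rearranges as
\[
\sum_{k\ge 2}\sum_{l_1,\dots,l_{k-1}} \eta_{l_1,\dots,l_{k-1},j}\,\boldN(\sigma_{l_{k-1}}\boldI-\boldA)^{-1}\boldN\cdots(\sigma_{l_1}\boldI-\boldA)^{-1}\boldb
= \sum_{i=1}^r u_{j,i}\,\boldN\mathbf{v}_i,
\]
since the bracketed series in the right-hand side, with residual weights $u_{i,l_{k-2}}\cdots u_{l_2,l_1}$ (and the new leading resolvent $(\sigma_i\boldI-\boldA)^{-1}$), reproduces exactly the defining series of $\mathbf{v}_i$ in \eqref{sylvester_series}. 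Combining the two pieces yields $(\sigma_j\boldI-\boldA)\mathbf{v}_j = \boldb + \sum_i u_{j,i}\boldN\mathbf{v}_i$, which is column $j$ of \eqref{volt_sylvester_eqn1}. Collecting columns for $j=1,\dots,r$ and using $\boldLambda=\mathrm{diag}(\sigma_1,\dots,\sigma_r)$ gives the full equation. The argument for $\boldWr$ in \eqref{volt_sylvester_eqn2} is structurally identical: one multiplies \eqref{sylvester_series_W} by $(\mu_j\boldI-\boldA^T)$, peels off $\widehat{\eta}_{l_1,\dots,l_{k-1},j}$ via the entries $s_{j,\cdot}$ of $\boldS$, and re-indexes.

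The main obstacle is a bookkeeping/convergence issue rather than a conceptual one: the manipulation relies on freely reordering and re-indexing an infinite sum of vectors in $\reals^n$, so the hypothesis that the series defining $\mathbf{v}_j$ and $\mathbf{w}_j$ converge must be interpreted in the absolute sense to justify the interchange. Under the stated stability and summability assumptions this follows from standard Neumann/geometric majorization of the resolvent norms, but writing out the rearrangement precisely—especially keeping track of which index gets absorbed into which factor after the shift $k\mapsto k-1$—is where care is required.
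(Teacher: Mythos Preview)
Your argument is correct. The column-by-column verification you outline---premultiplying by $(\sigma_j\boldI-\boldA)$, isolating the $k=1$ contribution as $\boldb$, and then peeling off the outer weight factor $u_{j,l_{k-1}}$ to recognize the tail as $\sum_i u_{j,i}\boldN\mathbf{v}_i$---is sound, and your remark about absolute summability is the right caveat for the rearrangement.

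The paper proceeds slightly differently: rather than verifying that the given series satisfies the Sylvester equation, it \emph{constructs} the solution as $\boldVr=\sum_{k\ge 1}\boldV^{(k)}$, where $\boldV^{(1)}$ solves the ordinary Sylvester equation $\boldV^{(1)}\boldLambda-\boldA\boldV^{(1)}=\boldb\bolde^T$ and each subsequent $\boldV^{(k)}$ solves $\boldV^{(k)}\boldLambda-\boldA\boldV^{(k)}=\boldN\boldV^{(k-1)}\boldU^T$. An induction on $k$ then identifies the $j$-th column of $\boldV^{(k)}$ with the $k$-th block of terms in the series \eqref{sylvester_series}. The underlying algebra (the recursion $\eta_{l_1,\dots,l_{k-1},j}=u_{j,l_{k-1}}\eta_{l_1,\dots,l_{k-2},l_{k-1}}$) is identical to yours; the difference is packaging. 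Your route is more direct for proving the lemma as stated. The paper's iterative construction, however, pays off later: the partial sums $\sum_{k=1}^N\boldV^{(k)}$ are exactly the matrices $\mathbf{S}_N$ that drive the truncated algorithm TB-IRKA in Section~\ref{sec:TBIRKA}, so introducing the $\boldV^{(k)}$ here foreshadows that development.
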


\begin{proof}
We first show that the $jth$ column of \boldVr\ is equivalent to (\ref{sylvester_series}).  Let $\boldV^{(1)}\in \reals^{n\times r}$ solve
\begin{equation}
\boldV^{(1)}\mathbf{\Lambda}-\boldA\boldV^{(1)}=\boldb\bolde^T
\end{equation}
and for $k\ge 2$, let $\boldV^{(k)} \in \reals^{n\times r}$ be the solution to 
\begin{equation}
\boldV^{(k)}\mathbf{\Lambda}-\boldA\boldV^{(k)}=\boldN\boldV^{(k-1)}\mathbf{U}^T \label{volt_sylvester_terms}
\end{equation}
Then $\mathbf{v}_{1,j}=(\sigma_j\boldI-\boldA)^{-1}\boldb$, and in general for $k\ge 2$
\begin{equation}
\mathbf{v}_{k,j}=(\sigma_j\boldI-\boldA)^{-1}\mathbf{f}_{k-1,j}
\end{equation}
where $\mathbf{f}_{k-1,j}$ is the $jth$ column of $\boldN\boldV^{(k-1)}\boldU$.  We show  by induction on $k$ that 
$$
\mathbf{f}_{k-1,j}=\sum\limits_{l_{k-1}}^r\sum\limits_{l_{k-2}=1}^r \cdots \sum \limits_{l_1}^r\eta_{ _{l_1,l_2,\dots,l_{k-1},j}}\boldN(\sigma_{l_{k-1}}\boldI-\boldA)^{1}\boldN(\sigma_{l_{k-2}}\boldI-\boldA)^{-1}\boldN\cdots\boldN(\sigma_{l_1}\boldI-\boldA)^{-1}\boldb.
$$
Let $k=2$.  Then 
$$
\mathbf{f}_{1,j}=\sum_{l_1=1}^r u_{ _{j,l_1}}\boldN\mathbf{ v}_{ _{1,l_1}} =\sum_{l_1=1}^r \eta_{ _{l_1,j}}\boldN(\sigma_{l_1}\boldI-\boldA)^{-1}\boldb.
$$
Now suppose the statement holds for $k>2$.  Then 
\begin{align*}
\mathbf{v}_{k,j}&=(\sigma_j\boldI-\boldA)^{-1}\mathbf{f}_{k-1}\\
&=\sum \limits_{l_{k-1}}^r \sum \limits_{l_{k-2}=1}^r \cdots \sum \limits_{l_1=1}^r\eta_{ _{l_1,l_2,\dots,l_{k-1},j}}(\sigma_j\boldI-\boldA)^{-1}\boldN(\sigma_{l_{k-1}}\boldI-\boldA)^{-1}\boldN\cdots(\sigma_{l_1}\boldI-\boldA)^{-1}\boldb,
\end{align*}
and therefore
\begin{align*}
\mathbf{f}_{k,j}&=\sum_{l_{k}}^{r} u_{j,l_{k}}\boldN\mathbf{v}_{k, l_k}\\
&=\sum\limits_{l_k}^r\sum\limits_{l_{k-1}}^r\cdots\sum\limits_{l_1}^r u_{j,l_k}\eta_{l_1,l_2,\dots,l_{k-1},l_k}\boldN(\sigma_{l_k}\boldI-\boldA)^{-1}\boldN\cdots \boldN (\sigma_{l_2}\boldI-\boldA)^{-1}\boldb\\
&=\sum\limits_{l_k}^r\sum\limits_{l_{k-1}}^r\cdots\sum\limits_{l_1}^r \eta_{l_1,l_2,\dots,l_{k-1},l_k,j}\boldN(\sigma_{l_k}\boldI-\boldA)^{-1}\boldN\cdots \boldN (\sigma_{l_2}\boldI-\boldA)^{-1}\boldb.
\end{align*}
By assumption, we therefore have that the series $\boldVr=\sum \limits_{k=1}^{\infty} \mathbf{V}^{(k)}$ converges. Moreover, one can now simply check that $\boldVr$ is a solution to  (\ref{volt_sylvester_eqn1}).  An exactly analogous proof shows that \boldWr\ solves
(\ref{volt_sylvester_eqn2}). 
\end{proof}

\begin{thm}[Volterra Series Interpolation]\label{thm:volt_series_interp}
Let $\zeta :=$(\boldA, \boldN, \boldb, \boldc) be a SISO bilinear system of dimension $n$. Suppose that for some $r < n$, points $\sigma_1, \dots, \sigma_r \in \complex$ and $\mu_1,\dots,\mu_r  \in \complex$, together with $\mathbf{U}, \mathbf{S} \in \reals^{r \times r}$, all the hypotheses of Lemma \ref{lemma:sylvester_equation_solution} hold. Moreover, let $\boldVr$ and $\boldWr$ be the  solutions of (\ref{volt_sylvester_eqn1}) and (\ref{volt_sylvester_eqn2}) respectively as in Lemma \ref{lemma:sylvester_equation_solution}). 
If $\boldWr^T\boldVr \in \reals^{r \times r}$ is invertible, then the reduced order model $\widetilde{\zeta}:=$(\boldAr, \boldNr, \boldbr, \boldcr) of order $r$ defined by 
\begin{align}
\boldAr&=(\boldWr^T\boldVr)^{-1}\boldWr^T\boldA\boldVr,\hspace{5pt}& \boldNr&=(\boldWr^T\boldVr)^{-1}\boldWr^T\boldN\boldVr,\nonumber\\
\boldbr&=(\boldWr^T\boldVr)^{-1}\boldWr^T\boldb, &\boldcr&=\boldc\boldVr \label{volt_series_ROM}
\end{align}
 satisfies (\ref{volterra_series_interpolation}) and (\ref{volterra_series_interpolation_mu}) for each  
$\sigma_j$ and
 $\mu_j$, respectively, for $j=1, \dots, r$.
\end{thm}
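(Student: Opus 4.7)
My plan is to apply Lemma \ref{lemma:sylvester_equation_solution} to the reduced system $\widetilde{\zeta}$ itself and then identify the resulting reduced Sylvester solutions in closed form by directly projecting (\ref{volt_sylvester_eqn1}) and (\ref{volt_sylvester_eqn2}). Applied to $\widetilde{\zeta}=(\boldAr,\boldNr,\boldbr,\boldcr)$, the lemma tells us that the reduced weighted Volterra sums associated with the shifts $\{\sigma_j\}$ and $\{\mu_j\}$ are the columns of $r\times r$ matrices $\mathbf{X}$ and $\mathbf{Y}$ satisfying the reduced generalized Sylvester equations
\begin{equation*}
\mathbf{X}\mathbf{\Lambda}-\boldAr\mathbf{X}-\boldNr\mathbf{X}\mathbf{U}^T=\boldbr\bolde^T \quad \text{and} \quad \mathbf{Y}\mathbf{M}-\boldAr^T\mathbf{Y}-\boldNr^T\mathbf{Y}\mathbf{S}^T=\boldcr^T\bolde^T.
\end{equation*}
The problem then reduces to identifying $\mathbf{X}$ and $\mathbf{Y}$ in terms of the full-order matrices $\boldVr$ and $\boldWr$.

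For $\mathbf{X}$, I would left-multiply (\ref{volt_sylvester_eqn1}) by $(\boldWr^T\boldVr)^{-1}\boldWr^T$; under the projection definitions in (\ref{volt_series_ROM}), each term collapses to its reduced counterpart and the result is precisely the first reduced Sylvester equation evaluated at $\mathbf{X}=\mathbf{I}_r$. For $\mathbf{Y}$, I would instead left-multiply (\ref{volt_sylvester_eqn2}) by $\boldVr^T$; the right-hand side becomes $\boldcr^T\bolde^T$, and the algebraic identity
\begin{equation*}
\boldAr^T(\boldVr^T\boldWr)=\boldVr^T\boldA^T\boldWr
\end{equation*}
(which follows directly from $\boldAr^T=\boldVr^T\boldA^T\boldWr(\boldVr^T\boldWr)^{-1}$), together with its $\boldN$ counterpart, shows that $\mathbf{Y}=\boldVr^T\boldWr$ is a solution. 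Invoking uniqueness of the two reduced Sylvester equations then yields $\mathbf{X}=\mathbf{I}_r$ and $\mathbf{Y}=\boldVr^T\boldWr$.

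The interpolation conditions now follow by direct substitution. For the $\sigma_j$ conditions, the reduced weighted Volterra sum at $\sigma_j$ is $\boldcr$ applied to the $j$-th column of $\mathbf{X}$, which gives $\boldcr\,\bolde_j=(\boldc\boldVr)\bolde_j=\boldc\,\mathbf{v}_j=\nu_j$, establishing (\ref{volterra_series_interpolation}). For the $\mu_j$ conditions, the construction in Lemma \ref{lemma:sylvester_equation_solution} identifies $\gamma_j=\mathbf{u}_j^T\boldb$ (and analogously for the reduced side), so
\begin{equation*}
\widetilde{\gamma}_j=(\boldVr^T\mathbf{u}_j)^T\boldbr=\mathbf{u}_j^T\,\boldVr(\boldWr^T\boldVr)^{-1}\boldWr^T\boldb=\mathbf{u}_j^T\,\mathcal{P}\,\boldb,
\end{equation*}
where $\mathcal{P}=\boldVr(\boldWr^T\boldVr)^{-1}\boldWr^T$ is the oblique projector onto $\mathrm{range}(\boldVr)$ along $\ker(\boldWr^T)$. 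Since $\mathbf{u}_j=\boldWr\bolde_j\in\mathrm{range}(\boldWr)$, we have $\mathbf{u}_j^T\mathcal{P}=\bolde_j^T\boldWr^T\mathcal{P}=\bolde_j^T\boldWr^T=\mathbf{u}_j^T$, so $\widetilde{\gamma}_j=\mathbf{u}_j^T\boldb=\gamma_j$.

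The main technical hurdle I anticipate is justifying uniqueness of the reduced generalized Sylvester equations: in vectorized form, the first reduced equation is governed by the operator $\mathbf{\Lambda}^T\otimes\mathbf{I}_r-\mathbf{I}_r\otimes\boldAr-\mathbf{U}\otimes\boldNr$, whose invertibility is a nontrivial spectral condition. I would handle this by inheriting the series-convergence hypothesis from Lemma \ref{lemma:sylvester_equation_solution} applied to the reduced system; a Neumann-type expansion of the reduced Sylvester operator simultaneously provides existence of the series solution and uniqueness, so that the identifications $\mathbf{X}=\mathbf{I}_r$ and $\mathbf{Y}=\boldVr^T\boldWr$ can be made without ambiguity.
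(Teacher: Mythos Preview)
Your proposal is correct and follows essentially the same route as the paper's proof: project the full-order Sylvester equations onto the reduced space to find that $\mathbf{I}_r$ and $\boldVr^T\boldWr$ solve the reduced generalized Sylvester equations, then invoke the series representation from Lemma~\ref{lemma:sylvester_equation_solution} for the reduced system to read off the interpolation identities. The paper uses the skew projector $\mathbf{P}=\boldVr(\boldWr^T\boldVr)^{-1}\boldWr^T$ (and $\mathbf{P}^T$) rather than multiplying directly by $(\boldWr^T\boldVr)^{-1}\boldWr^T$ and $\boldVr^T$, but this is a cosmetic difference; your projector identity $\mathbf{u}_j^T\mathcal{P}=\mathbf{u}_j^T$ for the $\mu_j$ conditions is equivalent to the paper's manipulation via $\boldWr(\boldVr^T\boldWr)^{-1}\boldsymbol{\xi}_j=\mathbf{u}_j$.

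One remark: the uniqueness issue you flag is real, and the paper glosses over it just as you do---it simply asserts that the columns of $\mathbf{\Gamma}=\boldI_r$ ``can be represented'' by the reduced series without checking that the reduced series converges or that the reduced Sylvester operator is invertible. Your proposed Neumann-series resolution is the natural one, but note that convergence of the reduced series does \emph{not} follow automatically from convergence of the full-order series in Lemma~\ref{lemma:sylvester_equation_solution}; it is an additional (implicit) hypothesis in both your argument and the paper's.
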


\begin{proof}
 By Lemma \ref{lemma:sylvester_equation_solution}, we have shown how the columns of \boldVr, and \boldWr\ can be uniquely identified with the Volterra series we wish to match. Now define the skew projector $\mathbf{P}=\boldVr(\boldWr^T\boldVr)^{-1}\boldWr^T$.  Then
\begin{align}
\mathbf{P}(\boldVr\mathbf{\Lambda}-\boldA\boldVr-\boldN\boldVr\mathbf{U}-\boldb\bolde^T)=&\boldVr(\mathbf{\Lambda}-\boldAr-\boldNr\mathbf{U}-\boldbr\bolde^T)=\mathbf{0}.
\end{align}
Since $\boldVr$ is full rank, it follows that $\mathbf{\Gamma}=\boldI_r$ solves the projected Sylvester equation
\[\mathbf{\Gamma}\mathbf{\Lambda}-\boldAr\mathbf{\Gamma}-\boldNr\mathbf{\Gamma}\mathbf{U}^T=\boldbr\bolde^T.\]
By the same construction as above, the $jth$ column of $\mathbf{\Gamma}$, denoted 
by ${\boldsymbol\gamma}_j$, can be represented as 
\[\mathbf{\boldsymbol\gamma}_j=\sum \limits_{k=1}^{\infty} \sum\limits_{l_1}^r \sum \limits_{l_2}^r \cdots \sum \limits_{l_{k-1}}^r \eta_{ _{l_1,l_2, \dots, l_{k-1},j}}(\sigma_j\boldI_r-\boldAr)^{-1}\boldNr(\sigma_{l_{k-1}}\boldI_r-\boldAr)^{-1}\boldNr\cdots \boldNr(\sigma_{l_1}\boldI_r-\boldAr)^{-1}\boldbr.\]
Therefore
\begin{align}\label{volt_series_1}
\boldVr&\mathbf{\boldsymbol\gamma}_j=\mathbf{v}_j
\nonumber \\
& 
=\sum \limits_{k=1}^{\infty} \sum\limits_{l_1}^r 
%\sum \limits_{l_2}^r
 \cdots \sum \limits_{l_{k-1}}^r \eta_{ _{l_1, \dots, l_{k-1},j}}\boldVr(\sigma_j\boldI_r-\boldAr)^{-1}\boldNr(\sigma_{l_{k-1}}\boldI_r-\boldAr)^{-1}\boldNr\cdots \boldNr(\sigma_{l_1}\boldI_r-\boldAr)^{-1}\boldbr \nonumber \\
&=\sum \limits_{k=1}^{\infty} \sum\limits_{l_1}^r 
%\sum \limits_{l_2}^r 
\cdots \sum \limits_{l_{k-1}}^r \eta_{ _{l_1,l_2, \dots, l_{k-1},j}}(\sigma_j\boldI-\boldA)^{-1}\boldN(\sigma_{l_{k-1}}\boldI-\boldA)^{-1}\boldN \cdots \boldN(\sigma_{l_1}\boldI-\boldA)^{-1}\boldb.
\end{align}
Multiplying equation (\ref{volt_series_1}) on the left by \boldc\ gives the desired result in terms of the interpolation conditions on $\sigma_j$.  For the interpolation conditions in the points $\mu_j$, observe that precisely the same construction of the columns of \boldWr\ follows from the proof given above applied to the equation
\begin{equation*}
\boldWr\mathbf{M}-\boldA^T\boldWr-\boldN^T\boldWr\mathbf{S}=\mathbf{c}^T\bolde
\end{equation*}
Now $\mathbf{P}^T=\boldWr(\boldVr^T\boldWr)^{-1}\boldV^T$ is a skew projection onto the range of \boldWr, and 
\begin{align*}
&\mathbf{P}^T(\boldWr\mathbf{M}-\boldA^T\boldWr-\boldN^T\boldWr\mathbf{S}-\boldc^T\bolde^T)\\
&=\boldWr(\boldVr^T\boldWr)^{-1}((\boldVr^T\boldWr)\mathbf{M}-\boldAr^T(\boldVr^T\boldWr)-\boldNr^T(\boldVr^T\boldWr)\mathbf{S}-\boldcr^T\bolde^T)\\
&=\mathbf{0}
\end{align*}
Since $\boldWr(\boldVr^T\boldWr)^{-1}$ is full rank, this implies that $\mathbf{\Xi}=\boldVr^T\boldWr \in \reals^{r\times r}$ solves
\begin{equation*}
\mathbf{\Xi}\mathbf{M}-\boldAr^T\mathbf{\Xi}-\boldNr^T\mathbf{\Xi}\mathbf{S}^T-\boldcr^T\bolde^T=\mathbf{0}
\end{equation*}
Again, by the construction given above, the columns $\boldsymbol\xi_j \in \reals^r$  of $\mathbf{\Xi}$ for $j=1,\dots, r$ can be represented as 
\[\mathbf{\boldsymbol\xi}_j=\sum \limits_{k=1}^{\infty} \sum\limits_{l_1}^r 
%\sum \limits_{l_2}^r 
\cdots \sum \limits_{l_{k-1}}^r \widehat{\eta}_{ _{l_1, \dots, l_{k-1},j}}(\mu_j\boldI_r-\boldAr^T)^{-1}\boldNr^T(\mu_{l_{k-1}}\boldI_r-\boldAr^T)^{-1}\boldNr^T\cdots \boldNr^T(\mu_{l_1}\boldI_r-\boldAr^T)^{-1}\boldcr^T\]
And therefore 
\begin{align}\label{volt_series_2}
&\boldWr(\boldVr^T\boldWr)^{-1}\xi_j=\widetilde{\mathbf{w}}_j \nonumber\\
=&\sum \limits_{k=1}^{\infty} \sum\limits_{l_1}^r
 %\sum \limits_{l_2}^r 
 \cdots \sum \limits_{l_{k-1}}^r \widehat{\eta}_{ _{l_1, \dots, l_{k-1},j}}\boldWr(\boldVr^T\boldWr)^{-1}(\mu_j\boldI_r-\boldAr^T)^{-1}\boldNr^T\nonumber\\
&\hspace{4cm}\times (\mu_{l_{k-1}}\boldI_r-\boldAr^T)^{-1}\boldNr^T\cdots \boldNr^T(\mu_{l_1}\boldI_r-\boldAr^T)^{-1}\boldcr^T\nonumber\\
=&\sum \limits_{k=1}^{\infty} \sum\limits_{l_1}^r 
%\sum \limits_{l_2}^r
 \cdots \sum \limits_{l_{k-1}}^r \widehat{\eta}_{ _{l_1, \dots, l_{k-1},j}}(\mu_j\boldI-\boldA^T)^{-1}\boldN^T(\mu_{l_{k-1}}\boldI-\boldA^T)^{-1}\boldN^T \cdots \boldN^T(\mu_{l_1}\boldI-\boldA^T)^{-1}\boldc^T,
\end{align}
for $j=1, \dots, r$.  Taking the transpose of these equations and multiplying on the right by $\boldb$ yields the desired result for the interpolation points in $\mu_j$ and weights in $\mathbf{S}$.
\end{proof}

Theorem \ref{thm:volt_series_interp} shows how to construct a reduced bilinear system to solve the interpolation problem for the underlying Volterra series. Next, we connect this new interpolation framework to optimal approximation in the $\mathcal{H}_2$  norm.

\section{$\mathcal{H}_2$ optimal model reduction of bilinear systems} \label{sec:volterra_h2}
In this section we consider the $\mathcal{H}_2$ optimal model reduction problem and its solution.  Given an $n$ dimensional bilinear system $\zeta$, the $\mathcal{H}_2$ optimal model reduction problem for a given $r<n$ is to find the $r$-dimensional bilinear system $\widehat{\zeta}$ that satisifes

\begin{equation}
\widehat{\zeta}=\arg \min\limits_{\|\widetilde{\zeta}\|_{\mathcal{H}_2}<\infty } \|\zeta-\widetilde{\zeta}\|_{\mathcal{H}_2}
\end{equation}

Generalized Sylvester equation based first-order necessary conditions for $\mathcal{H}_2$ optimality were first derived by Lam and Zhang \cite{zhang2002h2}.  An alternative, but equivalent derivation was then given by Breiten and Benner in \cite{Breiten_H2}.  Their formulation of the necessary conditions for $\mathcal{H}_2$ optimality are obtained by taking derivatives of the $\mathcal{H}_2$ error expression with respect to the reduced order realization parameters.  Their results are summarized in the following theorem.

\begin{thm}[First-order necessary conditions for $\mathcal{H}_2$ optimality \cite{Breiten_H2}]\label{thm:bilinear_H2_opt_breiten}
Let the reduced bilinear model $\widetilde{\zeta}:=(\boldAr, \boldNr_1,\ldots,\boldNr_m, \boldBr, \boldCr)$ of dimension $r$ be a locally $\mathcal{H}_2$ optimal  approximation to the full-dimensional system ${\zeta}:=(\boldA, \boldN_1,\ldots,\boldN_m, \boldB, \boldC)$.  Let  $\mathbf{R} \widetilde{\mathbf{\Lambda}} \mathbf{R}^{-1}$ be the spectral decomposition of $\boldAr$, and define $\widehat{\boldB}=\boldBr^T\mathbf{R}^{-T}$, $\widehat{\boldC}=\boldCr\mathbf{R}$, $\widehat{\boldN}_k=\mathbf{R}^T(\boldNr)^T\mathbf{R}^{-T}$ for $k=1,\dots, m$.  
Moreover, let $\widetilde \bolde_i$ denote the $ith$ unit vector of length $r$, and $\bolde_i$ be the $ith$ unit vector whose length can be deduced from the context.
Then $\widetilde{\zeta}$ satisfies the following conditions:
For all $i=1,\dots,p$ and $j=1,\dots,r$,
\begin{align}
vec&(\boldI_p)^T(\bolde_i\widetilde{\bolde}_j^T\otimes \boldC)\bigg(-\widetilde{\mathbf{\Lambda}}\otimes \boldI_n-\boldI_r\otimes\boldA-\sum \limits_{k=1}^m \widehat{\boldN}_k^T\otimes\boldN_k\bigg)^{-1}(\widehat{\boldB}^T\otimes \boldB)vec(\boldI_m)\label{c_derivative_breiten}\\
&=vec(\boldI_p)^T(\bolde_i\widetilde\bolde_j^T\otimes \boldCr)\bigg(-\widetilde{\mathbf{\Lambda}}\otimes \boldI_r-\boldI_r\otimes\boldAr-\sum \limits_{k=1}^m (\widehat{\boldN}_k)^T\otimes\boldNr_k\bigg)^{-1}(\widehat{\boldB}^T\otimes \boldBr)vec(\boldI_m); \nonumber
\end{align}
\vspace{1ex}
for all $i=1,\dots,m$ and $j=1,\dots,r$,
\begin{align}
vec&(\boldI_p)^T(\widehat{\boldC}\otimes \boldC)\bigg(-\widetilde{\mathbf{\Lambda}}\otimes \boldI_n-\boldI_r\otimes\boldA-\sum \limits_{k=1}^m \widehat{\boldN}_k^T\otimes\boldN_k\bigg)^{-1}(\widetilde{\bolde}_j\bolde_i^T\otimes \boldB)vec(\boldI_m)\\
&=vec(\boldI_p)^T(\widehat{\boldC}\otimes \boldCr)\bigg(-\widetilde{\mathbf{\Lambda}}\otimes \boldI_r-\boldI_r\otimes\boldAr-\sum \limits_{k=1}^m (\widehat{\boldN}_k)^T\otimes\boldNr_k\bigg)^{-1}(\widetilde\bolde_j\bolde_i^T\otimes \boldBr)vec(\boldI_m);\nonumber
\end{align}
\vspace{1ex}
for all $i=1,\dots,r$,
\begin{align}
vec(\boldI_p)^T(\widehat{\boldC}\otimes \boldC)&\bigg(-\widetilde{\mathbf{\Lambda}}\otimes \boldI_n-\boldI_r\otimes\boldA-\sum \limits_{k=1}^m \widehat{\boldN}_k^T\otimes\boldN_k\bigg)^{-1}\nonumber\\
\times(\widetilde\bolde_i&\widetilde\bolde_i^T\otimes\boldI_n)\bigg(-\widetilde{\mathbf{\Lambda}}\otimes \boldI_n-\boldI_r\otimes\boldA-\sum \limits_{k=1}^m \widehat{\boldN}_k^T\otimes\boldN_k\bigg)^{-1}(\widehat{\boldB}^T\otimes \boldB)vec(\boldI_m)\nonumber\\
=vec(\boldI_p)^T(\widehat{\boldC}\otimes &\boldCr)\bigg(-\widetilde{\mathbf{\Lambda}}\otimes \boldI_r-\boldI_r\otimes\boldAr-\sum \limits_{k=1}^m (\widehat{\boldN}_k)^T\otimes\boldNr_k\bigg)^{-1}\label{H2_derivative_kronecker}\\
\times(\widetilde\bolde_i\widetilde\bolde_i^T &\otimes\boldI_r)\bigg(-\widetilde{\mathbf{\Lambda}}\otimes \boldI_r-\boldI_r\otimes\boldAr-\sum \limits_{k=1}^m (\widehat{\boldN}_k)^T\otimes\boldNr_k\bigg)^{-1}(\widehat{\boldB}^T\otimes \boldBr)vec(\boldI_m); 
\nonumber
\end{align} 
\vspace{1ex}
and, for $k=1,\ldots,m$, and for $i,j =1,\dots,r$,
\begin{align}
vec(\boldI_p)^T(\widehat{\boldC}\otimes \boldC)&\bigg(-\widetilde{\mathbf{\Lambda}}\otimes \boldI_n-\boldI_r\otimes\boldA-\sum \limits_{k=1}^m \widehat{\boldN}_k^T\otimes\boldN_k\bigg)^{-1}\nonumber\\
\times(\bolde_j&\bolde_i^T\otimes\boldN_k)\bigg(-\widetilde{\mathbf{\Lambda}}\otimes \boldI_n-\boldI_r\otimes\boldA-\sum \limits_{k=1}^m \widehat{\boldN}_k^T\otimes\boldN_k\bigg)^{-1}(\widehat{\boldB}^T\otimes \boldB)vec(\boldI_m)\nonumber\\
=vec(\boldI_p)^T(\widehat{\boldC}\otimes &\boldCr)\bigg(-\widetilde{\mathbf{\Lambda}}\otimes \boldI_r-\boldI_r\otimes\boldAr-\sum \limits_{k=1}^m \widehat{\boldN}_k^T\otimes\boldNr_k\bigg)^{-1}  \\
\times(\widetilde\bolde_j\widetilde\bolde_i^T&\otimes\boldNr_k)\bigg(-\widetilde{\mathbf{\Lambda}}\otimes \boldI_r-\boldI_r\otimes\boldAr-\sum \limits_{k=1}^m (\widehat{\boldN}_k)^T\otimes\boldNr_k\bigg)^{-1}(\widehat{\boldB}^T\otimes \boldBr)vec(\boldI_m).
\nonumber
\end{align}
\end{thm}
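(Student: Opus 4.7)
The plan is to obtain the four conditions by setting to zero the gradient of the squared $\mathcal{H}_2$ error with respect to the reduced-order realization parameters. Expanding
\[
\|\zeta-\widetilde\zeta\|^2_{\mathcal{H}_2} = \|\zeta\|^2_{\mathcal{H}_2} - 2\langle \zeta,\widetilde\zeta\rangle_{\mathcal{H}_2} + \|\widetilde\zeta\|^2_{\mathcal{H}_2},
\]
only the last two terms depend on $\widetilde\zeta$, so first-order stationarity requires
\[
\tfrac{1}{2}\tfrac{\partial}{\partial\theta}\|\widetilde\zeta\|^2_{\mathcal{H}_2} \;=\; \tfrac{\partial}{\partial\theta}\langle\zeta,\widetilde\zeta\rangle_{\mathcal{H}_2}
\]
for each scalar reduced-order parameter $\theta$ ranging over the entries of $\boldCr$, $\boldBr$, $\boldAr$, and $\boldNr_k$. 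This matches the pattern in (\ref{c_derivative_breiten})--(\ref{H2_derivative_kronecker}), in which the left-hand side encodes a piece of $\partial\langle\zeta,\widetilde\zeta\rangle_{\mathcal{H}_2}/\partial\theta$ and the right-hand side encodes the corresponding piece of $\frac{1}{2}\partial\|\widetilde\zeta\|^2_{\mathcal{H}_2}/\partial\theta$.

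The key preparatory step is to obtain a differentiable closed-form expression for $\langle\zeta,\widetilde\zeta\rangle_{\mathcal{H}_2}$. Starting from the gramian-based formulation in (\ref{norm_equivalence}) applied to the coupling of $\zeta$ with $\widetilde\zeta$, the infinite cascade of Sylvester-type equations sums to a single generalized Sylvester equation coupling $\boldA, \boldAr$, the $\boldN_k$, and the $\boldNr_k$. Applying the spectral decomposition $\boldAr=\mathbf{R}\widetilde{\mathbf{\Lambda}}\mathbf{R}^{-1}$ to diagonalize the $\boldAr$-dependence, introducing $\widehat{\boldB},\widehat{\boldC},\widehat{\boldN}_k$, and then vectorizing produces the Kronecker-sum resolvent
\[
\mathcal{M}^{-1} := \Bigl(-\widetilde{\mathbf{\Lambda}}\otimes\boldI_n-\boldI_r\otimes\boldA-\sum_{k=1}^m\widehat{\boldN}_k^T\otimes\boldN_k\Bigr)^{-1}
\]
that underlies every condition. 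The inner product can then be written as the scalar quadratic form
\[
\langle\zeta,\widetilde\zeta\rangle_{\mathcal{H}_2} = \text{vec}(\boldI_p)^T(\widehat{\boldC}\otimes\boldC)\,\mathcal{M}^{-1}\,(\widehat{\boldB}^T\otimes\boldB)\,\text{vec}(\boldI_m),
\]
with the analogous formula for $\|\widetilde\zeta\|^2_{\mathcal{H}_2}$ obtained by replacing the full-dimensional factors $\boldA,\boldB,\boldC,\boldN_k$ by their reduced counterparts. Working in the modal basis is what makes later differentiation tractable, because variations of $\widetilde{\mathbf{\Lambda}}$ correspond to rank-one perturbations of $\mathcal{M}$.

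The four conditions now follow by term-by-term differentiation of these two scalar formulas. Derivatives with respect to entries of $\boldCr$ and $\boldBr$ act linearly on the outer factors and immediately produce the unit-vector slots $\bolde_i\widetilde{\bolde}_j^T$ and $\widetilde{\bolde}_j\bolde_i^T$ appearing in the first two conditions. For the derivatives with respect to $\widetilde{\mathbf{\Lambda}}$ and $\boldNr_k$ (equivalently $\widehat{\boldN}_k$), the identity $d(\mathcal{M}^{-1}) = -\mathcal{M}^{-1}(d\mathcal{M})\mathcal{M}^{-1}$ inserts the characteristic ``sandwich'' structure of (\ref{H2_derivative_kronecker}), where the slot matrices $\widetilde{\bolde}_i\widetilde{\bolde}_i^T\otimes\boldI_n$ and $\widetilde{\bolde}_j\widetilde{\bolde}_i^T\otimes\boldN_k$ arise directly from $\partial\mathcal{M}/\partial\widetilde\lambda_i$ and $\partial\mathcal{M}/\partial(\widehat{\boldN}_k)_{ij}$, respectively. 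Equating the resulting full- and reduced-dimensional expressions yields each of the four stated conditions.

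The principal technical obstacle is the careful Kronecker-product bookkeeping, particularly verifying that the contribution from differentiating the similarity transform $\mathbf{R}$ itself cancels. This cancellation follows from the invariance of $\|\widetilde\zeta\|^2_{\mathcal{H}_2}$ and $\langle\zeta,\widetilde\zeta\rangle_{\mathcal{H}_2}$ under state-space similarity together with the smooth dependence of $\mathbf{R}$ on $\boldAr$ near a diagonalizable minimizer. Since the theorem is due to Breiten and Benner, we would refer to \cite{Breiten_H2} for the full computational details.
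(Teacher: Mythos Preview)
The paper does not actually prove this theorem: it is stated as a result of Breiten and Benner and simply cited from \cite{Breiten_H2}, with no proof given in the paper itself. Your sketch---writing $\|\zeta-\widetilde\zeta\|_{\mathcal{H}_2}^2$ via the vectorized generalized Sylvester equation, diagonalizing $\boldAr$, and differentiating the resulting Kronecker-form scalar with respect to the entries of $\widehat{\boldC}$, $\widehat{\boldB}$, $\widetilde{\mathbf{\Lambda}}$, and $\widehat{\boldN}_k$ using $d(\mathcal{M}^{-1})=-\mathcal{M}^{-1}(d\mathcal{M})\mathcal{M}^{-1}$---is exactly the approach of \cite{Breiten_H2}, and you rightly defer to that reference for the detailed computations.

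It is worth noting that the present paper does carry out precisely this style of argument in full for the truncated analogue (Theorem~\ref{h2conditions_truncated}, proved in Appendix~\ref{appendix_proof}), differentiating the finite-sum error $E_N$ term by term and using the permutation matrix $\mathbf{M}$ of (\ref{permutation_matrix}) together with Lemma~\ref{lemma:product_rule} to decouple the full- and reduced-dimensional blocks; the remark following Theorem~\ref{h2conditions_truncated} then observes that the $N\to\infty$ limit recovers Theorem~\ref{thm:bilinear_H2_opt_breiten}. So your outline is consistent both with the original source and with the paper's own closely related derivation.
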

Based on Theorem \ref{thm:bilinear_H2_opt_breiten},
Benner and Breiten in \cite{Breiten_H2} have developed the Bilinear Iterative Rational Krylov Algorithm (B-IRKA); an iterative algorithm, which, upon convergence, produces a reduced bilinear system satisfying the first-order necessary conditions for $\mathcal{H}_2$ optimality given in Theorem \ref{thm:bilinear_H2_opt_breiten}. B-IRKA has successfully extended the Iterative Rational Krylov Algorithm (IRKA) of \cite{H2} for optimal-$\mathcal{H}_2$ approximation of linear systems to bilinear systems. 
B-IRKA has produced high-fidelity reduced models, outperformed  the balancing-based bilinear  reduction methods, and has become the method of choice in most cases; for details on B-IRKA, we refer the reader to the original source \cite{Breiten_H2}.  A brief sketch of B-IRKA is given below in Algorithm \ref{alg:BIRKA}.
\begin{figure}[h]
%\vspace*{.1in}
\begin{center}
    \framebox[5.125in][t]{
    \begin{minipage}[c]{5.0in}
    {%\small

\begin{algorithm}[Bilinear Iterative Rational Krylov Algorithm (B-IRKA) \cite{Breiten_H2}]
\label{alg:BIRKA}
\\
\vspace{-0.7ex}

\textbf{Input}: \boldA, $\boldN_k$ for $k=1\ldots m$, \boldB, \boldC, \boldAr, $\boldNr_k$ for $k=1,\dots, m$, \boldBr, \boldCr\\
\textbf{Output}: $\boldAr^{\text{opt}}$, $\boldNr_k^{\text{opt}}$ for $k=1, \dots, m$, $\boldBr^{\text{opt}}$, $\boldCr^{\text{opt}}$
\begin{enumerate}
\item \textbf{While}: Change in $\widetilde{\mathbf{\Lambda}} >\epsilon$ do:
\item $\mathbf{R}\widetilde{\mathbf{\Lambda}} \mathbf{R}^{-1}=\boldAr$, $\widehat{\boldB}=\mathbf{R}^{-T}\boldBr$, $\widehat{\boldC}=\boldC \mathbf{R}$, $\widehat{\boldN}_k=\mathbf{R}^{-1}\boldNr_k\mathbf{R}$ for $k=1, \dots, m$.
\item Solve
\[\widetilde\boldV(-\widetilde{\mathbf{\Lambda}})-\boldA\widetilde\boldV-\sum \limits_{k=1}^m\boldN_k\widetilde\boldV\widehat{\boldN}_k^T=\boldB\widehat{\boldB}^T\] and 
\[\widetilde\boldW(-\widetilde{\mathbf{\Lambda}})-\boldA^T\widetilde\boldW-\sum \limits_{k=1}^m\boldN_k^T\widetilde\boldW\widehat{\boldN}_k=\boldC^T\widehat{\boldC}\]
\item $\widetilde\boldV=orth(\widetilde\boldV)$, $\widetilde\boldW=orth(\widetilde\boldW)$.
\item $\boldAr=(\widetilde\boldW^T\widetilde\boldV)^{-1}\widetilde\boldW^T\boldA\widetilde\boldV$, $\boldNr_k=(\widetilde\boldW^T\widetilde\boldV)^{-1}\widetilde\boldW^T\boldN_k\widetilde\boldV$ for $k=1, \dots, m$,\\ $\boldBr=(\widetilde\boldW^T\widetilde\boldV)^{-1}\widetilde\boldW^T\boldB$, $\widetilde{\boldC}=\boldC\widetilde\boldV$.
\item \textbf{end while}
\item $\boldAr^{\text{opt}}=\boldAr$, $\boldNr_k^{\text{opt}}=\boldNr_k$ for $k=1, \dots, m$, $\boldBr^{\text{opt}}=\boldBr$, $\boldCr^{\text{opt}}=\boldCr$
\end{enumerate}
\end{algorithm}

}
  \end{minipage}
    }
    \end{center}
  \end{figure}
  
 For $\boldN_k=\mathbf{0}$ for $k=1, \dots, m$, B-IRKA reduces to the Sylvester equation formulation of IRKA; see  \cite{benner_sylvester} for an effective implementation for the linear case using Sylvester equations.
The ability to satisfy the necessary conditions of Theorem \ref{thm:bilinear_H2_opt_breiten} by B-IRKA requires repeatedly solving the generalized Sylvester equations given in Step 3. of B-IRKA.  Unlike the linear case (i.e., when $\boldN_k=\mathbf{0}$ for $k=1, \dots, m$), solving these Sylvester equations is not always an easy task and requires solving a sequence of possibly dense linear systems of dimension $(nr)\times(nr)$, obtained by vectorizing the equations in Step 3. of B-IRKA. This means that as $r$ grows  moderately large,  say $r=30$, the computational cost per iteration of B-IRKA might become large as well. However, we want to emphasize that even with these numerical considerations, B-IRKA is the only optimal model reduction technique available for bilinear systems that is also applicable for large problems. In Section \ref{sec:TBIRKA}, we will propose a model reduction approach that performs comparably with the high quality of B-IRKA while only requiring solutions to the linear Sylvester equations, as in the case of IRKA.

\subsection{Multipoint interpolation and $\mathcal{H}_2$ optimality}
Breiten and Benner \cite{Breiten_H2} have observed that their necessary conditions
for $\mathcal{H}_2$ optimality
 are an algebraic analogue to the Sylvester equation formulation of rational interpolation conditions in the case of LTI systems \cite{Breiten_H2}.  We now present an analysis of  the necessary conditions of Theorem \ref{thm:bilinear_H2_opt_breiten} which makes an explicit connection to our multipoint Volterra series interpolation scheme.  Our analysis shows that the necessary conditions 
of Theorem \ref{thm:bilinear_H2_opt_breiten}
construed in terms of multipoint Volterra series interpolation yields rather satisfying generalizations of the interpolation-based necessary conditions originally introduced by Meier and Luenberger for $\mathcal{H}_2$ optimal approximation of LTI systems \cite{meieriii1967approximation}.

In order to obtain this result, we first prove the following lemma, which clarifies the relationship between the multi-point Volterra series interpolation conditions and the pole residue expansion of a SISO bilinear system.

\begin{lem}\label{lemma:clarified_pole_residue_expansion}
Let  $\zeta= (\boldA, \boldN, \boldb, \boldc)$ 
and  $\widetilde \zeta = (\boldAr, \boldNr, \boldbr, \boldcr)$ be SISO bilinear systems of dimension $n$ and $r$, respectively.
 Let  $\mathbf{R} \widetilde{\mathbf{\Lambda}} \mathbf{R}^{-1}$ be the spectral decomposition of $\boldAr$, and let $\widehat{\boldb}=\mathbf{R}^{-1}\boldbr$, $\widehat{\boldc}=\boldcr\,\mathbf{R}$, $\widehat{\boldN}=\mathbf{R}^{-1}\boldNr\mathbf{R}$.  Moreover, let the residues $\widetilde{\phi}_{l_1,\dots,l_k}$ for $k=1,\dots,\infty$ and $l_k=1,\dots,r$ of the transfer functions $\widetilde{H}_k(s_1,\dots,s_k)$ corresponding to the $kth$ order homogeneous subsystems of $\widetilde{\zeta}$ be defined as in Definition \ref{dfn:residues}.  Let \boldVr solve
\begin{align*}
\boldVr(-\widetilde{\mathbf{\Lambda}})-\boldA\boldVr-\boldN\boldVr\widehat{\boldN}^T=\boldb\widehat{\boldb}^T.\\
\end{align*}
Then
\begin{align}
&\widehat{\boldc}(\boldc\boldVr)^T
=\sum \limits_{k=1}^{\infty}\sum \limits_{l_1=1}^{r}\cdots\sum\limits_{l_{k}=1}^r \widetilde{\phi}_{ _{l_1,\dots,l_{k}}}H_k(-\widetilde{\lambda}_{l_1},\dots,-\widetilde{\lambda}_{k})\label{H2_sum_condition_lemma}
\end{align}
\end{lem}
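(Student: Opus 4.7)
The plan is to first expand the left-hand side $\widehat{\boldc}(\boldc\boldVr)^T$ as an explicit double series of full-order transfer-function evaluations weighted by products of entries of $\widehat{\boldN}$, $\widehat{\boldb}$, and $\widehat{\boldc}$, and then to identify those products with the residues $\widetilde\phi_{l_1,\dots,l_k}$ of the reduced $k$th-order transfer functions $\widetilde H_k$ by diagonalising the reduced system via $\mathbf{R}$.

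For the first step, I would adapt the inductive argument used in the proof of Lemma \ref{lemma:sylvester_equation_solution}. Writing $\widehat{\boldb}^T=[\widehat{b}_1,\dots,\widehat{b}_r]$ and noting that the governing Sylvester equation is exactly of the form covered by Lemma \ref{lemma:sylvester_equation_solution} with $\mathbf{\Lambda}\leftarrow-\widetilde{\mathbf{\Lambda}}$, $\mathbf{U}\leftarrow\widehat{\boldN}$, and the right-hand side $\boldb\widehat{\boldb}^T$ in place of $\boldb\bolde^T$, the same recursion yields for the $j$th column of $\boldVr$
\[
\mathbf{v}_j=\sum_{k=1}^{\infty}\sum_{l_1,\dots,l_{k-1}=1}^r \widehat{N}_{j,l_{k-1}}\widehat{N}_{l_{k-1},l_{k-2}}\cdots\widehat{N}_{l_2,l_1}\widehat{b}_{l_1}(-\widetilde\lambda_j\boldI-\boldA)^{-1}\boldN\cdots\boldN(-\widetilde\lambda_{l_1}\boldI-\boldA)^{-1}\boldb,
\]
the only novelty being that the seed $\mathbf v_{1,l_1}=\widehat{b}_{l_1}(-\widetilde\lambda_{l_1}\boldI-\boldA)^{-1}\boldb$ now carries the factor $\widehat{b}_{l_1}$ inherited from the right-hand side. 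Multiplying on the left by $\boldc$ converts each resolvent chain into a transfer-function evaluation, and summing $\widehat{c}_j\,\boldc\,\mathbf v_j$ over $j$ (relabelling $j$ as $l_k$) produces
\[
\widehat{\boldc}(\boldc\boldVr)^T=\sum_{k=1}^{\infty}\sum_{l_1,\dots,l_k=1}^r \widehat{c}_{l_k}\widehat{N}_{l_k,l_{k-1}}\cdots\widehat{N}_{l_2,l_1}\widehat{b}_{l_1}\,H_k(-\widetilde\lambda_{l_1},\dots,-\widetilde\lambda_{l_k}).
\]

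For the second step, I would use the similarity transformation to diagonalise the reduced system. Since $\boldAr=\mathbf R\widetilde{\mathbf\Lambda}\mathbf R^{-1}$, inserting $(s\boldI-\boldAr)^{-1}=\mathbf R(s\boldI-\widetilde{\mathbf\Lambda})^{-1}\mathbf R^{-1}$ into the definition of $\widetilde H_k$ and collapsing the telescope of $\mathbf R\mathbf R^{-1}$ factors gives
\[
\widetilde H_k(s_1,\dots,s_k)=\widehat{\boldc}(s_k\boldI-\widetilde{\mathbf\Lambda})^{-1}\widehat{\boldN}(s_{k-1}\boldI-\widetilde{\mathbf\Lambda})^{-1}\widehat{\boldN}\cdots\widehat{\boldN}(s_1\boldI-\widetilde{\mathbf\Lambda})^{-1}\widehat{\boldb}.
\]
Because $\widetilde{\mathbf\Lambda}$ is diagonal this expands immediately into the partial-fraction form
\[
\widetilde H_k(s_1,\dots,s_k)=\sum_{l_1,\dots,l_k=1}^r \frac{\widehat{c}_{l_k}\widehat{N}_{l_k,l_{k-1}}\cdots\widehat{N}_{l_2,l_1}\widehat{b}_{l_1}}{\prod_{i=1}^k(s_i-\widetilde\lambda_{l_i})},
\]
and Definition \ref{dfn:residues}, together with the uniqueness of the partial-fraction expansion supplied by Theorem \ref{thm:pole_residue}, identifies the numerator as $\widetilde\phi_{l_1,\dots,l_k}$. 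Substituting this into the series obtained in the first step yields exactly (\ref{H2_sum_condition_lemma}).

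The main subtlety is purely combinatorial bookkeeping: the row/column convention on the entries of $\widehat{\boldN}$ that falls out of the Sylvester recursion must be matched with the ordering that appears when residues are read off the spectral expansion of $\widetilde H_k$. Once one is careful to track that the chain of $\widehat{N}$ factors produced by successive applications of $(\boldN\boldV^{(k-1)}\widehat{\boldN}^T)_{*,j}$ is indexed in the same sense as the resolvent chain defining $\widetilde H_k$, the two expressions line up term by term and the proof reduces to matching indices.
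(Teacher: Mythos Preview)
Your proposal is correct and follows essentially the same route as the paper: expand the columns of $\boldVr$ via the iterative Sylvester construction of Lemma~\ref{lemma:sylvester_equation_solution} (with the seed modified by the factor $\widehat b_{l_1}$ coming from the right-hand side $\boldb\widehat{\boldb}^T$), multiply by $\boldc$ and then by $\widehat{\boldc}$, and identify the resulting coefficients $\widehat c_{l_k}\widehat N_{l_k,l_{k-1}}\cdots\widehat N_{l_2,l_1}\widehat b_{l_1}$ with the residues $\widetilde\phi_{l_1,\dots,l_k}$ by uniqueness of the pole--residue decomposition. If anything, your second step is more explicit than the paper's, which simply invokes ``tracing the terms of a matrix-vector product by their indices together with the uniqueness of the pole--residue decomposition,'' whereas you actually carry out the diagonalisation of $\widetilde H_k$ to read off the residues.
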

\begin{proof}
Let $\boldU=\widehat{\boldN}$, $\mathbf{r}=\widehat{\boldb}$ and $\sigma_j=-\widetilde{\lambda}_j$ for $j=1,\dots, r$.  By applying the construction of the columns of \boldVr\ given in the proof of Theorem \ref{thm:volt_series_interp}, we have that
\begin{align}
\boldc\boldVr(:,j)&=\sum \limits_{k=1}^{\infty} \sum\limits_{l_1}^r \sum \limits_{l_2}^r \cdots \sum \limits_{l_{k-1}}^r \eta_{ _{l_1,l_2, \dots, l_{k-1},j}}\widehat{\boldb}_{l_1}H_k(-\widetilde{\lambda}_{l_1},-\widetilde{\lambda}_{l_2}, \dots, -\widetilde{\lambda}_j),\nonumber\\
\end{align}
where $\eta_{l_1,\dots,l_{k-1},j}=u_{j,l_{k-1}}u_{l_{k-1},l_{k-2}}\cdots u_{l_2,l_1} \text{ for } k\ge 2\text{ and }\eta_{l_1}=1 \text{ for }l_1=1, \dots, r$.
Now for each $j=1,\dots,r$, observe that by the definition of $\eta_{l_1,\dots,l_{k-1},j}$, for $k\ge2$
\begin{align}
\eta_{l_1,\dots,l_{k-1},j}\widehat{\boldb}_{l_1}=\widehat\boldN(j,l_{k-1})\widehat\boldN(l_{k-1},l_{k-2})\cdots\widehat\boldN(l_2,l_1)\widehat\boldb_{l_1}.
\end{align}
Therefore
\begin{align} 
&\sum \limits_{k=1}^{\infty} \sum\limits_{l_1}^r \sum \limits_{l_2}^r \cdots \sum \limits_{l_{k-1}}^r \eta_{ _{l_1,l_2, \dots, l_{k-1},j}}\widehat{\boldb}_{l_1}H_k(-\widetilde{\lambda}_{l_1},-\widetilde{\lambda}_{l_2}, \dots, -\widetilde{\lambda}_j)\nonumber\\
=&\sum \limits_{k=1}^{\infty} \sum\limits_{l_1}^r \sum \limits_{l_2}^r \cdots \sum \limits_{l_{k-1}}^r\widehat\boldN(j,l_{k-1})\widehat\boldN(l_{k-1},l_{k-2})\cdots\widehat\boldN(l_2,l_1)\widehat{\boldb}_{l_1}H_k(-\widetilde{\lambda}_{l_1},-\widetilde{\lambda}_{l_2}, \dots, -\widetilde{\lambda}_j)\nonumber\\
\end{align}
for $j=1,\dots,r$.
Hence,
\begin{align}
(\boldc\boldVr)^T=\begin{bmatrix}\sum \limits_{k=1}^{\infty} \sum\limits_{l_1}^r \sum \limits_{l_2}^r \cdots \sum \limits_{l_{k-1}}^r\widehat\boldN(1,l_{k-1})\widehat\boldN(l_{k-1},l_{k-2})\cdots\widehat\boldN(l_2,l_1)\widehat{\boldb}_{l_1}H_k(-\widetilde{\lambda}_{l_1},-\widetilde{\lambda}_{l_2}, \dots, -\widetilde{\lambda}_1)\\
\sum \limits_{k=1}^{\infty} \sum\limits_{l_1}^r \sum \limits_{l_2}^r \cdots \sum \limits_{l_{k-1}}^r\widehat\boldN(2,l_{k-1})\widehat\boldN(l_{k-1},l_{k-2})\cdots\widehat\boldN(l_2,l_1)\widehat{\boldb}_{l_1}H_k(-\widetilde{\lambda}_{l_1},-\widetilde{\lambda}_{l_2}, \dots, -\widetilde{\lambda}_2)\\
\vdots\\
\sum \limits_{k=1}^{\infty} \sum\limits_{l_1}^r \sum \limits_{l_2}^r \cdots \sum \limits_{l_{k-1}}^r\widehat\boldN(r,l_{k-1})\widehat\boldN(l_{k-1},l_{k-2})\cdots\widehat\boldN(l_2,l_1)\widehat{\boldb}_{l_1}H_k(-\widetilde{\lambda}_{l_1},-\widetilde{\lambda}_{l_2}, \dots, -\widetilde{\lambda}_r)
\end{bmatrix}. \label{eq:cV}
\end{align}
To complete the proof, we multiply the right-hand side of (\ref{eq:cV}) by $\widehat \boldc =\begin{bmatrix}\widehat\boldc_1&\widehat\boldc_2&\dots,\widehat\boldc_r \end{bmatrix}$ where 
$\widehat \boldc_i \in \reals$, for $i=1,\ldots,r$ denote the entries of $\widehat \boldc$.
Then, tracing the terms of a matrix vector products by their indices, together with the fact that the pole-residue decomposition of the $kth$ order transfer functions is unique yields
\begin{align}
&\begin{bmatrix}\widehat\boldc_1&\widehat\boldc_2&\dots,\widehat\boldc_r\end{bmatrix} (\boldc\boldVr)^T \nonumber \\
& = \begin{bmatrix}\widehat\boldc_1&\widehat\boldc_2&\dots,\widehat\boldc_r\end{bmatrix} \nonumber\\ 
&\qquad\times \begin{bmatrix}\sum \limits_{k=1}^{\infty} \sum\limits_{l_1}^r \sum \limits_{l_2}^r \cdots \sum \limits_{l_{k-1}}^r\widehat\boldN(1,l_{k-1})\widehat\boldN(l_{k-1},l_{k-2})\cdots\widehat\boldN(l_2,l_1)\widehat{\boldb}_{l_1}H_k(-\widetilde{\lambda}_{l_1},-\widetilde{\lambda}_{l_2}, \dots, -\widetilde{\lambda}_1)\\
\sum \limits_{k=1}^{\infty} \sum\limits_{l_1}^r \sum \limits_{l_2}^r \cdots \sum \limits_{l_{k-1}}^r\widehat\boldN(2,l_{k-1})\widehat\boldN(l_{k-1},l_{k-2})\cdots\widehat\boldN(l_2,l_1)\widehat{\boldb}_{l_1}H_k(-\widetilde{\lambda}_{l_1},-\widetilde{\lambda}_{l_2}, \dots, -\widetilde{\lambda}_2)\\
\vdots\\
\sum \limits_{k=1}^{\infty} \sum\limits_{l_1}^r \sum \limits_{l_2}^r \cdots \sum \limits_{l_{k-1}}^r\widehat\boldN(r,l_{k-1})\widehat\boldN(l_{k-1},l_{k-2})\cdots\widehat\boldN(l_2,l_1)\widehat{\boldb}_{l_1}H_k(-\widetilde{\lambda}_{l_1},-\widetilde{\lambda}_{l_2}, \dots, -\widetilde{\lambda}_r)\nonumber
\end{bmatrix}\\
=&\sum \limits_{k=1}^{\infty}\sum \limits_{l_1=1}^{r}\cdots\sum\limits_{l_{k}=1}^r \widetilde{\phi}_{ _{l_1,\dots,l_{k}}}H_k(-\widetilde{\lambda}_{l_1},\dots,-\widetilde{\lambda}_{k}), \label{eq:ctildecV}
\end{align}
concluding the proof.
\end{proof}

 Using Lemma \ref{lemma:clarified_pole_residue_expansion}, we now show that the $\mathcal{H}_2$ optimal necessary conditions  imply multipoint Volterra series interpolation conditions with weights given by the reduced order residues and interpolation points by the reflection of the poles of the reduced order transfer functions across the imaginary axis.

\begin{thm}\label{thm:H2_sum_condition}
Let $\zeta$ be a SISO system of dimension $n$, and let $\widetilde{\zeta}=(\boldAr,\boldNr,\boldbr,\boldcr)$ be an $\mathcal{H}_2$ optimal approximation of $\zeta$ of dimension $r$.  
Then $\widetilde{\zeta}$ satisfies the following multipoint Volterra series interpolation conditions.
\begin{align}\label{H2_sum_condition}
\sum \limits_{k=1}^{\infty}\sum \limits_{l_1=1}^{r}\cdots\sum\limits_{l_{k}=1}^r \widetilde{\phi}_{ _{l_1,\dots,l_{k}}}H_k(-\widetilde{\lambda}_{l_1},\dots,-\widetilde{\lambda}_{k})\nonumber\\
=\sum \limits_{k=1}^{\infty}\sum \limits_{l_1=1}^{r}\cdots\sum\limits_{l_{k-1}=1}^r \widetilde{\phi}_{ _{l_1,\dots,l_{k}}}\widetilde{H}_k(-\widetilde{\lambda}_{l_1},\dots,-\widetilde{\lambda}_k ),
\end{align}
and
\begin{align}\label{H2_sum_partials_condition}
\sum \limits_{k=1}^{\infty}\sum \limits_{l_1=1}^{r}\cdots\sum\limits_{l_{k}=1}^r \widetilde{\phi}_{ _{l_1,\dots,l_{k}}}\Big( \sum \limits_{j=1}^{k} \frac{\partial}{\partial s_j}H_k(-\widetilde{\lambda}_{l_1},\dots,-\widetilde{\lambda}_{k})\Big)\nonumber\\
=\sum \limits_{k=1}^{\infty}\sum \limits_{l_1=1}^{r}\cdots\sum\limits_{l_k=1}^r \widetilde{\phi}_{ _{l_1,\dots,l_k}} \Big(\sum \limits_{j=1}^{k}  \frac{\partial}{\partial s_j}\widetilde{H}_k(-\widetilde{\lambda}_{l_1},\dots,-\widetilde{\lambda}_{l_{k}} )\Big)
\end{align}
where $\widetilde{\phi}_{ _{l_1,\dots,l_k}}$, and $\widetilde{\lambda}_{l_i}$ are, respectively, the residues and poles of the transfer functions $\widetilde{H}_k$ associated with $\widetilde{\zeta}$,
and $\frac{\partial}{\partial s_j} {H}_k(-{\lambda}_{l_1},\dots,-{\lambda}_{l_{k}} )$ denotes the the partial derivative of  ${H}_k(s_1,\dots,s_k)$ with respect to $s_j$ evaluated at $(s_1,\ldots,s_k) = (-\lambda_{l_1},\ldots,-\lambda_{l_k})$.
\end{thm}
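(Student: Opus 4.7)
My plan is to derive both interpolation conditions directly from the Breiten--Benner first-order $\mathcal{H}_2$ necessary conditions in Theorem \ref{thm:bilinear_H2_opt_breiten}, using Lemma \ref{lemma:clarified_pole_residue_expansion} as the dictionary that converts the Kronecker-product expressions appearing there into the multipoint Volterra-series pole-residue sums on both sides of (\ref{H2_sum_condition}) and (\ref{H2_sum_partials_condition}).

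For (\ref{H2_sum_condition}), I specialize the necessary condition (\ref{c_derivative_breiten}) to the SISO setting ($p=m=1$). Vectorizing the Sylvester equation of Lemma \ref{lemma:clarified_pole_residue_expansion} expresses $\mathrm{vec}(\boldVr)$ as $\mathbf{M}^{-1}(\widehat{\boldb}\otimes\boldb)$, where $\mathbf{M} = -\widetilde{\mathbf{\Lambda}}\otimes\boldI_n - \boldI_r\otimes\boldA - \widehat{\boldN}^T\otimes\boldN$ is precisely the operator appearing in (\ref{c_derivative_breiten}). The SISO version of (\ref{c_derivative_breiten}) therefore asserts that the $j$-th entry of the row vector $\boldc\boldVr$ equals the $j$-th entry of its reduced-order analogue for every $j=1,\ldots,r$. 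Taking the inner product of each side with $\widehat{\boldc}$ gives $\widehat{\boldc}(\boldc\boldVr)^T = \widehat{\boldc}(\boldcr\boldV^{(r)})^T$, where $\boldV^{(r)}$ denotes the solution of the analogous Sylvester equation for the reduced system. Applying Lemma \ref{lemma:clarified_pole_residue_expansion} once to the full system and once to the reduced system then identifies these two scalars with, respectively, the left- and right-hand sides of (\ref{H2_sum_condition}).

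For the derivative condition (\ref{H2_sum_partials_condition}), I use the identity $\partial_{\widetilde{\lambda}_i}\mathbf{M}^{-1} = \mathbf{M}^{-1}(\widetilde{\bolde}_i\widetilde{\bolde}_i^T\otimes\boldI_n)\mathbf{M}^{-1}$ and its reduced-order counterpart to recast (\ref{H2_derivative_kronecker}) as $\partial_{\widetilde{\lambda}_i}[\widehat{\boldc}(\boldc\boldVr)^T] = \partial_{\widetilde{\lambda}_i}[\widehat{\boldc}(\boldcr\boldV^{(r)})^T]$ for each $i$. Summing over $i=1,\ldots,r$ and applying the chain rule to the pole-residue representation supplied by Lemma \ref{lemma:clarified_pole_residue_expansion} splits each side into an \emph{argument-derivative} contribution, in which $\partial_{\widetilde{\lambda}_i}$ acts on the evaluation nodes $-\widetilde{\lambda}_{l_j}$, and a \emph{residue-derivative} contribution. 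The identity $\sum_{i=1}^r \partial_{\widetilde{\lambda}_i} H_k(-\widetilde{\lambda}_{l_1},\ldots,-\widetilde{\lambda}_{l_k}) = -\sum_{j=1}^k \partial_{s_j} H_k(-\widetilde{\lambda}_{l_1},\ldots,-\widetilde{\lambda}_{l_k})$ then converts the argument-derivative part into exactly the two sides of (\ref{H2_sum_partials_condition}).

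The main obstacle is controlling the residue-derivative contributions $\sum_i \partial_{\widetilde{\lambda}_i}\widetilde{\phi}_{l_1,\ldots,l_k}$, which arise through the implicit dependence of the spectral decomposition quantities $\mathbf R$, $\widehat{\boldN}$, $\widehat{\boldb}$, and $\widehat{\boldc}$ on $\widetilde{\lambda}_i$. I expect these spurious terms to cancel between the full and reduced sides, drawing on the remaining Breiten--Benner conditions in Theorem \ref{thm:bilinear_H2_opt_breiten}---the ones obtained by differentiating with respect to $\widehat{\boldb}$ and $\widehat{\boldN}$---which should supply precisely the auxiliary matching identities needed to force this cancellation. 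Assembling these pieces is where the bulk of the technical bookkeeping will lie.
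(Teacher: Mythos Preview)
Your argument for (\ref{H2_sum_condition}) is exactly the paper's: specialize (\ref{c_derivative_breiten}) to the SISO case, recognize both sides as $\boldc\boldVr(:,j)$ and $\boldcr\boldV^{(r)}(:,j)$ via the vectorized Sylvester equation, contract with $\widehat{\boldc}$, and invoke Lemma \ref{lemma:clarified_pole_residue_expansion} on each side.

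For (\ref{H2_sum_partials_condition}) your route differs from the paper's. The paper does not differentiate anything with respect to $\widetilde{\lambda}_i$; instead it introduces the companion matrix $\boldWr$ solving $\boldWr(-\widetilde{\mathbf{\Lambda}})-\boldA^T\boldWr-\boldN^T\boldWr\widehat{\boldN}=\boldc^T\widehat{\boldc}$, observes that (\ref{H2_derivative_kronecker}) is literally $\boldWr(:,i)^T\boldVr(:,i)$ on each side, and expands this scalar as the Cauchy product of the two column series from Lemma \ref{lemma:sylvester_equation_solution}. Every term of that product carries one factor $(-\widetilde{\lambda}_i\boldI-\boldA)^{-2}$ in some slot, which is exactly $-\partial_{s_j}$ acting on the relevant resolvent; summing over $i$ collects all partial derivatives and yields (\ref{H2_sum_partials_condition}) directly.

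Your differentiation approach can also be made to work, but the ``main obstacle'' you identify is not real. In the Breiten--Benner parametrization the quantities $\widetilde{\mathbf{\Lambda}},\widehat{\boldN},\widehat{\boldb},\widehat{\boldc}$ are \emph{independent} optimization variables; the condition (\ref{H2_derivative_kronecker}) is obtained by varying $\widetilde{\lambda}_i$ while holding $\widehat{\boldN},\widehat{\boldb},\widehat{\boldc}$ fixed. Since the proof of Lemma \ref{lemma:clarified_pole_residue_expansion} shows that $\widetilde{\phi}_{l_1,\ldots,l_k}=\widehat{\boldc}_{l_k}\widehat{\boldN}(l_k,l_{k-1})\cdots\widehat{\boldN}(l_2,l_1)\widehat{\boldb}_{l_1}$, the residues are functions of $\widehat{\boldc},\widehat{\boldN},\widehat{\boldb}$ alone and carry no $\widetilde{\lambda}_i$-dependence in this parametrization. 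Hence there are no residue-derivative contributions to cancel, and you do not need the remaining necessary conditions at all; the argument-derivative part you already isolated is the whole story. The paper's $\boldWr^T\boldVr$ computation sidesteps this parametrization subtlety entirely, which is its advantage; your approach, once the spurious obstacle is removed, is arguably more conceptual.
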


\begin{proof}
Let  $\mathbf{R} \widetilde{\mathbf{\Lambda}} \mathbf{R}^{-1}$ be the spectral decomposition of $\boldAr$, and let $\widehat{\boldb}=\mathbf{R}^{-1}\boldbr$, $\widehat{\boldc}=\boldc \mathbf{R}$, $\widehat{\boldN}=\mathbf{R}^{-1}\boldNr\mathbf{R}$.  Moreover, let $\boldV$ and \boldW\ solve
\begin{align}
\boldV(-\widetilde{\mathbf{\Lambda}})-\boldA\boldV-\boldN\boldV\widehat{\boldN}^T=\boldb\widehat{\boldb}^T \label{equation1}\\
\boldW(-\mathbf{\Lambda})-\boldA^T\boldW-\boldN^T\boldW\widehat{\boldN}^T=\boldc^T\widehat{\boldc} \label{equation2}
\end{align}

By applying the $vec$ operator to equations (\ref{equation1}) and (\ref{equation2}), we have that \[vec(\boldV)=\bigg(-\widetilde{\mathbf{\Lambda}}\otimes\boldI_n-\boldI_r\otimes\boldA-\widehat{\boldN}^T\otimes\boldN\bigg)^{-1}(\widehat{\boldb}^T\otimes\boldb).\]  Thus, 
\begin{align*}
(\bolde_j^T\otimes\boldc) vec(\boldV)=\boldc\boldV(:,j)
\end{align*}
 is equivalent to the left-hand side of necessary condition (\ref{c_derivative_breiten}).  Applying Lemma \ref{lemma:clarified_pole_residue_expansion} to both sides of (\ref{c_derivative_breiten}) gives
\begin{align*}
\sum \limits_{k=1}^{\infty}\sum \limits_{l_1=1}^{r}\cdots&\sum\limits_{l_{k}=1}^r \widetilde{\phi}_{ _{l_1,\dots,l_{k}}}H_k(-\widetilde{\lambda}_{l_1},\dots,-\widetilde{\lambda}_{k})\nonumber\\
&=\sum \limits_{k=1}^{\infty}\sum \limits_{l_1=1}^{r}\cdots\sum\limits_{l_{k-1}=1}^r \widetilde{\phi}_{ _{l_1,\dots,l_{k}}}\widetilde{H}_k(-\widetilde{\lambda}_{l_1},\dots,-\widetilde{\lambda}_k ).
\end{align*}

The second equality (\ref{H2_sum_partials_condition}) follows from  condition (\ref{H2_derivative_kronecker}).  Simple algebra shows that the right-hand-side of equality (\ref{H2_derivative_kronecker}) is equivalent to the product  $\boldW(:,j)^T\boldV(:,j)$.  This is equivalent to 
{\small
\begin{align}\label{derivative_product}
&\boldW(:,j)^T\boldV(:,j) = \nonumber \\
& \Big(\sum \limits_{k=1}^{\infty}\sum \limits_{l_1=1}^r\cdots \sum\limits_{l_{k-1}=1}^{r}\widehat\boldc_{l_1}\eta_{ _{j,l_{k-1},\ldots, l_1}}\boldc(-\widetilde{\lambda}_{l_1}\boldI_n-\boldA)\boldN\cdots\boldN(-\widetilde{\lambda}_{l_{k-1}}\boldI_n-\boldA)^{-1}\boldN(-\widetilde{\lambda}_j\boldI_n-\boldA)^{-1}\Big)  \nonumber \\
& \times\Big(\sum_{k=1}^{\infty}\sum\limits_{r_1=1}^r\cdots\sum\limits_{r_{k-1}=1}^r\widehat{\boldb}_{r_1}\eta_{r_1,\ldots,r_{k-1},j}(-\widetilde{\lambda}_j\boldI_n-\boldA)^{-1}\boldN(-\widetilde{\lambda}_{r_{k-1}}\boldI_n-\boldA)^{-1}\boldN\cdots\boldN(-\widetilde{\lambda}_{r_1}\boldI_n-\boldA)^{-1}\boldb\Big). \nonumber
\end{align}
}
Expanding over the first few terms in $k$ is sufficient to establish the general pattern:
\begin{align}
\boldWr(:,j)&^T\boldVr(:, j)= \nonumber \\
& \widehat\boldc_{j}\widehat\boldb_{j}\boldc(-\widetilde{\lambda}_j\boldI_n-\boldA)^{-2}\boldb+\sum \limits_{r_1=1}^r\widehat\boldc_j\eta_{r_1,j}\widehat\boldb_{r_1}\Big(\boldc(-\widetilde{\lambda}_j\boldI_n-\boldA)^{-2}\boldN(-\widetilde{\lambda}_{r_1}\boldI_n-\boldA)^{-1}\boldb \nonumber \\
&+\sum\limits_{l_1=1}^r\widehat\boldc_{l_1}\eta_{j,l_1}\widehat\boldb_j\boldc(-\widetilde{\lambda}_{l_1}\boldI_n-\boldA)^{-1}\boldN(-\widetilde{\lambda}_{j}\boldI_n-\boldA)^{-2}\boldb\Big) +\nonumber\\
&+\sum \limits_{r_1=1}^{r}\sum \limits_{r_2=1}^r\widehat\boldc_{j}\eta_{ _{r_1,r_2,j}}\widehat\boldb_{r_1}(\boldc(-\widetilde{\lambda}_j\boldI_n-\boldA)^{-2}\boldN(-\widetilde{\lambda}_{r_2}\boldI_n-\boldA)^{-1}\boldN(-\widetilde{\lambda}_{r_1}\boldI_n-\boldA)^{-1}\boldb \nonumber\\
&+\sum \limits_{l_1=1}^{r}\sum \limits_{l_2=1}^r\widehat\boldc_{l_1}\eta_{ _{j,l_2,l_1}}\widehat\boldb_{j}(\boldc(-\widetilde{\lambda}_{l_1}\boldI_n-\boldA)^{-1}\boldN(-\widetilde{\lambda}_{l_2}\boldI_n-\boldA)^{-1}\boldN(-\widetilde{\lambda}_{j}\boldI_n-\boldA)^{-2}\boldb\nonumber\\
&+\sum \limits_{l_1=1}^{r}\sum \limits_{r_1=1}^r\widehat\boldc_{l_1}\eta_{j,l_1}\eta_{r_1,j}\widehat\boldb_{r_1}(\boldc(-\widetilde{\lambda}_{l_1}\boldI_n-\boldA)^{-1}\boldN(-\widetilde{\lambda}_j\boldI_n-\boldA)^{-2}\boldN(-\widetilde{\lambda}_{r_1}\boldI_n-\boldA)^{-1}\boldb \nonumber\\ \nonumber
&+ \dots ,
\end{align}

where the weights $\eta_{ _{r_1,r_2,j}}, \eta_{ _{j,l_2,l_1}}$ etc. are defined in (\ref{weights}), and the indices in $r_j$ and $l_j$ keep track of the cases where terms on the right are multiplied by terms on the left and vice versa in the obvious way.    The expansion of the product for the solution of the reduced order matrices follows similarly.  Thus, $\sum \limits_{j=1}^r \boldW(:,j)^T\boldV(:,j)$ gives the desired expression for the derivatives as:
\begin{align*}
\sum \limits_{k=1}^{\infty}\sum \limits_{l_1=1}^{r}\cdots\sum\limits_{l_{k}=1}^r \widetilde{\phi}_{ _{l_1,\dots,l_{k}}}\Big( \sum \limits_{j=1}^{k} \frac{\partial}{\partial s_j}H_k(-\widetilde{\lambda}_{l_1},\dots,-\widetilde{\lambda}_{k})\Big)\nonumber\\
=\sum \limits_{k=1}^{\infty}\sum \limits_{l_1=1}^{r}\cdots\sum\limits_{l_k=1}^r \widetilde{\phi}_{ _{l_1,\dots,l_k}} \Big(\sum \limits_{j=1}^{k}  \frac{\partial}{\partial s_j}\widetilde{H}_k(-\widetilde{\lambda}_{l_1},\dots,-\widetilde{\lambda}_{l_{k}} )\Big).
\end{align*} Since all the terms $j$ are equal on both sides of equation (\ref{H2_derivative_kronecker}), the second result follows.
\end{proof}

In the SISO case, the generalized Sylvester equations in Step 3. of BIRKA are a special case of the multipoint interpolation conditions presented in Theorem \ref{thm:volt_series_interp} where the interpolation points are  $-\lambda(\boldAr)$, and the weights generated by $\widehat{\boldN}_r$ are simply the residues of the $kth$ order transfer functions. Upon convergence of BIRKA, the resulting reduced order system is an $\mathcal{H}_2$ approximation satisfying the first-order necessary conditions. Hence, all of the convergence criteria associated with the corresponding Volterra series interpolation expressions are satisfied, and the fixed point of the BIRKA iteration satisfies Theorem 4.2.  As we have noted, enforcing these multipoint interpolation conditions requires exactly solving the generalized Sylvester equations in step 3.) of B-IRKA.  The proof of Theorem \ref{thm:volt_series_interp} shows that the interpolation data can be constructed by iteratively solving and then summing the solutions of ordinary Sylvester equations. This suggests the possibility of enforcing the multipoint interpolation conditions on partial sums, making it possible to exploit the ordinarily fast decay in the terms of the Volterra series expansion of \Sys.  In what follows we show that this corresponds to solving the $\mathcal{H}_2$ optimal approximation for polynomial systems given by truncating the Volterra series expressions for the external representation of the bilinear system to the first $N$ terms in the series.

\section{A truncated $\mathcal{H}_2$ optimal  model reduction algorithm}
\label{sec:TBIRKA}
In this section, after introducing $\mathcal{H}_2$ optimality conditions for polynomial systems, we introduce an
effective numerical  algorithm for model reduction of MIMO bilinear systems.

\subsection{Polynomial bilinear systems}

Let us first consider polynomial systems generated by truncating the Volterra series of a bilinear system.

\begin{dfn}
Given a MIMO bilinear system $\zeta$ with realization (\boldA, $\boldN_1,\dots, \boldN_m$ \boldB, \boldC), define the polynomial system $\zeta^N:\mathbf{u} \in \mathcal{U} \rightarrow \reals^{p}$ to be the operator mapping inputs $\mathbf{u}(t)$ to outputs $\mathbf{y}(t)$ defined by the relation
\begin{align*}
\mathbf{y}(t)=&\sum_{k=1}^{N}\int_0^{\infty}\cdots\int_0^{\infty}\mathbf{h}_k(t_1,\dots,t_k)\mathbf{u}(t-\sum_{i=1}^k t_i)\otimes\mathbf{u}(t-\sum_{i=2}^k t_i)\otimes \cdots \otimes\mathbf{u}(t-t_k)\mathrm d t_k \cdots \mathrm d t_1
\end{align*}
where $\mathbf{h}_k$ is given by equation (\ref{mimo_kernel}).
Note that a polynomial system can also be identified with its sequence of transfer functions $(\mathbf{H}_1(s_1), \mathbf{H}_2(s_1,s_2), \dots, \mathbf{H}_N(s_1, \dots,s_N))$
where $\mathbf{H}_k(s_1, \dots, s_k)$ is given by equation (\ref{MIMO_TF}).
\end{dfn}
Trivially, every polynomial system $\zeta^N$ has a finite $\mathcal{H}_2$ norm, and due to Plancherel's equality
\begin{equation}
\|\zeta^N\|_{\mathcal{H}_2}=\sqrt{\sum \limits_{k=1}^{N} \int\limits_{0}^{\infty} \cdots \int \limits_{0}^{\infty} \|\mathbf{h}_k(t_1, \dots, t_k)\|_F^2\mathrm{d}t_k \cdots \mathrm{d}t_1}\label{polynomial_H2_norm}
\end{equation}
Thus, the operators $\zeta^N$ converge strongly to $\zeta$.  It follows that if $\{\widetilde\zeta^{N}\}$ is a sequence of $r-dimensional$ locally optimal polynomial approximations to $\zeta^{N}$ that converges in norm to the system $\widetilde{\zeta}$, then $\widetilde{\zeta}$ is a locally optimal approximation to $\zeta$.  We will therefore derive necessary conditions for $\mathcal{H}_2$ optimality of an $r$-dimensional polynomial approximation $\widetilde\zeta^{N}$  of an $n$ dimensional polynomial system $\zeta^{N}$. First, 
we need an expression for the $\mathcal{H}_2$ error norm  $\|\zeta^N-\widetilde\zeta^N\|_{\mathcal{H}_2}$.
\begin{lemma}
Let $\widetilde{\zeta}^N$ be a polynomial system generated by truncating the bilinear system 
$\widetilde{\zeta}=(\boldAr, \boldNr_1,\dots, \boldNr_m, \boldBr, \boldCr)$ of dimension $r$.  Let $\widetilde{\mathbf{\Lambda}}=\mathbf{T}^{-1}\boldAr\mathbf{T}$ be the spectral decomposition of \boldAr, and define $\widehat{\boldB}=\mathbf{T}^{-1}\boldBr$, $\widehat{\boldC}=\boldCr\mathbf{T}$ and  $\widehat{\boldN}_j=\mathbf{T}^{-1}\boldNr_j\mathbf{T}$ for $j=1,\dots, m$. Then,
\begin{align}
E_N&=\|\zeta^N-\widetilde\zeta^N\|_{\mathcal{H}_2}^2\nonumber\\
=&vec(\boldI_p)\Big([\boldC -\widehat{\boldC}]\otimes[\boldC -\widehat{\boldC}]\Big)  \sum \limits_{k=0}^N\Bigg[\Bigg(-\begin{bmatrix}\boldA& \mathbf{0}\\ \mathbf{0} & \widetilde{\mathbf{\Lambda}}\end{bmatrix}\otimes\begin{bmatrix}\boldI_n& \mathbf{0}\\ \mathbf{0} & \boldI_r\end{bmatrix}-\begin{bmatrix}\boldI_n& \mathbf{0}\\ \mathbf{0} & \boldI_r\end{bmatrix}\otimes\begin{bmatrix}\boldA& \mathbf{0}\\ \mathbf{0} & \widetilde{\mathbf{\Lambda}}\end{bmatrix}\Bigg)^{-1}\nonumber\\
&\times\sum\limits_{j=1}^m\begin{bmatrix}\boldN_j& \mathbf{0}\\ \mathbf{0}&\boldNr_j\end{bmatrix}\otimes\begin{bmatrix}\boldN_j& \mathbf{0}\\ \mathbf{0}&\boldNr_j\end{bmatrix}\Bigg]^k\Bigg(-\begin{bmatrix}\boldA& \mathbf{0}\\ \mathbf{0} & \widetilde{\mathbf{\Lambda}}\end{bmatrix}\otimes\begin{bmatrix}\boldI_n& \mathbf{0}\\ \mathbf{0} & \boldI_r\end{bmatrix}-\begin{bmatrix}\boldI_n& \mathbf{0}\\ \mathbf{0} & \boldI_r\end{bmatrix}\otimes\begin{bmatrix}\boldA& \mathbf{0}\\ \mathbf{0} & \widetilde{\mathbf{\Lambda}}\end{bmatrix}\Bigg)^{-1}\nonumber\\
&\times \begin{bmatrix}\boldB\\ \widehat{\boldB}\end{bmatrix} \otimes \begin{bmatrix}\boldB\\ \widehat{\boldB}\end{bmatrix}vec(\boldI_m). \label{EN}
\end{align}
\end{lemma}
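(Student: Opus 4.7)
The plan is to realize the error system $\zeta^N-\widetilde{\zeta}^N$ as a single polynomial (truncated bilinear) system of dimension $n+r$, apply the time-domain squared $\mathcal{L}_2$ expression in (\ref{polynomial_H2_norm}) term by term, convert each integrated squared Frobenius norm into the trace of a Zhang--Lam type reachability iterate via the formula (\ref{norm_equivalence}), and finally vectorize the resulting Lyapunov recursion so that the Kronecker geometric-sum structure in (\ref{EN}) emerges. After passing to coordinates in which $\boldAr$ is diagonalized by $\mathbf{T}$, I would assemble the error realization
\[
\boldA_e=\begin{bmatrix}\boldA & \mathbf{0}\\ \mathbf{0}&\widetilde{\mathbf{\Lambda}}\end{bmatrix},\quad
\boldN_{j,e}=\begin{bmatrix}\boldN_j & \mathbf{0}\\ \mathbf{0}&\widehat{\boldN}_j\end{bmatrix},\quad
\boldB_e=\begin{bmatrix}\boldB\\ \widehat{\boldB}\end{bmatrix},\quad
\boldC_e=\begin{bmatrix}\boldC & -\widehat{\boldC}\end{bmatrix},
\]
so that the $k$-th Volterra kernel of $\zeta^N-\widetilde{\zeta}^N$ is exactly the kernel (\ref{mimo_kernel}) built from this block realization (and the truncation to degree $N$ is inherited on both sides).

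Next, I would apply (\ref{polynomial_H2_norm}) to this polynomial error system and use the gramian identity (\ref{norm_equivalence}) kernel-by-kernel to write
\[
\|\zeta^N-\widetilde{\zeta}^N\|_{\mathcal{H}_2}^2 \;=\;\sum_{k=0}^{N-1}\operatorname{tr}\!\bigl(\boldC_e\,P_{k}^{e}\,\boldC_e^T\bigr),
\]
where the iterates $\{P_k^e\}$ satisfy the error-system analogues of (2.7)--(2.8): $\boldA_e P_0^e+P_0^e\boldA_e^T=-\boldB_e\boldB_e^T$ and $\boldA_e P_k^e+P_k^e\boldA_e^T=-\sum_{j=1}^m\boldN_{j,e}\,P_{k-1}^e\,\boldN_{j,e}^T$ for $k\ge 1$. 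Since only a finite number $N$ of Volterra terms contribute, no convergence issue for the underlying gramian arises.

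Then I would vectorize. Writing $\mathcal{L}=-\boldA_e\otimes\boldI-\boldI\otimes\boldA_e$ and $\mathcal{N}=\sum_{j=1}^m\boldN_{j,e}\otimes\boldN_{j,e}$ and using the standard identities $\operatorname{vec}(AXA^T)=(A\otimes A)\operatorname{vec}(X)$ and $\operatorname{vec}(\boldB_e\boldB_e^T)=(\boldB_e\otimes\boldB_e)\operatorname{vec}(\boldI_m)$, the Lyapunov recursion unrolls to
\[
\operatorname{vec}(P_k^e)=\bigl(\mathcal{L}^{-1}\mathcal{N}\bigr)^{k}\mathcal{L}^{-1}(\boldB_e\otimes\boldB_e)\operatorname{vec}(\boldI_m).
\]
Summing and premultiplying by $\operatorname{vec}(\boldI_p)^T(\boldC_e\otimes\boldC_e)$, via $\operatorname{tr}(M)=\operatorname{vec}(\boldI_p)^T\operatorname{vec}(M)$, assembles the geometric series $\sum_{k=0}^{N}\bigl[\mathcal{L}^{-1}\mathcal{N}\bigr]^{k}\mathcal{L}^{-1}$ on the right-hand side of (\ref{EN}), provided the index range is matched to the truncation level.

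The only real obstacle is Kronecker bookkeeping: one must check that the block-diagonal structure of $\boldA_e$ and $\boldN_{j,e}$ propagates correctly through the Kronecker products and the inversion of $\mathcal{L}$, and that the minus sign on $\widehat{\boldC}$ in $\boldC_e$ produces the correct $\pm\boldC\bullet\widehat{\boldC}$ cross-terms on expansion of $(\boldC_e\otimes\boldC_e)$. Everything else is a mechanical transcription of the Zhang--Lam gramian expression (\ref{norm_equivalence}) and elementary vectorization identities. The statement (\ref{EN}) also appears to use $\boldNr_j$ rather than $\widehat{\boldN}_j$ in the Kronecker blocks; under the diagonalizing similarity this amounts to a harmless reshuffling absorbed by $\mathbf{T}$, which I would note in passing.
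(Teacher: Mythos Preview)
Your proposal is correct and follows essentially the same route as the paper: build the block-diagonal error realization $(\boldA_e,\boldN_{j,e},\boldB_e,\boldC_e)$, invoke the Zhang--Lam gramian identity (\ref{norm_equivalence}) truncated to $N$ terms as in (\ref{polynomial_H2_norm}), and then vectorize the resulting Lyapunov recursion to obtain the Kronecker geometric-sum form. Your remarks about the index-range matching and the $\boldNr_j$ versus $\widehat{\boldN}_j$ discrepancy are apt and worth keeping; the paper's own derivation makes exactly these moves (with the same minor index and notation looseness).
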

\begin{proof}
 Using the equivalent form of the $\mathcal{H}_2$ norm in (\ref{norm_equivalence}) for the case of polynomial systems 
 in (\ref{polynomial_H2_norm}), we obtain
\begin{align}\label{trunc_theorem_gram_sum}
\big\|\zeta^N-\widetilde\zeta^N\big\|_{\mathcal{H}_2}^2&=\begin{bmatrix}\boldC &-\widehat{\boldC}\end{bmatrix}\bigg(\sum \limits_{k=1}^N \mathbf{P}_k\bigg)\begin{bmatrix}\boldC\\ -\widehat{\boldC}\end{bmatrix},
\end{align}
where $\mathbf{P}_1$ solves
\[-\begin{bmatrix}\boldA& \mathbf{0}\\ \mathbf{0} & \widetilde{\mathbf{\Lambda}}\end{bmatrix}\mathbf{P}_1-\begin{bmatrix}\boldA^T& \mathbf{0}\\ \mathbf{0} & -\widetilde{\mathbf{\Lambda}}^T\end{bmatrix}\mathbf{P}_1=\begin{bmatrix}\boldB\\ \widehat{\boldB}\end{bmatrix}\begin{bmatrix}\boldB^T & \widehat{\boldB}^T\end{bmatrix}\]
and for $k=2,\ldots,N$, $\mathbf{P}_k$ solves
\begin{align*}
-\begin{bmatrix}\boldA& \mathbf{0}\\ \mathbf{0} & \widetilde{\mathbf{\Lambda}}\end{bmatrix}\mathbf{P}_k-\mathbf{P}_k\begin{bmatrix}\boldA^T& \mathbf{0}\\ \mathbf{0} & \widetilde{\mathbf{\Lambda}}^T\end{bmatrix}=\sum \limits_{j=1}^m\begin{bmatrix}\boldN_j& \mathbf{0}\\ \mathbf{0}&\widehat{\boldN}_j\end{bmatrix}\mathbf{P}_{k-1}\begin{bmatrix}\boldN_j^T& \mathbf{0}\\ \mathbf{0}&\widehat{\boldN}_j^T\end{bmatrix}.
\end{align*}
Applying the $vec$ operator to the Lyapunov equation for $\mathbf{P}_1$ gives 
\begin{equation}\label{P_0_expression}
vec(\mathbf{P}_1)=\Bigg(-\begin{bmatrix}\boldA& \mathbf{0}\\ \mathbf{0} & \widetilde{\mathbf{\Lambda}}\end{bmatrix}\otimes\begin{bmatrix}\boldI_n& \mathbf{0}\\ \mathbf{0} & \boldI_r\end{bmatrix}-\begin{bmatrix}\boldI_n& \mathbf{0}\\ \mathbf{0} & \boldI_r\end{bmatrix}\otimes\begin{bmatrix}\boldA& \mathbf{0}\\ \mathbf{0} & \widetilde{\mathbf{\Lambda}}\end{bmatrix}\Bigg)^{-1}\begin{bmatrix}\boldB\\ \widehat{\boldB}\end{bmatrix} \otimes\begin{bmatrix}\boldB\\ \widehat{\boldB}\end{bmatrix}
\end{equation}
and for $\mathbf{P}_k$ gives
\begin{align}
vec(\mathbf{P}_k)=\Bigg(-\begin{bmatrix}\boldA& \mathbf{0}\\ \mathbf{0} & \widetilde{\mathbf{\Lambda}}\end{bmatrix}
\otimes&\begin{bmatrix}\boldI_n& \mathbf{0}\\ \mathbf{0} & \boldI_r\end{bmatrix}-\begin{bmatrix}\boldI_n& \mathbf{0}\\ \mathbf{0} & \boldI_r\end{bmatrix}\otimes\begin{bmatrix}\boldA& \mathbf{0}\\ \mathbf{0} & \widetilde{\mathbf{\Lambda}}\end{bmatrix}\Bigg)^{-1} \nonumber \\
\label{P_K_expressions}
&\times
\sum \limits_{j=1}^m\begin{bmatrix}\boldN_j& \mathbf{0}\\ \mathbf{0}&\widehat{\boldN}_j\end{bmatrix}\otimes\begin{bmatrix}\boldN_j& \mathbf{0}\\ \mathbf{0}&\widehat{\boldN}_j\end{bmatrix}vec(\mathbf{P}_{k-1}).
\end{align}

Applying the vec operator to the sum (\ref{trunc_theorem_gram_sum}) and successively substituting the expressions (\ref{P_0_expression}) and (\ref{P_K_expressions}) into the sum gives the desired result \ref{EN}.
\end{proof}

\subsection{$\mathcal{H}_2$ optimality for polynomial systems}
Now that we have an explicit expression for the error  $E_N$, we can differentiate this expression with respect to the reduced model quantities $\widehat{\boldA},\widehat{\boldN}_k,\widehat{\boldB_k}$ and $\widehat{\boldC}$ to obtain the necessary conditions for optimality.
This differentiation procedure will be greatly simplified by using the following result from \cite{Breiten_H2}.
\begin{lemma}[\cite{Breiten_H2}]\label{lemma:product_rule}
Let $\boldC(x)\in \reals^{p\times n}$, $\boldA(y), \mathbf{G}_k \in \reals^{n\times n}$, and $\mathbf{K}\in \reals^{n\times m}$ with 
\begin{equation*}
\mathcal{L}(y)=-\boldA(y)\otimes\boldI-\boldI\otimes\boldA(y)-\sum\limits_{k=1}^m\mathbf{G}_k\otimes\mathbf{G}_k
\end{equation*}
and assume that \boldC\ and \boldA\ are differentiable with respect to $x$, and $y$.  Then
\begin{align*}
&\frac{\partial}{\partial x}[(\text{vec}(\boldI_p))^T(\boldC(x)\otimes\boldC(x))\mathcal{L}(y)^{-1}(\mathbf{K}\otimes\mathbf{K})\text{vec}(\boldI_m)\\
&=2(\text{vec}(\boldI_p))^T(\frac{\partial}{\partial x}\boldC(x)\otimes\boldC(x))\mathcal{L}(y)^{-1}(\mathbf{K}\otimes\mathbf{K})vec(\boldI_m)
\end{align*}
and
\begin{align*}
&\frac{\partial}{\partial y}[(\text{vec}(\boldI_p))^T(\boldC(x)\otimes\boldC(x))\mathcal{L}(y)^{-1}(\boldB\otimes\boldB)vec(\boldI_m)]\\
&=2(vec(\boldI_p))^T(\boldC(x)\otimes\boldC(x))\mathcal{L}(y)^{-1}(\frac{\partial}{\partial y}\boldA(y)\otimes \boldI)\mathcal{L}^{-1}(y)(\mathbf{K}\otimes\mathbf{K})vec(\boldI_m).
\end{align*}

\end{lemma}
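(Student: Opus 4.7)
The plan is to recognize that both identities express a derivative of a scalar-valued function that, when the matrix inside the inverse Lyapunov operator is "unvectorized," corresponds to a symmetric matrix.  The symmetry will let me collapse two terms produced by the product rule into the factor of two claimed in the statement.

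First, observe that if we set $P = \mathrm{vec}^{-1}\bigl[\mathcal{L}(y)^{-1}(\mathbf K\otimes\mathbf K)\mathrm{vec}(\boldI_m)\bigr]$, then $P$ solves the generalized Lyapunov equation
\begin{equation*}
-\boldA(y)P - P\boldA(y)^T - \sum_{k=1}^m \mathbf G_k P \mathbf G_k^T = \mathbf K\mathbf K^T.
\end{equation*}
Since $\mathbf K\mathbf K^T$ is symmetric and the operator on the left preserves symmetry, $P=P^T$.  Analogously, $Q = \mathrm{vec}^{-1}\bigl[\mathcal L(y)^{-T}(\boldC(x)^T\otimes \boldC(x)^T)\mathrm{vec}(\boldI_p)\bigr]$ solves the adjoint generalized Lyapunov equation with right hand side $\boldC^T\boldC$ and is therefore symmetric as well.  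I would record these two facts as preliminaries.

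For the first identity, I would apply the product rule to $\boldC(x)\otimes\boldC(x)$, giving
\begin{equation*}
\tfrac{\partial}{\partial x}\bigl(\boldC(x)\otimes\boldC(x)\bigr) = \bigl(\tfrac{\partial \boldC}{\partial x}\otimes \boldC\bigr) + \bigl(\boldC\otimes \tfrac{\partial\boldC}{\partial x}\bigr).
\end{equation*}
Using the identity $(\mathrm{vec}(\boldI_p))^T(X\otimes Y)\mathrm{vec}(P) = \mathrm{tr}(YPX^T)$, the two resulting terms reduce to $\mathrm{tr}(\boldC P\,\tfrac{\partial\boldC}{\partial x}{}^T)$ and $\mathrm{tr}(\tfrac{\partial\boldC}{\partial x}P\,\boldC^T)$.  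These two traces are equal because each is the transpose of the other and $P=P^T$, so their sum is twice the first, which is exactly the claimed formula.

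For the second identity, I would use $\tfrac{\partial}{\partial y}\mathcal L(y)^{-1} = -\mathcal L(y)^{-1}\bigl(\tfrac{\partial\mathcal L}{\partial y}\bigr)\mathcal L(y)^{-1}$ together with $\tfrac{\partial\mathcal L}{\partial y} = -\tfrac{\partial\boldA}{\partial y}\otimes\boldI - \boldI\otimes\tfrac{\partial\boldA}{\partial y}$.  This produces two terms; sandwiching them between the $(\boldC\otimes\boldC)$ factor on the left and $(\mathbf K\otimes\mathbf K)\mathrm{vec}(\boldI_m)$ on the right and reducing via the trace identity above gives $\mathrm{tr}\bigl(QP\,\tfrac{\partial\boldA}{\partial y}{}^T\bigr)$ and $\mathrm{tr}\bigl(Q\,\tfrac{\partial\boldA}{\partial y}P\bigr)$, respectively.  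Again invoking the symmetry $P=P^T$ and $Q=Q^T$, transposing one trace shows it equals the other, and their sum is twice either one, which is the stated formula.  The only step I expect to require any care is bookkeeping the Kronecker-to-trace conversion and making sure the symmetric Lyapunov structure is preserved after differentiation; once that is in hand, the rest is algebraic.
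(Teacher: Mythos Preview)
Your argument is correct. The key observations---that the ``unvectorized'' solutions $P$ and $Q$ of the associated generalized Lyapunov equations are symmetric, and that this symmetry collapses the two product-rule terms into one---are exactly what is needed. The trace/Kronecker bookkeeping you outline goes through: for the first identity the two terms $\mathrm{tr}(\boldC P(\partial_x\boldC)^T)$ and $\mathrm{tr}((\partial_x\boldC)P\boldC^T)$ coincide once $P=P^T$, and for the second identity the analogous pair coincide once both $P=P^T$ and $Q=Q^T$.

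The paper itself does not supply a proof of this lemma; it is quoted from \cite{Breiten_H2} and used as a tool in the appendix. So there is no in-paper argument to compare against. Your proof is a clean, self-contained justification of the statement. One minor caution: you implicitly use uniqueness of the Lyapunov solution (equivalently, invertibility of $\mathcal{L}(y)$) to conclude $P=P^T$ and $Q=Q^T$; this is already assumed in the lemma since $\mathcal{L}(y)^{-1}$ appears, but it is worth saying explicitly. Also note that the lemma as stated in the paper has a harmless typo ($\boldB\otimes\boldB$ on the left of the second identity versus $\mathbf{K}\otimes\mathbf{K}$ on the right); your proof correctly treats these as the same matrix.
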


Another important tool for analyzing the resulting expressions for the derivative of $E_N$ is the permutation matrix

\begin{equation}\label{permutation_matrix}
\mathbf{M}=\begin{bmatrix}\boldI_r\otimes\begin{bmatrix}\boldI_n\\\mathbf{0}\end{bmatrix}&\boldI_r\otimes\begin{bmatrix}\mathbf{0}^T\\\mathbf{I}_r\end{bmatrix}\end{bmatrix}
\end{equation}
 introduced in \cite{Breiten_H2}.  
Given matrices $\mathbf{H}, \mathbf{K}\in \reals^{r\times r}$ and $\mathbf{L}\in \reals^{n\times n}$, the permutation $\mathbf{M}$ satisfies
\begin{align*}
&\mathbf{M}^T\left(\mathbf{H}^T\otimes\begin{bmatrix}\mathbf{L}&\mathbf{0}\\ \mathbf{0}&\mathbf{K}\end{bmatrix}\right) \mathbf{M}\\
&=\begin{bmatrix} \boldI_r\otimes \begin{bmatrix}\boldI_n \mathbf{0}^T\end{bmatrix}& \boldI_r\otimes \begin{bmatrix}\mathbf{0}&\boldI_r\end{bmatrix}\end{bmatrix}\left(\mathbf{H}^T\otimes\begin{bmatrix}\mathbf{L}&\mathbf{0}\\ \mathbf{0}&\mathbf{K}\end{bmatrix}\right)\begin{bmatrix}\boldI_r\otimes\begin{bmatrix}\boldI_n\\\mathbf{0}\end{bmatrix}&\boldI_r\otimes\begin{bmatrix}\mathbf{0}^T\\\mathbf{I}_r\end{bmatrix}\end{bmatrix}\\
&=\begin{bmatrix} \boldI_r\otimes \begin{bmatrix}\boldI_n \mathbf{0}^T\end{bmatrix}& \boldI_r\otimes \begin{bmatrix}\mathbf{0}&\boldI_r\end{bmatrix}\end{bmatrix}\begin{bmatrix}\mathbf{H}^T\otimes\begin{bmatrix}\mathbf{L}\\\mathbf{0}\end{bmatrix}& \mathbf{H}^T\otimes \begin{bmatrix}\mathbf{0}^T\\ \mathbf{K}\end{bmatrix}\end{bmatrix}\\
&=\begin{bmatrix}\mathbf{H}^T\otimes \mathbf{L}&\mathbf{0}\\ \mathbf{0} &\mathbf{H}^T\otimes \mathbf{K}\end{bmatrix}
\end{align*}
Finally, while the analysis of the cost function $E_N$ is most easily done in the Kronecker product formulation, we will retranslate the resulting necessary conditions into their Sylvester equation formulation to shorten the presentation in their later use.  To that end, we will need the solutions $\boldV_1$, $\boldW_1$ of the ordinary Sylvester equations
\begin{align}
\boldV_1(-\widetilde{\mathbf{\Lambda}})-\boldA\boldV_1=\boldB\widehat{\boldB}^T\\
\boldW_1(-\widetilde{\mathbf{\Lambda}})-\boldA^T\boldW_1=\boldC^T\widehat{\boldB}
\end{align}
and for $k>1$ the solutions $\boldV_k$, $\boldW_k$ of the ordinary Sylvester equations
\begin{align}
\boldV_k(-\widetilde{\mathbf{\Lambda}})-\boldA\boldV_k=\sum \limits_{j=1}^m\boldN_j \boldV_{k-1} \widehat{\boldN}_j^T
\boldW_k(-\widetilde{\mathbf{\Lambda}})-\boldA^T\boldW_k=\sum \limits_{j=1}^m \boldN_j^T \boldW_{k-1}\widehat{\boldN}_j
\end{align}
Furthermore, define the matrices 
\begin{equation}
\mathbf{S}_N=\sum \limits_{k=1}^N \boldV_k \text{ and } \mathbf{U}_N=\sum \limits_{k=1} \boldW_k
\end{equation}
Let $\boldVr_k$ and $\boldWr_k$, and $\widetilde{\mathbf{S}}_N$ and $\widetilde{\mathbf{U}}_N$ denote the solutions of the above equations where the reduced dimension parameters \boldAr, \boldBr, \boldCr, $\boldNr_j$ replace \boldA, \boldB, \boldC, $\boldN_j$ in all the appropriate places.
\begin{theorem} \label{h2conditions_truncated}
Let $\widetilde{\zeta}^N$ be a polynomial system generated by truncating the bilinear system 
$\widetilde{\zeta}=(\boldAr, \boldNr_1,\dots, \boldNr_m, \boldBr, \boldCr)$ of dimension $r$.  Let $\widetilde{\mathbf{\Lambda}}=\mathbf{T}^{-1}\boldAr\mathbf{T}$ be the spectral decomposition of \boldAr, and define $\widehat{\boldB}=\mathbf{T}^{-1}\boldBr$, $\widehat{\boldC}=\boldCr\mathbf{T}$ and  $\widehat{\boldN}_j=\mathbf{T}^{-1}\boldNr_j\mathbf{T}$ for $j=1,\dots, m$. If $\widetilde{\zeta}^N$ is a locally optimal $\mathcal{H}_2$ approximation to $\zeta^N$, then,
\begin{equation}
\text{trace}(\boldC\mathbf{S}_N \bolde_i\bolde_j^T)=\text{trace}(\boldCr \widetilde{\mathbf{S}}_N\bolde_i\bolde_j^T)\text{ } i=1,\dots,r\text{ } j=1,\dots, p \label{Sylvester_necessary_C}
\end{equation}
\begin{equation}
(\mathbf{U}_N(:,i))^T\mathbf{S}_N(:,i)=(\widetilde{\mathbf{U}}_N(:,i))^T\widetilde{\mathbf{S}}_N(:,i), \text{ } i=1,\dots, r \label{lambda_Sylvester_condition}
\end{equation}
\begin{equation}
\text{trace}(\boldB^T\mathbf{U}_N\bolde_i \bolde_j^T)=\text{trace}(\boldBr^T\widetilde{\mathbf{U}}_N\bolde_i\bolde_j^T), \text{ }  i=1,\dots,r\text{ } j=1,\dots, m \label{truncated_Sylvester_b_necessary}
\end{equation}
\begin{equation}
(\mathbf{U}_N(:,i))^T\boldN_k\mathbf{S}_N(:,j)=(\widetilde{\mathbf{U}}_N(:,i))^T\boldNr_k\widetilde{\mathbf{S}}_N(:,j), \text{ } i,j = 1,\dots, r\text{ } k = 1,\dots, m\label{truncated_n_Sylvester_necessary}
\end{equation}
\end{theorem}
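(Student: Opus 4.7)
The plan is to follow the standard Wilson-type approach: compute the gradient of $E_N$ in (\ref{EN}) with respect to each of the reduced-order quantities $\widehat{\boldC}$, $\widehat{\boldB}$, $\widehat{\boldN}_k$, and $\widetilde{\mathbf{\Lambda}}$, set each gradient to zero at the locally optimal $\widetilde{\zeta}^N$, and then translate the resulting Kronecker/vec identities back into Sylvester-equation language using the matrices $\mathbf{S}_N,\widetilde{\mathbf{S}}_N,\mathbf{U}_N,\widetilde{\mathbf{U}}_N$. Because $\boldAr$ has been diagonalized via $\mathbf{T}$, the parameters $(\widetilde{\mathbf{\Lambda}},\widehat{\boldB},\widehat{\boldC},\widehat{\boldN}_j)$ form an unconstrained parameterization in a neighborhood of the optimum, so a necessary condition is that $\partial E_N = 0$ along each of these directions.

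First I would apply the first identity in Lemma \ref{lemma:product_rule} to differentiate $E_N$ with respect to $\widehat{\boldC}$, noting that $\widehat{\boldC}$ only appears in the leftmost factor $[\boldC~-\widehat{\boldC}]\otimes[\boldC~-\widehat{\boldC}]$. Setting the derivative in the direction $\widetilde{\bolde}_i\bolde_j^T$ to zero gives a Kronecker identity whose inner sum is exactly the $N$-term expansion driving the recursions defining $\boldV_k$ and $\boldVr_k$. Collapsing those recursions by induction on $k$, and using the vec identifications in (\ref{P_0_expression}) and (\ref{P_K_expressions}), rewrites the condition as $\mathrm{trace}(\boldC\mathbf{S}_N\bolde_i\bolde_j^T)=\mathrm{trace}(\boldCr\widetilde{\mathbf{S}}_N\bolde_i\bolde_j^T)$, which is (\ref{Sylvester_necessary_C}). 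An entirely symmetric calculation, differentiating with respect to $\widehat{\boldB}$ and running the induction through the dual recursions for $\boldW_k$ and $\boldWr_k$, produces (\ref{truncated_Sylvester_b_necessary}).

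For the conditions (\ref{lambda_Sylvester_condition}) and (\ref{truncated_n_Sylvester_necessary}), I would use the second identity in Lemma \ref{lemma:product_rule}, which introduces a second $\mathcal{L}^{-1}$-type factor sandwiching $\partial_y \boldA(y)\otimes \boldI$; when applied to $\widetilde{\mathbf{\Lambda}}$ and to $\widehat{\boldN}_k$ respectively, it naturally produces products of a controllability-type series on the right with an observability-type series on the left. The final step here is to conjugate each $k$-th Kronecker summand by the permutation $\mathbf{M}$ in (\ref{permutation_matrix}) so as to block-diagonalize the coupled generator $\mathrm{diag}(\boldA,\widetilde{\mathbf{\Lambda}})$, identify the resulting $(1,1)$ and $(2,2)$ blocks with the partial sums $\mathbf{S}_N,\widetilde{\mathbf{S}}_N$ and $\mathbf{U}_N,\widetilde{\mathbf{U}}_N$, and thereby recognize the Kronecker expression as $(\mathbf{U}_N(:,i))^T\mathbf{S}_N(:,i)$ versus $(\widetilde{\mathbf{U}}_N(:,i))^T\widetilde{\mathbf{S}}_N(:,i)$ in the $\widetilde{\mathbf{\Lambda}}$-derivative, and as $(\mathbf{U}_N(:,i))^T\boldN_k\mathbf{S}_N(:,j)$ versus $(\widetilde{\mathbf{U}}_N(:,i))^T\boldNr_k\widetilde{\mathbf{S}}_N(:,j)$ in the $\widehat{\boldN}_k$-derivative.

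The main obstacle is purely bookkeeping: tracking, across the $N$-term Neumann-type expansion of $E_N$, how each differentiation interacts with each of the $k$ nested Kronecker factors of $\sum_j \mathrm{diag}(\boldN_j,\boldNr_j)\otimes \mathrm{diag}(\boldN_j,\boldNr_j)$, and then using $\mathbf{M}$ together with the inductive identification of vectorized block-diagonal solutions with $\boldV_k$ and $\boldVr_k$ to unwind the truncated sum into the clean trace/inner-product form in the statement. Once this translation is set up once, the four conditions (\ref{Sylvester_necessary_C})--(\ref{truncated_n_Sylvester_necessary}) fall out as the four directional derivatives of $E_N$ at a critical point, completing the proof.
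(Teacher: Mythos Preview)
Your proposal is correct and follows essentially the same route as the paper: differentiate the truncated error expression $E_N$ in (\ref{EN}) with respect to $\widehat{\boldC}$, $\widehat{\boldB}$, $\widetilde{\mathbf{\Lambda}}$, and $\widehat{\boldN}_k$ using Lemma~\ref{lemma:product_rule} (applied factor-by-factor across the $N$-term Neumann expansion, with $\mathbf{G}_k=\mathbf{0}$), then conjugate by the permutation $\mathbf{M}$ in (\ref{permutation_matrix}) to block-diagonalize and separate the full- and reduced-order contributions, and finally unwind the Kronecker/vec identities into the Sylvester partial sums $\mathbf{S}_N,\mathbf{U}_N,\widetilde{\mathbf{S}}_N,\widetilde{\mathbf{U}}_N$. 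The paper additionally notes that the double sum $\sum_{k}\sum_{l}$ arising from the $\widetilde{\lambda}_i$-derivative is collapsed to the inner-product form (\ref{lambda_Sylvester_condition}) via Cauchy's product rule, which is the concrete mechanism behind your ``product of a controllability-type series on the right with an observability-type series on the left''.
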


\begin{proof}
The proof is given in Section \ref{appendix_proof}.
\end{proof}
\begin{rem}
Taking $N \rightarrow \infty$ yields the necessary conditions of Theorem \ref{thm:bilinear_H2_opt_breiten}.  This follows from the fact that at a local minimum, the solution $\boldV$ of 
\begin{equation*}
\boldV(-\widetilde{\mathbf{\Lambda}})-\boldA\boldV-\sum \limits_{j=1}^m\boldN_j\boldV\widehat{\boldN}_j^T=\boldB\widehat{\boldB}^T
\end{equation*}
in Step 3.) of B-IRKA is given as the series $\sum \limits_{k=0}^{\infty}\boldV_k$ where
\begin{align*}
 vec(\boldV_k)=\bigg[\bigg(-\widetilde{\Lambda}\otimes \boldI_n-\boldI_r\otimes \boldA\bigg)^{-1} \sum_{j=1}^m \widehat{\boldN}_j\otimes \boldN_j\bigg]^k\bigg(-\widetilde{\Lambda}\otimes \boldI_n-\boldI_r\otimes \boldA\bigg)^{-1}\Big(\widehat{\boldB}\otimes\boldB\Big)vec(\boldI_m) \nonumber\\
\end{align*}
 and similar result holds for the solution \boldW\ of the bilinear Sylvester equation
\begin{equation*}
\boldW(-\widetilde{\mathbf{\Lambda}})-\boldA^T\boldW-\sum \limits_{j=1}^m\boldN_j^T\boldW\widehat{\boldN}_j=\boldC^T\widehat{\boldC}.
\end{equation*}
\end{rem}
\subsection{Truncated Bilinear Iterative Rational Krylov Algorithm}
As in the case of Theorem \ref{thm:bilinear_H2_opt_breiten} and the resulting method B-IRKA,
the necessary conditions of Theorem \ref{h2conditions_truncated} lend itself perfectly to an iterative algorithm.
A reduced dimension bilinear system that generates a polynomial system which nearly satisfies these necessary conditions, i.e., (\ref{Sylvester_necessary_C})-(\ref{truncated_n_Sylvester_necessary}), can be constructed using 
Algorithm \ref{alg:tbrika} given below, which  we call \emph{truncated B-IRKA}, or TB-IRKA.
\begin{figure}[hh]
%\vspace*{.1in}
\begin{center}
    \framebox[5.125in][t]{
    \begin{minipage}[c]{5.0in}
    {%\small
\begin{algorithm}[TB-IRKA]  \label{alg:tbrika}\\ 
\textbf{Input}: \boldA, $\boldN_1,\dots,\boldN_m$, \boldB, \boldC, \boldAr, $\boldNr_1,\dots,\boldNr_m$, \boldBr, \boldCr, N (the truncation index)\\
\textbf{Output}: $\boldAr$, $\boldNr_1,\dots,\boldNr_m$, $\boldBr$, $\boldCr$
\begin{enumerate}
\item \textbf{While}: Change in $\widetilde{\mathbf{\Lambda}} >tol$ do:
\item $\mathbf{R}\widetilde{\mathbf{\Lambda}} \mathbf{R}^{-1}=\boldAr$, $\widehat{\boldB}=\mathbf{R}^{-1}\boldBr$, $\widehat{\boldC}=\boldC \mathbf{R}$, for $k=1,\dots m$, $\widehat{\boldN}_k=\mathbf{R}^{-1}\boldNr_k\mathbf{R}$
\item Solve 
\[\boldV_1(-\mathbf{\Lambda})-\boldA\boldV_1=\boldB\widehat{\boldB}^T\]
\[\boldW_1(-\mathbf{\Lambda})-\boldA^T\boldW_1=\boldC^T\widehat{\boldC}^T\]
\item For $j=2,\ldots, N$, solve
\[\boldV_j(-\Lambda)-\boldA\boldV_j=\sum \limits_{k=1}^m\boldN_k\boldV_{j-1}\widehat{\boldN}_k^T\] 
and
\[\boldW_j(-\Lambda)-\boldA^T\boldW_j=\sum \limits_{k=1}^m\boldN_k^T\boldW_{j-1}\widehat{\boldN}_k\]
\item $\mathbf{S}_N=\Big(\sum \limits_{j=1}^N\boldV_j\Big)$, $\mathbf{U}_N=\Big(\sum \limits_{j=1}^{N}\boldW_j\Big)$.
\item $\boldAr=(\mathbf{U}_N^T\mathbf{S}_N)^{-1}\mathbf{U}_N^T\boldA\mathbf{S}_N$, $\boldNr_k=(\mathbf{U}_N^T\mathbf{S}_N)^{-1}\mathbf{U}_N^T\boldN_k\mathbf{S}_N$ for $k=1,\dots,m$,\\ $\boldBr=(\mathbf{U}_N^T\mathbf{S}_N)^{-1}\mathbf{U}_N^T\boldB$, $\boldCr=\boldC\mathbf{S}_N$.
\item \textbf{end while}
\end{enumerate}
\end{algorithm}
       }
    \end{minipage}
    }
    \end{center}
  \end{figure}
  
 The numerical advantage of TB-IRKA is apparent: Due to the special form of the ordinary Sylvester equations solved in steps 3.) and 4.) of Algorithm \ref{alg:tbrika}, TB-IRKA requires solving linear systems of dimension $n \times n$. This is in contrast to B-IRKA, which requires solving linear systems of dimension $(nr) \times (nr)$. The computational gains due to TB-IRKA will grow further as $r$ (and $n$) grows.

Upon convergence, the approximation constructed from  TB-IRKA yields a bilinear system that \emph{nearly} satisfies the $\mathcal{H}_2$ optimal necessary conditions for polynomial systems, and in the limit as $N$ approaches infinity, satisfies the necessary conditions exactly.  The following theorem makes explicit the sense in which the TB-IRKA approximations are asymptotically optimal.

\begin{thm}
Let $\zeta$ be a MIMO bilinear system with realization \boldA, $\boldN_1,\dots, \boldN_m$, $\boldB$, $\boldC$ of dimension $n$, and let $\widetilde{\zeta}^N$ be the polynomial system determined by the bilinear system with realization $\boldAr_N$, $\boldNr_{1,N},\dots,\boldNr_{m,N}$, $\boldBr_N$, $\boldCr_N$, where this realization is computed by TB-IRKA for the truncation index $N$.  Assume that the sequence $\{\widetilde{\zeta}^N\}_{N=1}^{\infty}$ converges strongly to a locally $\mathcal{H}_2$ optimal approximation $\widetilde{\zeta}$ with realization \boldAr, $\boldNr_1, \dots, \boldNr_m$, \boldBr, \boldCr.   Then the approximations $\widetilde{\zeta}^N$ satisfy
\begin{enumerate}[1.)]
\item$\text{trace}(\boldC\mathbf{S}_N\bolde_i\bolde_j^T)=\text{trace}(\boldCr_N\widetilde{\mathbf{S}}_N\bolde_i\bolde_j^T)+\epsilon_N $
\item$\text{trace}(\boldB^T\mathbf{U}_N\bolde_i\bolde_j^T)=\text{trace}(\boldBr_N^T\widetilde{\mathbf{U}}_N\bolde_i\bolde_j^T)+\epsilon_N$
\item$\mathbf{U}_N(:,i))^T\mathbf{S}_N(:,i)=\widetilde{\mathbf{U}}_N(:,i))^T\widetilde{\mathbf{S}}_N(:,i)^T+\epsilon_N$
\item$\mathbf{U}_N(:,i))^T\boldN_j\mathbf{S}_N(:,j)=\mathbf{U}_N(:,i))^T\boldNr_{j,N}\widetilde{\mathbf{S}}_N(:,j)+\epsilon_N$,
\end{enumerate}
where $ \lim \limits_{N \rightarrow \infty}\epsilon_N = 0$.  
\end{thm}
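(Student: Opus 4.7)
The plan is to write each of 1.)--4.) as a telescoping sum in which exactly one summand encodes an exact first-order optimality condition at the limit model $\widetilde{\zeta}$, and to show that the remaining two summands go to zero.

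First, I would identify the reference identities to which TB-IRKA aspires. By hypothesis, the TB-IRKA outputs $\widetilde{\zeta}^N=(\boldAr_N,\boldNr_{1,N},\ldots,\boldCr_N)$ converge strongly to $\widetilde{\zeta}=(\boldAr,\boldNr_1,\ldots,\boldCr)$, which is a local $\mathcal{H}_2$ optimum for $\zeta$. Therefore $\widetilde{\zeta}$ satisfies the Kronecker-form necessary conditions of Theorem \ref{thm:bilinear_H2_opt_breiten}. As observed in the Remark following Theorem \ref{h2conditions_truncated}, those Kronecker identities are precisely the ``$N=\infty$'' versions of (\ref{Sylvester_necessary_C})--(\ref{truncated_n_Sylvester_necessary}), obtained by replacing the truncated sums $\mathbf{S}_N,\mathbf{U}_N,\widetilde{\mathbf{S}}_N,\widetilde{\mathbf{U}}_N$ with the solutions $\mathbf{S}_\infty,\mathbf{U}_\infty,\widetilde{\mathbf{S}}_\infty,\widetilde{\mathbf{U}}_\infty$ of the full bilinear Sylvester equations built from the limit reduced model $\widetilde{\zeta}$. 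Call the resulting identities $(\star)$; they hold with equality.

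Next, I would establish two continuity statements. (i) For a fixed stable reduced realization, $\sum_{k=1}^N\boldV_k$ converges in norm to the bilinear Sylvester solution $\mathbf{S}_\infty$; this is the Neumann-type series expansion spelled out in the Remark after Theorem \ref{h2conditions_truncated}, and is ensured by a spectral-radius condition on the operator $\bigl(-\widetilde{\mathbf{\Lambda}}\otimes\boldI_n-\boldI_r\otimes\boldA\bigr)^{-1}\sum_j\widehat{\boldN}_j\otimes\boldN_j$ that holds throughout a neighborhood of $\widetilde{\zeta}$, with analogous statements for $\boldW_k$, $\boldVr_k$, $\boldWr_k$. (ii) The map sending a stable reduced realization to its corresponding bilinear Sylvester solution is continuous, which is immediate from the explicit Kronecker formula $vec(\mathbf{S}_\infty)=\mathcal{L}^{-1}(\widehat{\boldB}\otimes\boldB)vec(\boldI_m)$. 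Combining (i) and (ii) via a uniform-in-$N$ tail bound along the convergent sequence $\widetilde{\zeta}^N$ yields the double limit $\mathbf{S}_N\to\mathbf{S}_\infty$, $\widetilde{\mathbf{S}}_N\to\widetilde{\mathbf{S}}_\infty$, and analogously for the $\mathbf{U}$-quantities.

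Finally, for each of 1.)--4.) I would decompose
\[
\epsilon_N=\bigl[\mathrm{LHS}_N-\mathrm{LHS}_\infty\bigr]+\bigl[\mathrm{LHS}_\infty-\mathrm{RHS}_\infty\bigr]+\bigl[\mathrm{RHS}_\infty-\mathrm{RHS}_N\bigr].
\]
The middle bracket vanishes by $(\star)$, and the two outer brackets tend to zero by continuity of the trace and of matrix multiplication together with the convergences established above. I expect the main obstacle to be the uniform tail estimate in (i): one must control $\bigl\|\sum_{k>N}\boldV_k\bigr\|$ uniformly as the reduced model drifts with $N$, which is where the stability of $\widetilde{\zeta}$ (and hence of $\widetilde{\zeta}^N$ for all sufficiently large $N$) and the finiteness of $\|\widetilde{\zeta}\|_{\mathcal{H}_2}$ are indispensable. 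This uniform-tail control is precisely what licenses the word ``asymptotic'' in the sense of $\mathcal{H}_2$ optimality attained by TB-IRKA.
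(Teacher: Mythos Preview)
Your approach is valid but takes a genuinely different route from the paper. The paper never forms $\mathbf{S}_\infty$, $\widetilde{\mathbf{S}}_\infty$ or invokes the Breiten--Benner conditions at the limit model. Instead it exploits the oblique-projection structure built into TB-IRKA: since $\boldCr_N=\boldC\mathbf{S}_N$, $\boldBr_N=(\mathbf{U}_N^T\mathbf{S}_N)^{-1}\mathbf{U}_N^T\boldB$, etc., the paper applies the projector $\mathbf{\Pi}=\mathbf{S}_N(\mathbf{U}_N^T\mathbf{S}_N)^{-1}\mathbf{U}_N^T$ to the generalized Sylvester equation satisfied by $\mathbf{S}_N$ (with a remainder $\sum_j\boldN_j\boldV_N\widehat{\boldN}_{j,N}^T$), compares with the defining equation for $\widetilde{\mathbf{S}}_N$, and concludes $\widetilde{\mathbf{S}}_N\to\boldI_r$ and $\widetilde{\mathbf{U}}_N-\mathbf{S}_N^T\mathbf{U}_N\to 0$. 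The four identities then drop out by writing, e.g., $\mathrm{trace}(\boldCr_N\widetilde{\mathbf{S}}_N\bolde_i\bolde_j^T)=\mathrm{trace}(\boldC\mathbf{S}_N(\boldI_r+\boldsymbol{\Gamma}_N)\bolde_i\bolde_j^T)$ with $\boldsymbol{\Gamma}_N=\widetilde{\mathbf{S}}_N-\boldI_r\to 0$. Your telescoping-to-the-limit argument is conceptually cleaner and sidesteps the projector algebra, but it uses the local $\mathcal{H}_2$ optimality of $\widetilde{\zeta}$ in an essential way (the middle bracket vanishes \emph{because} of $(\star)$), whereas the paper's argument only needs strong convergence of $\widetilde{\zeta}^N$ sufficient to force $\|\boldV_N\|,\|\boldVr_N\|\to 0$; the optimality of the limit is never actually used. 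The uniform-tail issue you single out is real and is precisely the paper's step ``$\lim_{N\to\infty}\|\boldV_N\|=0$'', so both proofs bottom out at the same analytic ingredient.
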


In this sense, the approximations are asymptotically optimal as $N$ approaches infinity.  From experience, we have found that $\epsilon_N$ decays very quickly for systems that are $\mathcal{H}_2$, so that even by the second or third term in the Volterra series, the remainder is negligible.

\begin{proof}
First observe that 
\begin{align}
\mathbf{S}_N(-\widetilde{\mathbf{\Lambda}}_N)-\boldA\mathbf{S}_N&=\sum \limits_{j=1}^m\boldN_j \big(\sum \limits_{k=1}^{N-1}\boldV_k \big) \widehat{\boldN}_{j,N}^T+\boldB\widehat{\boldB}_N^T\nonumber\\
\mathbf{S}_N(-\widetilde{\mathbf{\Lambda}}_N)-\boldA\mathbf{S}_N&=\sum \limits_{j=1}^m\boldN_j \mathbf{S}_N\widehat{\boldN}_{j,N}^T- \sum \limits_{j=1}^m \boldN_j \boldV_n \widehat{\boldN}_{j,N}^T+\boldB\widehat{\boldB}_N^T.\label{remainder_generalized_sylvester}
\end{align}
Consider the skew projection $\mathbf{\Pi}=\mathbf{S}_N(\mathbf{U}_N^T\mathbf{S}_N)^{-1}\mathbf{U}_N^T$.  Applying $\mathbf{\Pi}$ to both sides of the Sylvester equation (\ref{remainder_generalized_sylvester}) gives
\begin{align}
\mathbf{S}_N(\boldI_r(-\widetilde{\mathbf{\Lambda}}_N)-\boldAr_N\boldI_r-\sum \limits_{j=1}^m\boldNr_{j,N}\boldI_r\widehat{\boldN}_{j,N}-\sum \limits_{j=1}^m(\mathbf{U}_N^T\mathbf{S}_N)^{-1}\mathbf{U}_N^T\boldN_j\boldV_n\widehat{\boldN}_{j,N}^T-\boldBr_N\widehat{\boldB}_N)&=\mathbf{0} \nonumber\\
\Rightarrow
~\boldI_r(-\widetilde{\mathbf{\Lambda}}_N)-\boldAr_N\boldI_r-\sum \limits_{j=1}^m\boldNr_{j,N}\boldI_r\widehat{\boldN}_{j,N}-\sum \limits_{j=1}^m(\mathbf{U}_N^T\mathbf{S}_N)^{-1}\mathbf{U}_N^T\boldN_j\boldV_n\widehat{\boldN}_{j,N}^T-\boldBr_N\widehat{\boldB}_N&=\mathbf{0}.\label{reduced_remainder_sylvester}
\end{align}
Let $\widetilde{\mathbf{S}}_N$ be defined as the solution to the equation
\begin{equation}
\widetilde{\mathbf{S}}_N(-\widetilde{\mathbf{\Lambda}}_N)-\boldAr_N\widetilde{\mathbf{S}}_N=\sum \limits_{j=1}^m \boldNr_{j,N}\Big(\sum \limits_{k=1}^{N-1} \boldVr_k\Big)\widehat{\boldN}_{j_N}+\boldBr_N\widehat{\boldB}^T_N,
\end{equation}
where the matrices $\boldV_k \in \reals^{r \times r}$ are constructed iteratively in the same manner as the matrices $\boldV_k$ in steps 3.) and 4.) of TB-IRKA only in this case using the reduced-dimension parameters.
 Then
\begin{equation}\label{true_solution}
\widetilde{\mathbf{S}}_N(-\widetilde{\mathbf{\Lambda}}_N)-\boldAr_N\widetilde{\mathbf{S}}_N-\sum \limits_{j=1}^m\boldNr_{j,N}\widetilde{\mathbf{S}}_N\widehat{\boldN}_{j,N}^T+ \sum\limits_{j=1}^m\boldNr_{j,N}\boldVr_N\widehat{\boldN}_{j,N}^T-\boldBr_N\widehat{\boldB}_N^T=\mathbf{0}.
\end{equation}
Subtracting equation (\ref{true_solution}) from (\ref{reduced_remainder_sylvester}) gives
\begin{align*}
(\boldI_r-\widetilde{\mathbf{S}}_N)(-\widetilde{\mathbf{\Lambda}}_N)&-\boldAr_N(\boldI_r-\widetilde{\mathbf{S}}_N)-\sum \limits_{j=1}^m\boldNr_{j,N}(\boldI_r-\widetilde{\mathbf{S}}_N)\widehat{\boldN}_{j,N}^T\\
&+\sum \limits_{j=1}^m\Big((\mathbf{U}_N^T\mathbf{S}_N)^{-1}\mathbf{U}_N^T\boldN_{j,N}\boldV_n\widehat{\boldN}_{j,N}^T-\boldNr_{j,N}\boldVr_n\widehat{\boldN}_{j,N}^T\Big)=\mathbf{0}.
\end{align*}
From the assumption that $\widetilde{\zeta}_N \rightarrow \widetilde{\zeta}^*$ strongly it follows in a straightforward manner that $\lim \limits_{N \rightarrow \infty} \|\boldV_N\|=\lim \limits_{N \rightarrow \infty} \|\boldVr_N\|=0$.  Hence we have that $\lim \limits_{N \rightarrow \infty} \widetilde{\mathbf{S}}_N=\boldIr$.  An argument similar to the one given above yields $\lim \limits_{N \rightarrow \infty} \|\widetilde{\mathbf{U}}_N-\mathbf{S}_N^T\mathbf{U}_N\|=0$. Now let $\mathbf{\Gamma}_N=\widetilde{\mathbf{S}}_N-\boldI_r$.  To prove 1.) observe that 
\begin{align}
\text{trace}(\boldCr_N\widetilde{\mathbf{S}}_N\bolde_i\bolde_j^T) 
=\text{trace}(\boldC\mathbf{S}_N(\boldI_r+\mathbf{\Gamma}_N) \bolde_i\bolde_j^T).
\end{align}
Thus
\begin{align}
\text{trace}(\boldC\mathbf{S}_N\bolde_i\bolde_j^T)+\text{trace}(\boldC\mathbf{\Gamma}_N\bolde_i\bolde_j^T)
=\text{trace}(\boldC\mathbf{S}_N\bolde_i\bolde_j^T)+\epsilon_N.
\end{align}
Next we prove 3.). Let $\mathbf{\Theta}_N=\mathbf{S}_N^T\mathbf{U}_N$ and let $\mathbf{R}_N=\mathbf{\Theta}_N-\widetilde{\mathbf{U}}_N$.  Then
\begin{align}
\widetilde{\mathbf{U}}_N(:,i)^T&\boldNr_{u,N}\widetilde{\mathbf{S}}_N(:,j)\nonumber\\
=&\Big(\mathbf{\Theta}_N(:,i)^T+\mathbf{R}(:,i)^T\Big)\mathbf{\Theta}^{-1}\mathbf{U}_N^{T}\boldN_u\mathbf{S}_N(\boldI_r(:,j)+\mathbf{\Gamma}(:,j)_N)\nonumber \\
=&\mathbf{U}_N(:,i)^T\boldN_u\mathbf{S}_N(:,j)+\epsilon_N.
\end{align}
Equations 2.) and 4.) are proved in a similar manner.
\end{proof}

\subsection*{Reducing bilinear systems without  a convergent $\mathcal{H}_2$ norm}
It is not uncommon to encounter a bilinear system which does not have a convergent $\mathcal{H}_2$ norm.  For example, the bilinear system approximation to the nonlinear RC circuit model first introduced by Skoogh and Bai \cite{bai2006projection} is a standard benchmark model for testing methods of bilinear model reduction, but this model does not have a convergent $\mathcal{H}_2$ norm.  Other benchmark models, such as bilinear approximations to Burgers' equation are also not $\mathcal{H}_2$ for modest Reynolds numbers; see e.g. \cite{Breiten_H2,breiten2009ms}.   In these situations, there are a few options available.  One technique, as suggested by
\cite{Breiten_H2}, is to scale $\zeta$ by the mapping \[\gamma \mapsto \zeta_{\gamma}:= (\boldA,\text{ }\gamma \boldN_1,\ldots,\text{ }\gamma \boldN_m,\text{ }\gamma \boldb,\text{ } \boldc),\] where $\gamma<1$ is chosen sufficiently small so that $\|\zeta_{\gamma}\|_{ _{\mathcal{H}_2}}< \infty$.   $\mathcal{H}_2$ optimal model reduction is carried out on $\zeta_{\gamma}$, and the original input-output map can be recovered by scaling the inputs $u(t)$ for the original system by $u(t)/\gamma$.  
Frequently the scaling approach yields very accurate approximations for inputs of interest, but there are challenges.  If the system is large enough, it is costly to determine a good scaling parameter.  If $\gamma$ is chosen too small, then the scaled system $\zeta_\gamma$ may function as an essentially linear system and may destroy the advantages of doing model reduction in the original bilinear setting as opposed to the linearized version. 
Another approach is to match some combination of subsystem moments in the hopes of capturing the dominant portion of the Volterra series for the inputs of interest.  

In this paper we will propose another alternative. We note that 
any truncation $\zeta^N$ of the original system has a finite $\mathcal{H}_2^N$ norm. Computing an $\mathcal{H}_2^N$ optimal approximation to $\zeta^N$ using TB-IRKA is therefore another alternative when $\zeta$ is not $\mathcal{H}_2$.  Frequently an $\mathcal{H}_2$ optimal approximation of the first few terms in the Volterra series is sufficiently accurate to match the output of the $\zeta$. In Section \ref{sec:examples}, we will present an example to demonstrate this approach.

\section{Numerical Examples} \label{sec:examples}
In this section, we illustrate  the performance of the proposed method TB-IRKA using four numerical examples.
\subsection{Heat transfer model}
We consider a boundary controlled heat transfer system.  This model has  become a benchmark for testing model reduction methods, and it was first introduced in \cite{benner2011lyapunov}.  The system dynamics are governed by the heat equation subject to Dirichlet and Robin boundary conditions.
\begin{align*}
x_t&=\Delta x & \text{ in } (0,1) \times (0,1),\\
n \cdot \nabla x&=0.8 \cdot u_{1,2,3} (x-1) & \text{ on } \Gamma_1, \Gamma_2, \Gamma_3\\
x&=0.8 \cdot u_4& \text{ on } \Gamma_4,
\end{align*}
where $\Gamma_1, \Gamma_2, \Gamma_3$ and $\Gamma_4$ denote the boundaries of the unit square.  A carefully constructed spatial discretization using $k^2$ grid points yields a bilinear system of order $n=k^2$, with two inputs and one output, chosen to be the average temperature on the grid.  Taking $k=100$, we demonstrate TB-IRKA on a bilinear system of order $n=10,000$, and compare it with B-IRKA for the same system.  Figure \ref{figure:relative_errors_2_4_terms_TB_IRKA_and_BIRKA_heat_transfer} compares the relative $\mathcal{H}_2$ error in TB-IRKA approximations truncated at $N=2$ and $N=4$ terms with the relative error in the  B-IRKA approximation for the same orders.  The figure illustrates that  using $N=4$ terms in the Volterra series yields TB-IRKA approximations that are essentially equivalent to the B-IRKA approximations for all orders.  For $N=2$, their is still relatively little difference between the two approaches for the orders upto $r=16$.  Both B-IRKA and TB-IRKA started from the same initial guess. Next we compare the average time per iteration for all orders of approximation in Figure \ref{figure:average_time_per_iteration_10000_heat_transfer}.   For small reduced orders such as $r=2,4$, B-IRKA is marginally faster, however on average when $N=2$ there was a 62\% decrease in the time per iteration in TB-IRKA compared to B-IRKA and when $N=4$, there was a 30\% decrease in the time per iteration in TB-IRKA compared to B-IRKA.  
 \begin{figure}[hhhh]
\center
\includegraphics[scale=.5]{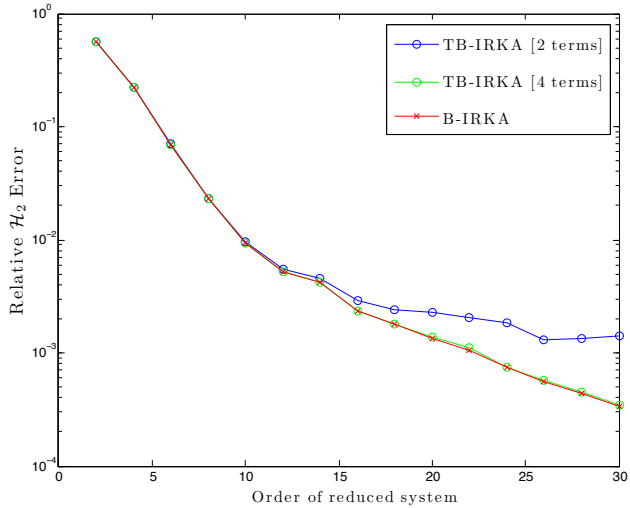}
\caption{Comparison of TB-IRKA and B-IRKA approximations of heat transfer control system}
\label{figure:relative_errors_2_4_terms_TB_IRKA_and_BIRKA_heat_transfer}
\end{figure}
 \begin{figure}[hhhh]
\center
\includegraphics[scale=.5]{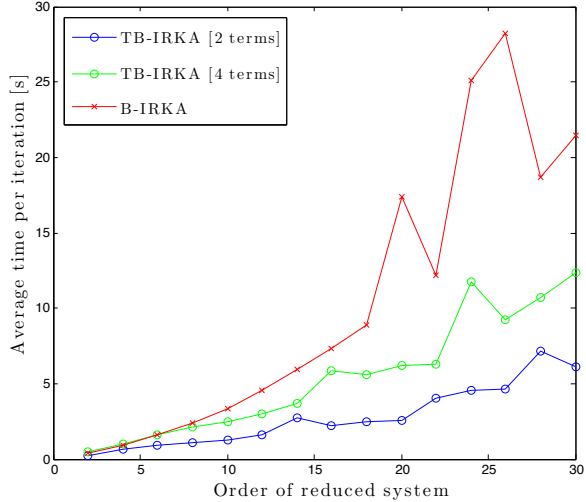}
\caption{Comparison of average time per iteration in TB-IRKA and B-IRKA for several orders}
\label{figure:average_time_per_iteration_10000_heat_transfer}
\end{figure}

\subsection{A bilinear model of the Fokker-Planck equations}
The following example is an application from stochastic control that was first introduced by Hartmann \emph{et. al} in \cite{hartmann_balanced} and later used as a test case for B-IRKA in \cite{Breiten_H2}. Consider a Brownian particle confined by a double-well potential $W(x)=(x^2-1)^2$.  Assume the particle is initially in the left well, and is dragged to the right well.  The particle's motion can be described by the stochastic differential equation
\[dX_t=-\nabla V(X_t, t)dt+ \sqrt{2\sigma}dW_t,\]
with $\sigma=2/3$ and $V(x, u)= W(x, t)+\Phi(x, u_t)=W(x)-xu-x$.  As an alternative to these equations it is noted in \cite{hartmann_balanced} that one can instead determine the underlying probability distribution function
\[\rho(x,t)dx=P[X_t \in [x, x+dx)]\]
which is described by the Fokker-Planck equation
\begin{align*}
\frac{\partial \rho}{\partial t}&=\sigma \Delta \rho+ \nabla \cdot ( \rho \nabla V), &(x, t) \in (a, b) \times (0, T],\\
0&=\sigma \nabla \rho+ \rho \nabla B, &( x, t) \in \{a, b\}\times [0, T],\\
\rho_0&=\rho,& (x,t) \in (a,b) \times 0
\end{align*}
A finite-difference discretization of the Fokker-Planck equations consisting of 500 nodes in the interval $[-2, 2]$ leads to a SISO bilinear system, where the output matrix $\boldc$ is a discretization of the (set-theoretic) characteristic function of the interval $[0.95, 1.05]$.  Figure \ref{figure:Fokker_Planck_13sub} compares the relative $\mathcal{H}_2$ error in the reduced order models computed from B-IRKA and TB-IRKA after truncating at the $13th$ term in the Volterra series. It was necessary to keep this many subsystems because the Volterra series for this model converged somewhat slowly, and so $\mathcal{H}_2$ error in the approximation decayed slowly as well.  As Figure \ref{figure:Fokker_Planck_13sub} demonstrates, TB-IRKA 
replicates the accuracy of B-IRKA very well  for most orders of approximation.  For the orders of approximation $r=2,4$, the average time per iteration of TB-IRKA and B-IRKA was the same, but as the reduced order system grew to between $r=6$ and $r=24$, the average time per iteration for TB-IRKA was 51\% less than for B-IRKA on the average.  Figure \ref{figure:Fokker_Planck_13sub_times} compares the average time per iteration for several reduced orders, illustrating that as $r$ increases,  so do the numerical gains in TB-IRKA.

\begin{figure}[hhhh]
\center
\includegraphics[scale=.6]{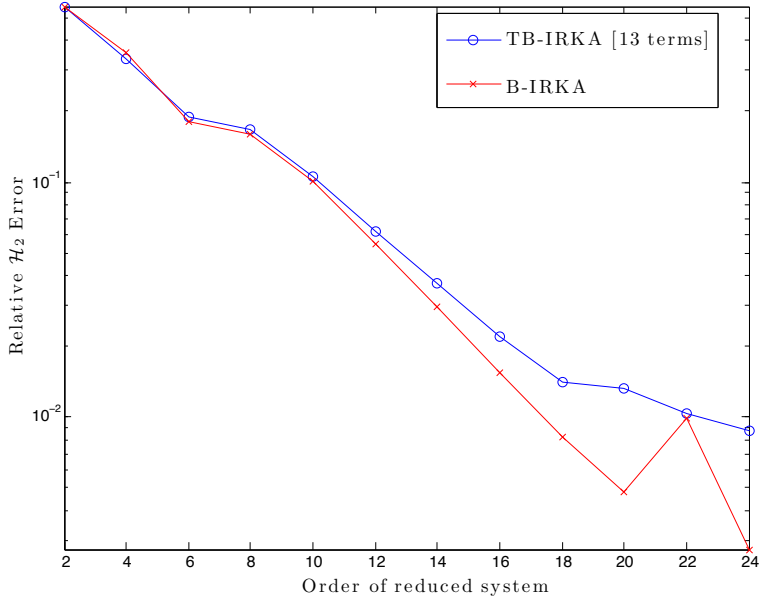}
\caption{Comparison of the relative $\mathcal{H}_2$ error for B-IRKA and TB-IRKA approximations to the Fokker-Planck system}
\label{figure:Fokker_Planck_13sub}
\end{figure}

\begin{figure}[hhhh]
\center
\includegraphics[scale=.6]{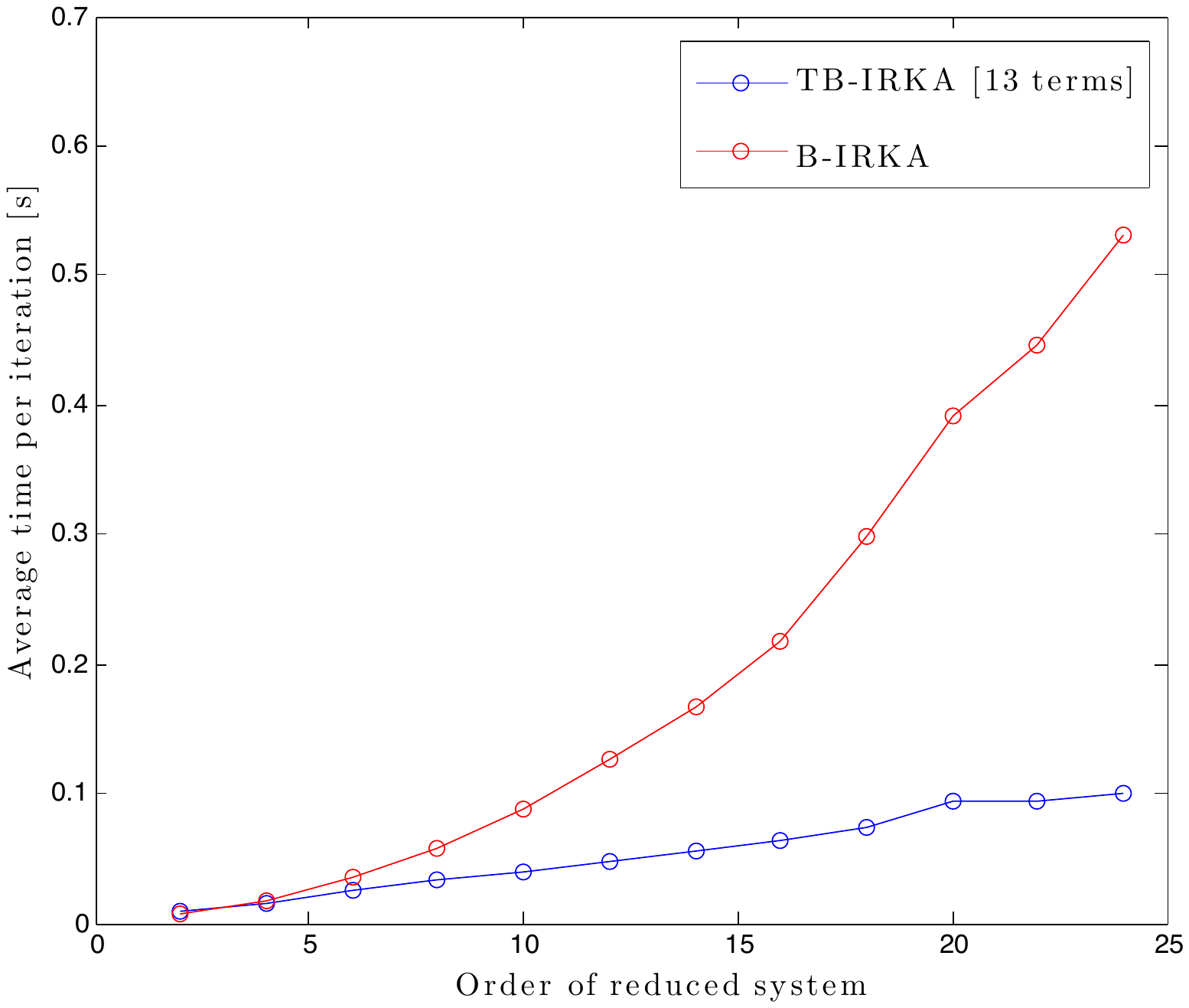}
\caption{Comparison of average time per iteration using B-IRKA and TB-IRKA[13 terms] for the Fokker-Planck system}
\label{figure:Fokker_Planck_13sub_times}
\end{figure}

\subsection{Viscous Burgers' Equation Control System}\label{section:VB}
Another model reduction benchmark originally introduced in \cite{breiten2009ms} is a bilinear control system derived from Burgers' equation.  Consider the viscous Burgers' equation
\begin{align*}
\frac{\partial v}{\partial t}+v\frac{\partial v}{\partial x}= \nu \frac{ \partial^{2} v}{\partial x^2}, & (x, t) \in (0, 1) \times (0, T)
\end{align*}
subject to initial and boundary conditions
\begin{align*}
v(x, 0)= 0, \hspace{20pt}x \in [0, 1],& \hspace{20pt}v(0,t)=u(t), \hspace{20pt}  v(1,t)=0 \hspace{10pt} t \ge 0
\end{align*}

Discretizing Burgers' equation in the spatial variable using $n_0$ nodes in a standard central difference finite difference scheme leads to a system of nonlinear ordinary differential equation where the nonlinearity is quadratic in the state. Measurements of the system are given as the spatial average of $v$. The Carleman linearization  technique applied to this system yields a bilinearized system of dimension $n=n_0+n_0^2$ that exactly matches the input-output behavior of the original nonlinear system, since the nonlinearity is only quadratic. Here we take $\nu=0.01$, corresponding to a Reynolds number of 100 and construct a bilinear system $\zeta$ of order $n=930$.  $\zeta$ is not an $\mathcal{H}_2$ system, which can be checked by observing that the series used to compute its control grammian diverges.  For this example we compare TB-IRKA using the truncation index $N=2$ with the scaled version of B-IRKA.  An order $r=9$ approximation is used to compute the response of both methods to the inputs $u(t)=e^{-t}$ and $u(t)=\sin(20t)$.  The relative error in the output of using the scaling values $\gamma=0.4$ and $\gamma=0.5$  for B-IRKA are compared with the TB-IRKA approximation in Figures \ref{figure:relative_response_error_non_h2_burger_decaying_exponential}, \ref{figure:relative_response_errors_high_oscillating_sine_wave}.  As the figures show, very good approximation results using B-IRKA can be obtained for the right value of $\gamma$; in this case $\gamma=0.4$ yielded good approximations, but the quality of the approximations is fairly sensitive to the choice of $\gamma$ as $\gamma=0.5$ resulted in a poor approximation. On the other hand,  for both inputs the TB-IRKA approximation yields a highly accurate approximation, and indeed, for the input $u(t)=e^{-t}$, yields a smaller $L_\infty$ output error $\big\| \mathbf{y} - \widetilde{\mathbf{y}}\big\|_{L_\infty}$ than B-IRKA.

\begin{figure}[hhhh]
\center
\includegraphics[scale=.5]{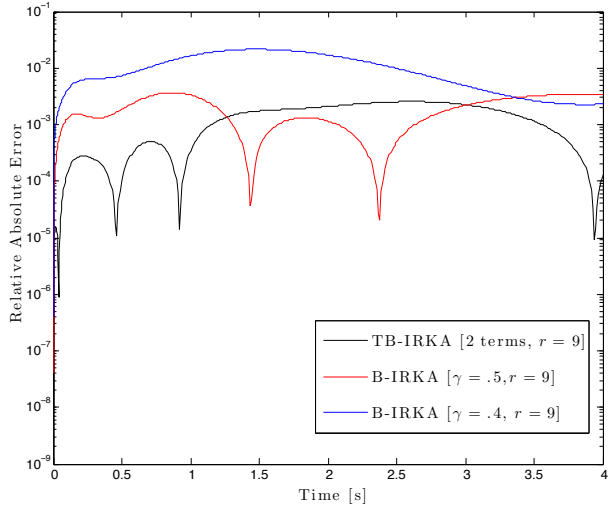}
\caption{\textbf{Burgers' Equation}: A comparison of the  TB-IRKA and scaled B-IRKA output error 
$\big\| \mathbf{y} - \widetilde{\mathbf{y}}\big\|_{L_\infty}$
for the input $u(t)=e^{-t}$. }
\label{figure:relative_response_error_non_h2_burger_decaying_exponential}
\end{figure}

\begin{figure}[hhhh]
\center
\includegraphics[scale=.5]{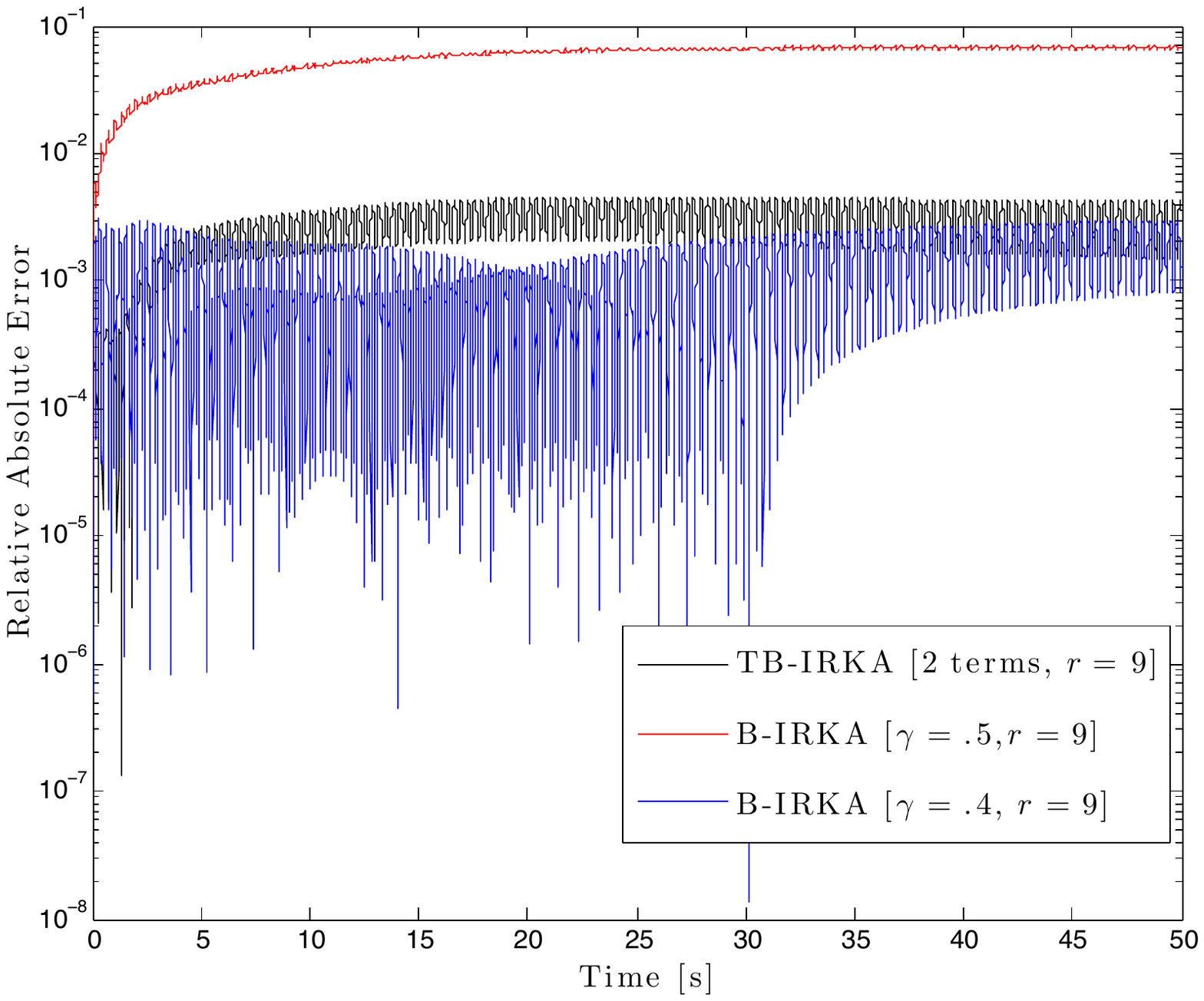}
\caption{\textbf{Burgers' Equation}: A comparison of the TB-IRKA and scaled B-IRKA output error $\big\| \mathbf{y} - \widetilde{\mathbf{y}}\big\|_{L_\infty}$ for the input $u(t)=\sin(20t).$ }
\label{figure:relative_response_errors_high_oscillating_sine_wave}
\end{figure}

\subsection{A parameter-varying convection-diffusion problem}
Benner and Breiten showed  \cite{benner2011h2} that certain classes of parameter-varying linear systems can be effectively approximated over the desired range of parameters by appropriately reformulating the linear system  as a bilinear system.  Here we carry out this approach for a parameter-varying convection-diffusion problem from \cite{baur2011interpolatory}.  The model is governed by the standard convection-diffusion equations
\begin{align*}
&\frac{\partial \boldsymbol{x} }{\partial t}(t, \boldsymbol{\xi})=p_0\Delta\boldsymbol{x}(t, \boldsymbol{\xi})+\sum \limits_{i=1}^2 p_i\nabla \boldsymbol{x}(t,\boldsymbol{\xi})+\boldsymbol{b}(\boldsymbol{\xi})u(t),\\ &\boldsymbol{\xi} \in [0,1]\times[0,1], t\in (0, \infty),&\hspace{-70pt}\boldsymbol{x}(t,\boldsymbol{\xi})=0, \xi \in \partial ([0,1]\times[0,1])
\end{align*}
and the parameters $p_0, p_1, p_2$ need to be adjusted to capture the particular physics that is being modeled.  %The range of interest for the parameters is $p_0\in [0.1, 1]$, $p_1,p_2 \in [0, 1]$.
After a finite-difference discretization in the spatial variable $\xi$, we obtain the linear parameter-varying dynamical system
\begin{align}
\dot{\boldsymbol{x}}(t)&=p_0\boldA_0\boldsymbol{x}(t)+\sum \limits_{i=1}^2\boldA_i\boldsymbol{x}(t) p_i+\boldb u(t) \label{LPV_sys}\\
y&=\boldc\boldsymbol{x}(t) \nonumber,
\end{align}
where $\boldc=\bolde_n$ is chosen as the observation matrix. This system can be viewed as a bilinear system where the parameters $p_1$ and $p_2$ are particular system inputs.  We can rewrite system (\ref{LPV_sys}) as a bilinear system with three inputs and one output:

\begin{align*}
\dot{\boldsymbol{x}}&=\boldsymbol{A}\boldsymbol{x}+\sum \limits_{k=1}^3\boldN_k\boldsymbol{x}u_k(t)+\boldB\boldsymbol{u}(t)\\
y(t)&=\boldc\boldsymbol{x}(t)
\end{align*}
 with \boldA=$p_0 \boldA_0$, $\boldN_1=\boldA_1$, $\boldN_2=\boldA_2$, $\boldN_3=\boldsymbol{0}$, $\boldB=[\boldsymbol{0}, \boldb]\in \reals^{n\times 3}$, for inputs  of interest $\boldsymbol{u}(t)=[ p_1,p_2, u(t)]^T$.  

The parameter range of interest is $p_0\in[0.1, 1]$, $p_1,p_2 \in [0, 1]$.  Taking $p_0=1$, we compute TB-IRKA approximations keeping $2, 3,$ and $6$ terms in the Volterra series, and compare them with the B-IRKA approximation to the full bilinear system.  Each of these approximations are of dimension  $r=12$.  To place the reduced bilinear system matrices back into the linear parameter-varying formulation we use $p_0 \widetilde{\boldA}_0 = \boldAr \in \reals^{r \times r}$, $\widetilde{\boldA}_1 = \boldNr_1 \in \reals^{r\times r}$ and $\widetilde{\boldA}_2 = \boldNr_2 \in \reals^{r\times r}$ as the reduced-dimension matrices that approximate the linear parameter-varying system (\ref{LPV_sys}).    In order to evaluate the accuracy of the approximations,  we vary the parameters $p_1$ and $p_2$ over the whole parameter range of interest, and for each selection of parameters we compute the relative $\mathcal{H}_2$ norm of the error between the full and reduced dimension systems for that choice of parameters.  The surfaces plotted in Figure \ref{figure:p_0_is_one_comparison} show how the relative $\mathcal{H}_2$ error of the linear systems varies over the parameter values.  As Figure  \ref{figure:p_0_is_one_comparison} shows, TB-IRKA with $N=2$ actually gives the best approximation error over the parameter space, and the approximation error increases as the number of terms kept in the Volterra series increases; with B-IRKA giving, in this case, the largest errors over the parameter space.  We believe this is due to the fact B-IRKA is actually a better approximation over the whole $\mathcal{L}_2$ unit ball of inputs for the bilinear reduction, and thus it gives up  accuracy for these particular inputs once converted back to the parametric linear system. Moreover,
we certainly do {\it not} claim that TB-IRKA will always yield smaller error for reducing parametric linear models. We note that regardless, all four reduced models give very accurate approximations with relative errors in the order of $10^{-4}$.   Next 
we take $p_0=0.5$ and compute two reduced models of order $r=12$ using TB-IRKA with $N=2$ and B-IRKA approximation, both of dimension $r=12$.  Figure \ref{figure:p_0_is_point_five_comparison} shows the relative $\mathcal{H}_2$ error in the linear systems over the parameter range for $p_0=0.5$.   Again for this case, TB-IRKA yields a smaller approximation error than B-IRKA, and both yield nearly uniform error over the range of parameters.  
\begin{figure}[hhhh]
\center
\includegraphics[scale=.4]{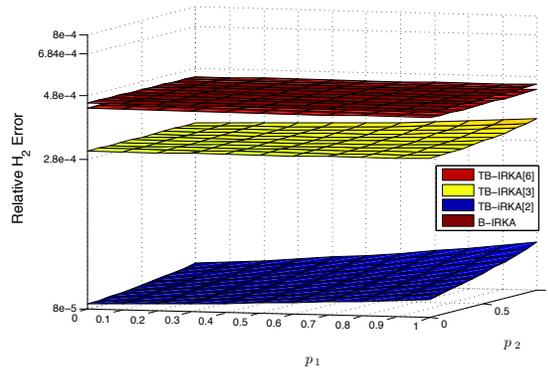}
\caption{\textbf{Convection-diffusion problem}:  Comparison of the relative $\mathcal{H}_2$ error in the B-IRKA and TB-IRKA[2, 3 and 6 terms] approximations taking $p_0=1$ and varying over the parameter range for $p_1$ and $p_2$}
\label{figure:p_0_is_one_comparison}
\end{figure}

\begin{figure}[hhhh]
\center
\includegraphics[scale=.35]{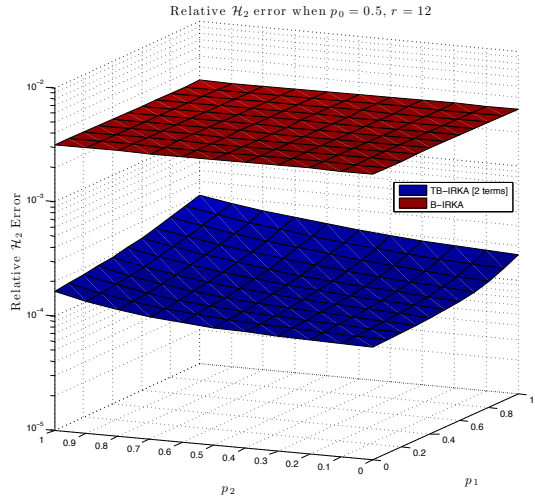}
\caption{\textbf{Convection-diffusion problem}:  Comparison of the relative $\mathcal{H}_2$ error in the B-IRKA and TB-IRKA[2 terms] approximations taking $p_0=0.5$ and varying over the parameter range for $p_1$ and $p_2$.}
\label{figure:p_0_is_point_five_comparison}
\end{figure}

\section{Conclusions} \label{sec:conc}
We have introduced an interpolation framework for model reduction of large-scale bilinear systems where reduced model enforces multipoint interpolation of the underlying Volterra series as opposed to interpolating some of the leading subsystem transfer functions as done in the existing approaches. We show that this new interpolation framework
is directly related to optimal $\mathcal{H}_2$ model reduction of bilinear systems as we proved that
the $\mathcal{H}_2$ requires multivariate Hermite interpolation in terms of the  Volterra series framework. 
Finally,  based on the multipoint interpolation on the truncated Volterra series representation, we  have introduced a model reduction algorithm leading to an asymptotically optimal approach to $\mathcal{H}_2$ optimal model reduction of bilinear systems. Several numerical examples  demonstrate the effectiveness of the proposed  approach.

\section{Acknowledgements}
The authors thank Prof. P. Benner and Dr. T. Breiten for providing the data for several numerical examples and also their Matlab implementation of B-IRKA. 

\bibliographystyle{siam}
\bibliography{references}

\begin{thebibliography}{10}

\bibitem{antoulas2005approximation}
{\sc A.C. Antoulas}, {\em {Approximation of Large-Scale Dynamical Systems
  (Advances in Design and Control)}}, Society for Industrial and Applied
  Mathematics, Philadelphia, PA, USA, 2005.

\bibitem{bai2006projection}
{\sc Z.~Bai and D.~Skoogh}, {\em A projection method for model reduction of
  bilinear dynamical systems}, Linear algebra and its applications, 415 (2006),
  pp.~406--425.

\bibitem{baur2011interpolatory}
{\sc U.~Baur, C.~Beattie, P.~Benner, and S.~Gugercin}, {\em Interpolatory
  projection methods for parameterized model reduction}, SIAM Journal on
  Scientific Computing, 33 (2011), p.~2489.

\bibitem{beatson2007integral}
{\sc Rick~K Beatson and Michael~K Langton}, {\em Integral interpolation}, in
  Algorithms for Approximation, Springer, 2007, pp.~199--218.

\bibitem{benner2010krylov}
{\sc P.~Benner and T.~Breiten}, {\em {Krylov-Subspace Based Model Reduction of
  Nonlinear Circuit Models Using Bilinear and Quadratic-Linear
  Approximations}}, Progress in Industrial Mathematics at ECMI,  (2010).

\bibitem{benner2011h2}
\leavevmode\vrule height 2pt depth -1.6pt width 23pt, {\em On
  $\mathcal{H}_2$-model reduction of linear parameter-varying systems},
  Proceedings in Applied Mathematics and Mechanics, 11 (2011), pp.~805--806.

\bibitem{Breiten_H2}
\leavevmode\vrule height 2pt depth -1.6pt width 23pt, {\em Interpolation-based
  $\mathcal{H}_2$-model reduction of bilinear control systems}, SIAM Journal on
  Matrix Analysis and Applications, 33 (2012), pp.~859--885.

\bibitem{benner2011lyapunov}
{\sc P.~Benner and T.~Damm}, {\em Lyapunov equations, energy functionals, and
  model order reduction of bilinear and stochastic systems}, SIAM Journal on
  Control and Optimization, 49 (2011), p.~686.

\bibitem{benner_sylvester}
{\sc P.~Benner, M.~K\"{o}hler, and J.~Saak}, {\em Sparse-dense sylvester
  equations in $\mathcal{H}_2$-model order reduction}, Tech. Report
  MPIMD/11-11, Max Planck Institute Magdeburg Preprints, December 2011.

\bibitem{breiten2009ms}
{\sc Tobias Breiten}, {\em {Krylov Subspace Methods for Model Order Reduction
  of Bilinear Control Systems}}, master's thesis, Technical University of
  Kaiserslautern, Department of Mathematics, November 2009.

\bibitem{breiten2010krylov}
{\sc T.~Breiten and T.~Damm}, {\em Krylov subspace methods for model order
  reduction of bilinear control systems}, Systems \& Control Letters,  (2010).

\bibitem{hartmann_balanced}
{\sc B.~Schaeffer-Bung C.~Hartmann and A.~Zueva}, {\em Balanced model reduction
  of bilinear systems with applications to positive systems}, submitted to SIAM
  J. Control and Optimization,  (2010).

\bibitem{flagg2012interpolation}
{\sc G.M. Flagg}, {\em Interpolation Methods for the Model Reduction of
  Bilinear Systems}, PhD thesis, Virginia Polytechnic Institute and State
  University, 2012.

\bibitem{gugercinprojection}
{\sc S.~Gugercin}, {\em Projection methods for model reduction of large-scale
  dynamical systems}, PhD thesis, Ph. D. Dissertation, ECE Dept., Rice
  University, 2002.

\bibitem{H2}
{\sc S.~Gugercin, A.C. Antoulas, and C.~Beattie}, {\em $\mathcal{H}_2$ model
  reduction for large-scale linear dynamical systems}, SIAM Journal on Matrix
  Analysis and Applications, 30 (2008), pp.~609--638.

\bibitem{meieriii1967approximation}
{\sc L.~Meier~III and D.~Luenberger}, {\em Approximation of linear constant
  systems}, Automatic Control, IEEE Transactions on, 12 (1967), pp.~585--588.

\bibitem{mohler1970natural}
{\sc R.R. Mohler}, {\em Natural bilinear control processes}, Systems Science
  and Cybernetics, IEEE Transactions on, 6 (1970), pp.~192--197.

\bibitem{mohler}
\leavevmode\vrule height 2pt depth -1.6pt width 23pt, {\em {Nonlinear systems
  (vol. 2): applications to bilinear control}}, Prentice-Hall, Inc. Upper
  Saddle River, NJ, USA, 1991.

\bibitem{phillips2003projection}
{\sc J.R. Phillips}, {\em Projection-based approaches for model reduction of
  weakly nonlinear, time-varying systems}, Computer-Aided Design of Integrated
  Circuits and Systems, IEEE Transactions on, 22 (2003), pp.~171--187.

\bibitem{rudin1969function}
{\sc W.~Rudin}, {\em Function theory in polydiscs}, Mathematics Lecture Note
  Series, 1969.

\bibitem{Rugh}
{\sc W.J. Rugh}, {\em Nonlinear system theory}, Johns Hopkins University Press
  Baltimore, MD, 1981.

\bibitem{weiner1980sinusoidal}
{\sc D.D. Weiner and J.F. Spina}, {\em Sinusoidal Analysis and Modeling of
  Weakly Nonlinear Circuits: With Application to Nonlinear Interference
  Effects}, Van Nostrand Reinhold, 1980.

\bibitem{wilson1970optimum}
{\sc D.A. Wilson}, {\em Optimum solution of model-reduction problem}, Proc.
  IEE, 117 (1970), pp.~1161--1165.

\bibitem{zhang2002h2}
{\sc L.~Zhang and J.~Lam}, {\em On $\mathcal{H}_2$ model reduction of bilinear
  systems}, Automatica, 38 (2002), pp.~205--216.

\end{thebibliography}

\setcounter{section}{0}
 \renewcommand{\thesection}{\Alph{section}}
 \section{Proof of Theorem \ref{h2conditions_truncated}} \label{appendix_proof}
 The proof follows by differentiating the error expression $E_N$ with respect to the parameters $\widetilde{\mathbf{\Lambda}}, \widehat{\boldN}, \widehat{\boldb}, \widehat{\boldc}$ and making use of Lemma \ref{lemma:product_rule} (taking $\mathbf{G}_k=\mathbf{0}$ for $k=1,\dots, m$ and $\mathbf{K}=\boldN$ in Lemma \ref{lemma:product_rule}) and the permutation matrix $\mathbf{M}$ in (\ref{permutation_matrix}). 
We start with differentiating $E_N$ with respect to the entries of $\widehat{\boldC}$ to obtain
 \begin{align}
 \frac{\partial E_N}{\partial \widehat{\boldC}_{i,j}}= 2vec(\boldI_p)^T \Big([\mathbf{0}, -\bolde_i\bolde_j^T]&\otimes[\boldC -\widehat{\boldC}]\Big)  \sum \limits_{k=0}^N\Big[\Big(-\cbfAhat\otimes\cbfI-\cbfI\otimes\cbfAhat\Big)^{-1} \Big(\sum \limits_{j=1}^m  \cbfNhat_j\otimes\cbfNhat_j\Big)\Big]^k\nonumber\\
&\times \Big(-\cbfAhat\otimes\cbfI-\cbfI\otimes\cbfAhat\Big)^{-1} \begin{bmatrix}\boldB\\ \widehat{\boldB}\end{bmatrix} \otimes \begin{bmatrix}\boldB\\ \widehat{\boldB}\end{bmatrix}vec(\boldI_m), \nonumber \\
= 2vec(\boldI_p)^T \Big([\mathbf{0}, -\bolde_i\bolde_j^T]&\otimes[\boldC -\widetilde{\boldC}]\Big)  \sum \limits_{k=0}^N\Big[\Big(-\cbfAhat\otimes\cbfI-\cbfI\otimes\cbfAtilde\Big)^{-1} \Big(\sum \limits_{j=1}^m  \cbfNhat_j\otimes\cbfNtilde_j\Big)\Big]^k\nonumber\\
&\times \Big(-\cbfAhat\otimes\cbfI-\cbfI\otimes\cbfAtilde\Big)^{-1} \begin{bmatrix}\boldB\\ \widehat{\boldB}\end{bmatrix} \otimes \begin{bmatrix}\boldB\\ \widetilde{\boldB}\end{bmatrix}vec(\boldI_m), \nonumber
\end{align}
where 
$$
\widehat{\cbfA\,} = \begin{bmatrix}\boldA& \mathbf{0}\\ \mathbf{0} & \widetilde{\mathbf{\Lambda}}\end{bmatrix},\,
\widetilde{\cbfA\,} = \begin{bmatrix}\boldA& \mathbf{0}\\ \mathbf{0} & \widetilde{\mathbf{\boldA}}\end{bmatrix},\,
\cbfI = \begin{bmatrix}\boldI_n& \mathbf{0}\\ \mathbf{0} & \boldI_r\end{bmatrix},\,
\widehat\cbfN_j = \begin{bmatrix}\boldN_j& \mathbf{0}\\ \mathbf{0}&\widehat{\boldN}_j\end{bmatrix},\,
\,{\rm and}~\widetilde\cbfN_j = \begin{bmatrix}\boldN_j& \mathbf{0}\\ \mathbf{0}&\widetilde{\boldN}_j\end{bmatrix}.
$$
Continuing these tedious manipulations leads to
\begin{align}
 \frac{\partial E_N}{\partial \widehat{\boldC}_{i,j}} =2vec(\boldI_p)^T \Big( -\bolde_i\bolde_j^T\otimes[\boldC -\boldCr]\Big)&  \sum \limits_{k=0}^N\Big[\Big(-\widetilde{\mathbf{\Lambda}}\otimes \cbfI-\boldI_r\otimes\cbfAtilde\Big)^{-1}\Big(\sum \limits_{j=1}^m\widehat{\boldN}_j\otimes\cbfNtilde_j\Big)\Big]^k \nonumber\\
&\times\Big(-\widetilde{\mathbf{\Lambda}}\otimes\cbfI-\boldI_r\otimes\cbfAtilde\Big)^{-1}\widehat{\boldB}\otimes \begin{bmatrix}\boldB\\ \boldBr\end{bmatrix}vec(\boldI_m)\nonumber\\
=2vec(\boldI_p)^T \Big( -\bolde_i\bolde_j^T\otimes[\boldC -\boldCr]&\Big) \mathbf{M}^T \sum \limits_{k=0}^N\Big[\big(\cbfI_{\boldsymbol{\Lambda}}-\cbfI_\boldA\big)^{-1}\nonumber
\Big( \sum\limits_{j=1}^m \cbfM_j\Big)\Big]^k\\
&
\times\big(\cbfI_{\boldsymbol{\Lambda}}-\cbfI_\boldA\big)^{-1}
\mathbf{M}\Bigg( \widehat{\boldB}\otimes \begin{bmatrix}\boldB\\ \boldBr\end{bmatrix}\Bigg)vec(\boldI_m),\nonumber
\end{align}
where 
$$
\cbfI_{\boldsymbol{\Lambda}} = 
\begin{bmatrix}-\widetilde{\mathbf{\Lambda}}\otimes\boldI_n& \mathbf{0}\\ \mathbf{0} &-\widetilde{\mathbf{\Lambda}}\otimes \boldI_r\end{bmatrix},\,
\cbfI_\boldA = 
\begin{bmatrix}\boldI_r\otimes \boldA& \mathbf{0}\\ \mathbf{0} & \mathbf{I}_r\otimes \boldAr\end{bmatrix},\,
{\rm and}~
\cbfM_j = 
\begin{bmatrix}\widehat{\boldN}_j\otimes \boldN_j& \mathbf{0}\\ \mathbf{0}&\widehat{\boldN}_j\otimes\boldNr_j\end{bmatrix}.
$$
Seperating the $\boldC$ and $\boldCr$ terms finally leads to
\begin{align}
 \frac{\partial E_N}{\partial \widehat{\boldC}_{i,j}}=-2vec(\boldI_p)^T&(\bolde_i\bolde_j^T \otimes \boldC) \sum \limits_{k=0}^N\bigg[\bigg(-\widetilde{\boldsymbol{\Lambda}}\otimes \boldI_n-\boldI_r\otimes \boldA\bigg)^{-1} \sum_{j=1}^m \widehat{\boldN}_j\otimes \boldN_j\bigg]^k\nonumber\\
&\times\bigg(-\widetilde{\boldsymbol{\Lambda}}\otimes \boldI_n-\boldI_r\otimes \boldA\bigg)^{-1}\Big(\widehat{\boldB}\otimes\boldB\Big)vec(\boldI_m) \nonumber\\
+2vec(\boldI_p)&(\bolde_i\bolde_j^T \otimes \boldCr) \sum \limits_{k=0}^N \bigg[\bigg(-\widetilde{\boldsymbol\Lambda}\otimes \boldI_r-\boldI_r\otimes \boldAr\bigg)^{-1} \sum \limits_{j=1}^m\widehat{\boldN}_j\otimes \boldNr_j\bigg]^k\nonumber\\
&\times\bigg(-\widetilde{\boldsymbol\Lambda}\otimes \boldI_n-\boldI_r\otimes \boldA\bigg)^{-1}\Big(\widehat{\boldB}\otimes\boldBr\Big)vec(\boldI_m). \label{truncated_differentiate_wrt_c}
\end{align}
Setting expression (\ref{truncated_differentiate_wrt_c}) equal to zero, we arrive at the necessary condition
\begin{align*}
vec(\boldI_p)^T(\bolde_i\bolde_j^T \otimes \boldC) \sum \limits_{k=0}^N&\bigg[\bigg(-\widetilde{\boldsymbol\Lambda}\otimes \boldI_n-\boldI_r\otimes \boldA\bigg)^{-1} \sum_{j=1}^m \widehat{\boldN}_j\otimes \boldN_j\bigg]^k
\\
&\times \bigg(-\widetilde{\boldsymbol\Lambda}\otimes \boldI_n-\boldI_r\otimes \boldA\bigg)^{-1}\Big(\widehat{\boldB}\otimes\boldB\Big)vec(\boldI_m) \nonumber\\
=vec(\boldI_p)^T(\bolde_i\bolde_j^T &\otimes \boldCr) \sum \limits_{k=0}^N \bigg[\bigg(-\widetilde{\boldsymbol\Lambda}\otimes \boldI_r-\boldI_r\otimes \boldAr\bigg)^{-1} \sum \limits_{j=1}^m\widehat{\boldN}_j\otimes \boldNr_j\bigg]^k \\
&\times\bigg(-\widetilde{\boldsymbol\Lambda}\otimes \boldI_n-\boldI_r\otimes \boldA\bigg)^{-1}\Big(\widehat{\boldB}\otimes\boldBr\Big)vec(\boldI_m).
%\label{necessary_c}
\end{align*}
Unwinding these Kronecker product expressions, leads to (\ref{Sylvester_necessary_C}). 

Next, we first define 
$
\cbfE = \begin{bmatrix}\mathbf{0} &\mathbf{0}\\ \mathbf{0} & \bolde_i \bolde_i^T\end{bmatrix}
$
and then differentiate $E_N$ with respect to the entries of $\widetilde{\boldsymbol\Lambda}$ to obtain
\begin{align}
\frac{\partial E_N}{\partial \widetilde{\lambda}_i}=vec(\boldI_p&)^T\Big([\boldC -\widehat{\boldC}]\otimes[\boldC -\widehat{\boldC}]\Big)  \sum \limits_{k=0}^N \sum \limits_{l=0}^{k}\Big[\Big(-\cbfAhat\otimes\cbfI-\cbfI\otimes\cbfAhat\Big)^{-1}\sum \limits_{j=1}^m\cbfNhat_j\otimes\cbfNhat_j\Big]^l\nonumber\\
&\times\Big(-\cbfAhat\otimes\cbfI-\cbfI\otimes\cbfAhat\Big)^{-1}\bigg(\cbfE \otimes \cbfI\bigg) 
\Big[\Big(-\cbfAhat\otimes\cbfI-\cbfI\otimes\cbfAhat\Big)^{-1}\sum\limits_{j=1}^m\cbfNhat_j\otimes\cbfNhat_j\Big]^{k-l}\nonumber\\
&\times\Big(-\cbfAhat\otimes\cbfI-\cbfI\otimes\cbfAhat\Big)^{-1}\begin{bmatrix}\boldB\\ \widehat{\boldB}\end{bmatrix}\otimes\begin{bmatrix}\boldB\\ \widehat{\boldB}\end{bmatrix}vec(\boldI_m).\nonumber
\end{align}
Further simplifications and manipulations yield 
\begin{align}
%%%%%%%%%%%%%%%%%%%%%%%%%%%%%%%%%%%%%%%%%%
\frac{\partial E_N}{\partial \widetilde{\lambda}_i}&=vec(\boldI_p)^T\Big([\boldC -\widehat{\boldC}]\otimes[\boldC -\boldCr]\Big)  \sum \limits_{k=0}^N \sum \limits_{l=0}^{k}\Big[\Big(-\cbfAhat\otimes\cbfI-\cbfI\otimes\cbfAtilde\Big)^{-1}\sum \limits_{j=1}^m\cbfNhat_j\otimes\cbfNtilde_j\Big]^l\nonumber\\
&\qquad\times\Big(-\cbfAhat\otimes\cbfI-\cbfI\otimes\cbfAtilde\Big)^{-1}\big(\cbfE \otimes \cbfI\big)\Big[\Big(-\cbfAhat\otimes\cbfI-\cbfI\otimes\cbfAhat\Big)^{-1}\sum \limits_{j=1}^m\cbfNhat_j\otimes\cbfNtilde_j\Big]^{k-l}\nonumber\\
&\qquad\times\Big(-\cbfAhat\otimes\cbfI-\cbfI\otimes\cbfAtilde\Big)^{-1}\begin{bmatrix}\boldB\\ \widehat{\boldB}\end{bmatrix}\otimes\begin{bmatrix}\boldB\\ \boldBr\end{bmatrix}vec(\boldI_m)\nonumber\\
%%%%%%%%%%%%%%%%%%%%%%%%%%%%%%%%%%%%%%%%%%%%%%%%%%%%
=&vec(\boldI_p)^T\Big(-\widehat{\boldC}\otimes[\boldC -\boldCr]\Big)  \sum \limits_{k=0}^N \sum \limits_{l=0}^{k}\Bigg[\Bigg(-\widetilde{\mathbf{\boldsymbol\Lambda}}\otimes\cbfI-\boldI_r\otimes\cbfAtilde\Bigg)^{-1}\sum \limits_{j=1}^m\widehat{\boldN}_j\otimes\cbfNtilde_j\Bigg]^l\nonumber\\
\displaybreak[0]
%%%%%%%%Middle Term
&~~\times\Big(-\widetilde{\mathbf{\Lambda}}\otimes\cbfI- \boldI_r\otimes\cbfAtilde\Big)^{-1}\big(\bolde_i \bolde_i^T\otimes \cbfI\big)\Big[\Big(-\widetilde{\mathbf{\boldsymbol\Lambda}}\otimes\cbfI-\boldI_r\otimes\cbfAtilde\Big)^{-1}\sum_{j=1}^m\widehat{\boldN}_j\otimes\cbfNtilde_j\Big]^{k-l}\nonumber\\
&~~\times\Big(-\widetilde{\mathbf{\boldsymbol\Lambda}}\otimes\cbfI-\boldI_r\otimes\cbfAtilde\Big)^{-1}\big(\widehat{\boldB}\otimes\begin{bmatrix}\boldB\\ \boldBr\end{bmatrix}\big)vec(\boldI_m).\nonumber
\end{align}
Then, we employ the properties of the permutation matrix $\mathbf{M}$ to obtain  
\begin{align}
%%%%%%%%%%%%%%%%%%%%%%%%%%%%%%%%%%%%%%%%%%%%%%%%%%%%%%%%%%%%%%%%%%%%%%%%%
\frac{\partial E_N}{\partial \widetilde{\lambda}_i}=vec&(\boldI_p)^T\Big(-\widehat{\boldC}\otimes[\boldC -\boldCr]\Big) \mathbf{M}^T \nonumber \\ 
&\times \sum \limits_{k=0}^N \sum \limits_{l=0}^{k}\Big[\Big(\cbfI_{\boldsymbol{\Lambda}}-\cbfI_{\boldA}\Big)^{-1} \sum \limits_{j=1}^m\cbfM_j\Big]^l
%%%%%%%%Middle Term
\Big(\cbfI_{\boldsymbol{\Lambda}}-\cbfI_{\boldA}\Big)^{-1}\nonumber
\big(\cbfI_\bolde\big) \Big[\Big(\cbfI_{\boldsymbol{\Lambda}}-\cbfI_{\boldA}\Big)^{-1} \sum_{j=1}^m\cbfM_j\Big)\Big]^{k-l}
\nonumber\\
&\times \Big(\cbfI_{\boldsymbol{\Lambda}}-\cbfI_{\boldA}\Big)^{-1}\begin{bmatrix}\widehat{\boldB}\otimes\boldB\\ \widehat{\boldB}\otimes\boldBr\end{bmatrix}vec(\boldI_m),\nonumber
\end{align}
where 
$
\cbfI_\bolde = \begin{bmatrix}\bolde_i \bolde_i^T\otimes \mathbf{I}_n &\mathbf{0}\\ \mathbf{0}&\bolde_i \bolde_i^T\otimes \mathbf{I}_r\end{bmatrix}.
$ Then,  further expanding the terms yields
\begin{align}
%%%%%%%%%%%%%%%%%%%%%%%%%%%%%%%%%%%%%%%%%%%%%%%%%%%%%%%%%%%%%%%
\frac{\partial E_N}{\partial \widetilde{\lambda}_i}=-2vec(\boldI_p)^T&\Big(\widehat{\boldC}\otimes \boldC\Big) \sum \limits_{k=1}^N \sum \limits_{l=0}^{k}\Big( \mathbf{L}^{-1}\mathbf{K}\Big)^{l}\mathbf{L}^{-1}\Big(\bolde_i\bolde_i^T\otimes \boldI_n\Big)\nonumber\\
&
\times\big(\mathbf{L}^{-1}\mathbf{K}\big)^{k-l}\,\mathbf{L}^{-1}\,(\widehat{\boldB}\otimes\boldB) vec(\boldI_m) \nonumber\\
+2vec(\boldI_p)^T&\Big(\widehat{\boldC}\otimes \boldCr\Big) \sum \limits_{k=1}^N \sum \limits_{l=0}^{k}\Big( 
\widetilde{\mathbf{L}} ^{-1}\widetilde{\mathbf{K}}\Big)^{l}\widetilde{\mathbf{L}}^{-1}\Big(\bolde_i\bolde_i^T\otimes \boldI_r\Big) \nonumber\\
&\Big(\widetilde{\mathbf{L}}^{-1}\widetilde{\mathbf{K}}\Big)^{k-l}\,\widetilde{\mathbf{L}}^{-1}(\widehat{\boldB}\otimes\boldBr) vec(\boldI_m) 
\label{truncated_differentiate_wrt_lambda},
\end{align}
where 
$
\mathbf{L} = (-\widetilde{\mathbf{\Lambda}}\otimes \boldI_n-\boldI_r\otimes \boldA)^{-1}$,
$\mathbf{K} = \sum \limits_{j=1}^m\widehat{\boldN}_j\otimes\boldN_j$, 
$\widetilde{\mathbf{L}} =
(-\widetilde{\mathbf{\Lambda}}\otimes \boldI_r-\boldI_r\otimes \boldAr)^{-1}$, and
$\widetilde{\mathbf{K}} =\sum \limits_{j=1}^m\widehat{\boldN}_j\otimes\widetilde\boldN_j.
$
Then, the necessary condition resulting from expression (\ref{truncated_differentiate_wrt_lambda}) is
\begin{align}
%%%%%%%%%%%%%%%%%%%%%%%%%%%%%%%%%%%%%%%%%%%%%%%%%%%%%%%%%%%%%%%
vec(\boldI_p)^T\Big(\widehat{\boldC}\otimes \boldC\Big) \sum \limits_{k=1}^N \sum \limits_{l=0}^{k}\Big( \mathbf{L}^{-1}\mathbf{K}\Big)^{l}\mathbf{L}^{-1}\Big(\bolde_i\bolde_i^T\otimes \boldI_n\Big)
%\nonumber\\ &
\times\big(\mathbf{L}^{-1}\mathbf{K}\big)^{k-l}\,\mathbf{L}^{-1}\,(\widehat{\boldB}\otimes\boldB) vec(\boldI_m)   \nonumber\\
=vec(\boldI_p)^T\Big(\widehat{\boldC}\otimes \boldCr\Big) \sum \limits_{k=1}^N \sum \limits_{l=0}^{k}\Big( 
\widetilde{\mathbf{L}} ^{-1}\widetilde{\mathbf{K}}\Big)^{l}\widetilde{\mathbf{L}}^{-1}\Big(\bolde_i\bolde_i^T\otimes \boldI_r\Big) 
%\nonumber\\ &
\Big(\widetilde{\mathbf{L}}^{-1}\widetilde{\mathbf{K}}\Big)^{k-l}\,\widetilde{\mathbf{L}}^{-1}(\widehat{\boldB}\otimes\boldBr) vec(\boldI_m).
 \label{necessary_lambda}
\end{align}
By applying Cauchy's product rule to (\ref{necessary_lambda}), we obtain (\ref{lambda_Sylvester_condition}).
Simplifying these bloated expressions for the other derivatives requires exactly the same kinds of steps as in simplifying the derivative of $E_N$ with respect to the parameters in $\widehat{\boldC}$ and $\widetilde{\lambda}_i$, so we omit the derivations here.  The resulting expression for the derivative with respect to $\widehat{\boldB}_{i,j}$ yields
\begin{align}
vec(\boldI_p)^T&(\widehat{\boldC} \otimes \boldC) \sum \limits_{k=0}^N\Big( \mathbf{L}^{-1}\mathbf{K}\Big)^k\mathbf{L}^{-1}\Big(\bolde_i\bolde_j^T\otimes\boldB\Big)vec(\boldI_m) \nonumber\\
&=vec(\boldI_p)^T(\widehat{\boldC} \otimes \widetilde\boldC) \sum \limits_{k=0}^N\Big( \widetilde{\mathbf{L}}^{-1}\widetilde{\mathbf{K}}\Big)^k\mathbf{L}^{-1}\Big(\bolde_i\bolde_j^T\otimes\widetilde\boldB\Big)vec(\boldI_m),  \label{truncated_differentiate_wrt_b}
\end{align}
which then leads to (\ref{truncated_Sylvester_b_necessary}). 
Finally, the necessary condition resulting from the derivative of $\widehat{\boldN}_\ell(i,j)$, for $\ell=1,\ldots,m$ is
\begin{align}
&vec(\boldI_p)^T\Big(\widehat{\boldC}\otimes \boldC\Big) \sum \limits_{k=1}^N \sum \limits_{l=1}^{k}\big( \mathbf{L}^{-1}\mathbf{K}\big)^{l-1}\mathbf{L}^{-1}\Big(\bolde_i\bolde_j^T\otimes \boldN_\ell\Big)\nonumber
\big(\mathbf{L}^{-1}\mathbf{K}\big)^{k-l}\mathbf{L}^{-1}\Big(\widehat{\boldB}\otimes\boldB\Big) vec(\boldI_m)\nonumber\\
&=vec(\boldI_p)^T\Big(\widehat{\boldC}\otimes \widetilde\boldC\Big) \sum \limits_{k=1}^N \sum \limits_{l=1}^{k}\big( \widetilde{\mathbf{L}}^{-1}\widetilde{\mathbf{K}}\big)^{l-1}\widetilde{\mathbf{L}}^{-1}\Big(\bolde_i\bolde_j^T\otimes \widetilde\boldN_\ell\Big)\nonumber
\big(\widetilde{\mathbf{L}}^{-1}\widetilde{\mathbf{K}}\big)^{k-l}\widetilde{\mathbf{L}}^{-1}\Big(\widehat{\boldB}\otimes\widetilde\boldB\Big) vec(\boldI_m),\nonumber
%\label{truncated_differentiate_wrt_N}
\end{align}
which, then, can be written equivalently as (\ref{truncated_n_Sylvester_necessary}).
$\Box$
\end{document}